\documentclass[11pt]{article}
\usepackage{pb-diagram}
\usepackage{amsmath,amsthm,amsfonts,amssymb,latexsym}
\usepackage{amsmath}
\usepackage{hyperref}
\usepackage{a4wide}
\usepackage[all]{xy}


\vfuzz2pt 
\hfuzz2pt 

\newtheorem{thm}{Theorem}[section]
\newtheorem{cor}[thm]{Corollary}
\newtheorem{lem}[thm]{Lemma}
\newtheorem{prop}[thm]{Proposition}
\newtheorem{ex}[thm]{Example}
\newtheorem{defn}[thm]{Definition}

\newtheorem{rmk}[thm]{Remark}






\frenchspacing

\def\a{\alpha}
\def\b{\beta}

\def\m{\mu}


\begin{document}

\title{Construtions and bimodules of BiHom-alternative and BiHom-Jordan algebras}
\author{Sylvain Attan \thanks{D\'{e}partement de Math\'{e}matiques, Universit\'{e} d'Abomey-Calavi
01 BP 4521, Cotonou 01, B\'{e}nin. E.mail: syltane2010@yahoo.fr}\and Ismail Laraiedh \thanks{Departement of Mathematics, Faculty of Sciences, Sfax University, BP 1171, 3000 Sfax, Tunisia. E.mail:
Ismail.laraiedh@gmail.com and Departement of Mathematics, College of Sciences and Humanities - Kowaiyia, Shaqra University,
Kingdom of Saudi Arabia. E.mail:
ismail.laraiedh@su.edu.sa}}

\maketitle
\begin{abstract}
The aim of this paper is to give some construction results and examples of BiHom-alternative and BiHom-Jordan algebras.
Next, we define BiHom-alternative and BiHom-Jordan (bi)modules and we prove that from a given BiHom-alternative and BiHom-Jordan bimodules, a sequence of this kind of bimodules can be constructed. Some relations between Bihom-associative, BiHom-alternative and BiHom-Jordan  bimodules are given.

\end{abstract}
{\bf 2010 Mathematics Subject Classification:} 17A30, 17B10, 17C50, 17D15.

{\bf Keywords:} BiHom-alternative algebras, BiHom-Jordan algebras, BiHom-modules, Bimodules.
\section{Introduction}
As generalization of associative algebras, alternative algebras are algebras whose associator is an alternating function. Jordan algebras is another important class of algebras and were introduced in Physics context by
Pascual Jordan in 1933 to provide a new formalism for Quantum Mechanics
and developed from the algebraic viewpoint by Jacobson in \cite{Jacob3}.
 As associative algebras are related to Jordan algebras, alternative algebras are closely related to Jordan algebras \cite{fgcht, pjjnew, so, tasfdv}.
The study of bimodule (or representation) of Jordan algebras was initiated by N. Jacobson \cite{Jacob1} and the one of alternative algebras was considered by Schafer \cite{Schaf1}.

Algebras where the identities defining the structure are twisted by a homomorphism are called Hom-algebras.
They started from Hom-Lie algebras introduced and discussed in
\cite{HAR1, dlsds1, dlsds2, dlsds3,HomHopf}, motivated by quasi-deformations of Lie algebras of vector fields, in particular q-deformations of Witt and Virasoro algebras.
Hom-associative algebras were introduced in \cite{MAK3} while Hom-alternative and Hom-Jordan algebras are introduced in \cite{MAK1, YAU3}
as twisted generalizations of alternative and Jordan algebra respectively.
Dualizing  Hom-associative algebras, one can define Hom-coassociative
coalgebras, Hom-bialgebras and Hom-Hopf algebras which were introduced in
\cite{HomAlgHomCoalg,HomHopf}, see also \cite{scig,Yau:ClassicYangBaxter,Yau:HomQuantumGrp1,Yau:HomQuantumGrp2}.

 (Bi)modules over an ordinary algebra has been extended to those of
Hom-algebras in many works \cite{ YS, YAU4, YAU2}, in particular Hom-Jordan and Hom-alternative bimodules are introduced and studied in \cite{attan1, attan2}.

In \cite{scig} the authors gave a new look to Hom-type algebras from a category theoretical point of view. A generalization of this approach
led the authors of \cite{GRAZIANI} to introduce BiHom-algebras, which are algebras where the identities defining the structure are twisted by two homomorphisms $\alpha$ and $\beta$.
This class of algebras can be viewed as an extension of the class of Hom-algebras since, when the two linear maps of a BiHom-algebra are the same, it reduces to a Hom-algebra. These
algebraic structures include BiHom-associative algebras, BiHom-Lie algebras and BiHom-bialgebras. More applications of BiHom-algebras, BiHom-Lie superalgebras, BiHom-Lie colour
algebras and BiHom-Novikov algebras can be
found in \cite{elkadri, A S O, S S, Guo1, chtioui1,chtioui2, Liu1, Liu2}.

The aim of this paper is to extend the work done in \cite{attan1} in BiHom-algebras setting. More precisely, we introduce
BiHom-alternative bimodules  and BiHom-Jordan bimodules  and  discuss about some findings. The paper is
organized as follows. Section 2 contains some necessary important basic notions and notations related to BiHom-algebras, BiHom-alternative algebras, BiHom-Jordan algebras
and modules over BiHom-associative algebras.
Section 3 presents some useful methods for constructions of BiHom-alternative algebras and BiHom-Jordan algebras. In section 4,
we give definitions and some properties of BiHom-alternative and BiHom-Jordan (bi)modules and we prove that from a given
BiHom-alternative algebra and BiHom-Jordan algebra bimodules, a sequence of this kind of bimodules can be constructed.

Throughout this paper $\mathbb{K}$ is an algebraically closed field of characteristic $0$ and $A$ is a $\mathbb{K}$-vector space.
\section{Preliminaries}
This section contains necessary important basic notions and notations which will be used in next sections.
For the map $\mu: A^{\otimes 2}\longrightarrow A,$ we will sometimes $\mu(a\otimes b)$ as $\mu(a,b)$ or $ab$  for $a,b\in A$ and if $V$ is another vector space, $\tau_1: A\otimes V\longrightarrow V\otimes A$ (resp. $\tau_2: V\otimes A\longrightarrow A\otimes V$) denote the twist isomorphism $\tau_1(a\otimes v)=v\otimes a$ (resp.
$\tau_2(v\otimes a)=a\otimes v$).
\begin{defn}
A BiHom-module is a pair $(M,\alpha_M,\beta_M)$ consisting of a $\mathbb{K}$-module $M$ and
a linear self-maps $\alpha_M, \beta_M: M\longrightarrow M$  such that $\alpha_M\beta_M=\beta_M\alpha_M.$ A morphism
$f: (M,\alpha_M, \beta_M)\rightarrow (N,\alpha_N,\beta_N)$ of BiHom-modules is a linear map
 $f: M\longrightarrow N$ such that $f\alpha_M=\alpha_N f$ and
 $f\beta_M=\beta_N f.$
\end{defn}
\begin{defn}\cite{GRAZIANI}
A BiHom-algebra is a quadruple $(A,\mu,\alpha,\beta)$ in which $(A,\alpha,\beta)$ is a BiHom-module, $\mu : A^{\otimes 2} \rightarrow A$ is a linear map.
The BiHom-algebra $(A,\mu,\alpha;\beta)$ is said to be  multiplicative if $\alpha\circ\mu=\mu\circ\alpha^{\otimes 2}$ and $\beta\circ\mu=\mu\circ\beta^{\otimes 2}$ (multiplicativity).
\end{defn}

\begin{defn}Let $(A,\mu,\alpha,\beta)$ be a BiHom-algebra. Then
\begin{enumerate}
\item
A BiHom-subalgebra of $(A,\mu,\alpha,\beta)$ is a linear subspace $H$ of $A$, which is closed for the multiplication $\mu$ and invariant by $\alpha$ and $\beta$, that is, $\mu(x,y)\in H,~\alpha(x)\in H$ and $\beta(x)\in H$ for all $x,y\in H$. If furthermore $\mu(a,b)\in H$ and $\mu(b,a)\in H$ for all $(a,b)\in A\times H,$ then $H$ is called a two-sided BiHom ideal of $A$.
  \item $(A,\mu,\alpha,\beta)$ is said to be regular if $\alpha$ and $\beta$ are algebra automorphisms.
  \item $(A,\mu,\alpha,\beta)$ is said to be involutive if $\alpha$ and $\beta$ are two involutions,  that is $\alpha^2=\beta^2=id$.
\end{enumerate}
\end{defn}
\begin{defn}
Let $(A,\mu,\alpha,\beta)$ and $(A^{'},\mu^{'},\alpha^{'},\beta^{'})$ be two BiHom- algebras. Then
a homomorphism $f:A\longrightarrow A^{'}$ is said to be a BiHom-algebra morphism  if the following conditions hold:
$$\begin{array}{llll}&&f  \circ \mu= \mu^{'} \circ(f \otimes f),\\&&f\circ \alpha=\alpha^{'} \circ f,\\&&f\circ \beta=\beta^{'}\circ  f.\end{array} $$
\end{defn}
Denote by $\Gamma_{f}=\{x+f(x);~~x\in A\}\subset A\oplus A'$  the graph of a linear map $f:A\longrightarrow A^{'}$.
\begin{defn} \cite{Liu1} Let $(A, \mu, \a,\b)$ be a BiHom-algebra and let $\lambda\in \mathbb{K}$. Let $R: A\rightarrow A$ be a linear map satisfying
\begin{eqnarray*}
\mu(R(x), R(y))=R(\mu(R(x), y)+\mu(x, R(y))+\lambda \mu(x, y)), ~~~\mbox{for any $x,y\in A$}.
\end{eqnarray*}
Then $R$ is called  a Rota-Baxter operator of weight $\lambda$ and $(A, \mu, \a,\b, R)$ is called a Rota-Baxter
BiHom-algebra of weight $\lambda$.
\end{defn}
\begin{defn}\cite{GRAZIANI}
Let $(A,\mu,\alpha,\beta)$ be a BiHom-algebra.
A BiHom-associator of $A$  is the trilinear map $as_{\alpha,\beta}:A^{\otimes 3} \longrightarrow A$ defined by
\begin{equation}\label{BiHomAss}
    as_{\alpha,\beta}=\mu \circ (\mu\otimes \beta- \alpha \otimes \mu).
\end{equation}
In terms of elements, the map $as_{\alpha,\beta}$ is given by
$$ as_{\alpha,\beta}(x,y,z)=\mu(\mu(x,y),\beta(z))-\mu(\alpha(x),\mu(y,z)),\ \forall x,y,z \in A.$$
\end{defn}
Note that if  $\alpha=\beta=id $, then the BiHom-associator coincide with the usual associator denoted by $as(,,).$
\begin{defn}
A BiHom-associative algebra \cite{GRAZIANI} is a multiplicative Bihom-algebra $(A,\mu,\alpha,\beta)$
satisfying the following BiHom-associativity condition:
\begin{equation}\label{BiHom ass identity}
    as_{\alpha,\beta}(x,y,z)=0,\ \text{for all}\ x,y,z \in A.\
\end{equation}
\end{defn}

Clearly, a Hom-associative algebra $(A,\mu,\alpha)$ can be regarded as a BiHom-associative
algebra $(A,\mu,\alpha,\alpha)$.

\vspace{0.5cm}
 An important class of BiHom-algebras that is considered here is the one of BiHom-alternative algebras.
These algebras have been introduced By Chtioui et al in \cite{chtioui1}.
\begin{defn}
A left BiHom-alternative algebra $($resp. right BiHom-alternative algebra$)$ is a multiplicative Bihom-algebra $(A,\mu,\alpha,\beta)$  satisfying the left BiHom-alternative identity,
\begin{equation}\label{sa1}
as_{\alpha,\beta}(\beta(x),\alpha(y),z)+as_{\alpha,\beta}(\beta(y),\alpha(x),z)=0,
\end{equation}
respectively, the right BiHom-alternative identity,
\begin{equation}\label{sa2}
as_{\alpha,\beta}(x,\beta(y),\alpha(z))+as_{\alpha,\beta}(x,\beta(z),\alpha(y))=0,
\end{equation}
 for all $x,y,z \in A$.
A BiHom-alternative algebra is the one which is both a left and right BiHom-alternative algebra.
\end{defn}
\begin{rmk}
Any BiHom-associative algebra is a BiHom-alternative algebra.
\end{rmk}

\begin{lem}\cite{chtioui1} Let $(A,\mu,\alpha,\beta)$ be a multiplicative BiHom-algebra. Then
\begin{enumerate}
  \item  $A$ is a left BiHom-alternative algebra if and only if
\begin{equation}\label{is1}as_{\alpha,\beta}(\beta(x),\alpha(x),y)=0 \mbox{ for all $x,y \in A$} \end{equation}
  \item $A$ is a right BiHom-alternative algebra if and only if
\begin{equation}\label{is2}as_{\alpha,\beta}(x,\beta(y),\alpha(y))=0  \mbox{ for all $x,y \in A$} \end{equation}
\end{enumerate}
\end{lem}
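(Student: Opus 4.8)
The plan is to prove both equivalences by the standard polarization (linearization) trick, using only that $\mathbb{K}$ has characteristic $0$ and that the BiHom-associator $as_{\alpha,\beta}$ is trilinear (which is clear from its definition \eqref{BiHomAss}, since $\mu$, $\alpha$, $\beta$ are linear).

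For item (1), the implication ``$\Rightarrow$'' is immediate: assuming the left BiHom-alternative identity \eqref{sa1}, substitute $y:=x$ to obtain $2\,as_{\alpha,\beta}(\beta(x),\alpha(x),y)=0$, and divide by $2$, which gives \eqref{is1}. For ``$\Leftarrow$'', fix $y\in A$ and consider the map $B\colon A\times A\to A$ defined by $B(u,v)=as_{\alpha,\beta}(\beta(u),\alpha(v),y)$. Since $u\mapsto\beta(u)$ and $v\mapsto\alpha(v)$ are linear and $as_{\alpha,\beta}$ is linear in each of its first two arguments, $B$ is bilinear. By hypothesis \eqref{is1} we have $B(x,x)=0$ for all $x$, so expanding
\begin{equation*}
0=B(x+z,x+z)=B(x,x)+B(x,z)+B(z,x)+B(z,z)
\end{equation*}
yields $B(x,z)+B(z,x)=0$ for all $x,z\in A$, which (after renaming $z$ as $y$ and the fixed $y$ as $z$) is precisely \eqref{sa1}. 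Hence $A$ is left BiHom-alternative.

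Item (2) is entirely parallel. For ``$\Rightarrow$'', set $z:=y$ in the right BiHom-alternative identity \eqref{sa2} and divide by $2$ to get \eqref{is2}. For ``$\Leftarrow$'', fix $x\in A$ and polarize the bilinear map $(u,v)\mapsto as_{\alpha,\beta}(x,\beta(u),\alpha(v))$ around the diagonal $u=v$, exactly as above, to recover \eqref{sa2}.

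There is no genuine obstacle here; the only point deserving a word of care is that the twisting maps $\alpha,\beta$ are applied to the very slots in which one linearizes, so one must check—trivially—that the auxiliary maps $B$ remain bilinear, which is what makes the diagonal vanishing propagate to the full (symmetrized) identity. Note also that multiplicativity of $(A,\mu,\alpha,\beta)$ is not used anywhere in this argument.
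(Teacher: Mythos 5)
Your polarization argument is correct: the forward direction follows by setting the two linearized variables equal and dividing by $2$ (possible since $\mathrm{char}\,\mathbb{K}=0$), and the converse follows by expanding $B(x+z,x+z)=0$ for the bilinear map obtained by fixing the remaining slot, exactly as you write; your remark that multiplicativity plays no role in the equivalence itself is also accurate. The paper leaves this verification to the reader as a ``direct computation,'' and your linearization is precisely that standard computation, so the approach coincides with the intended one.
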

\begin{proof}
 Follows by a direct computation that is left to the reader.
\end{proof}
Observe that when $\alpha=\beta=id$ the left BiHom-alternative identity $(\ref{sa1})$ (resp.  right BiHom-alternative identity $(\ref{sa2}$))
reduces to  left alternative identity (resp.  right alternative identity).

\vspace{0.5cm}
As BiHom-alternative algebras, BiHom-Jordan algebras are fundamental objects of this paper. They appear as cousins
of BiHom-alternative algebras and these two BiHom-algebras are related as (Hom-)Jordan and (Hom-)alternative algebras.
\begin{defn}\label{BiHom jordan}\cite{chtioui1}
A BiHom algebra $(A,\mu,\alpha,\beta)$ is called a BiHom-Jordan algebra if:
\begin{enumerate}
  \item $\mu \Big(\beta(x),\alpha(y)\Big)=\mu\Big(\beta(y),\alpha(x)\Big),\ \forall x,y \in A$ (BiHom-commutativity condition),
  \item $as_{\alpha,\beta}\Big(\mu\big(\beta^2(x),\alpha\beta(x)\big),\alpha^2\beta(y),\alpha^3(x)\Big)=0,\ \forall x,y \in A$(BiHom-Jordan identity).
\end{enumerate}
\end{defn}
Note that if $\alpha=\beta=id$, we obtain a Jordan algebra. Then we conclude that if $(A,\mu)$ is a Jordan algebra, $(A,\mu,id,id)$ can be viewed as a BiHom-Jordan algebra.
\begin{rmk}\cite{chtioui1}
The BiHom-Jordan identity can be writen as:  $$\circlearrowleft_{x,w,z}as_{\alpha,\beta}\Big(\mu\big(\beta^2(x),\alpha\beta(w)\big),\alpha^2\beta(y),\alpha^3(z)\Big)=0,
~\forall x,y,z,w\in A,$$
where $\circlearrowleft_{x,w,z}$ denotes the summation over the cyclic permutation on $x,w,z.$
\end{rmk}
\begin{defn} Let $(A,\mu,\alpha,\beta)$ be any BiHom-algebra.
\begin{enumerate}
\item A BiHom-module $(V,\phi,\psi)$ is called an $A$-bimodule if it comes equipped with a left and a right structures maps on $V$ that is morphisms
$\rho_{l}: (A\otimes V, \alpha\otimes\phi,\beta\otimes\psi)\rightarrow (V,\phi,\psi),~a\otimes v\mapsto a.v$ and $\rho_{r}:(V\otimes A,\phi\otimes\alpha,\psi\otimes\psi)\rightarrow (V,\phi,\psi),~v\otimes a\mapsto v.a$ of Bihom-modules.
\item
A morphism $f:(V,\phi,\psi,\rho_{l},\rho_{r})\rightarrow (W,\phi',\psi',\rho_{l}',\rho_{r}')$ of $A$-bimodules is a morphism of the underlying BiHom-modules such that
$$f\circ\rho_{l}=\rho_{l}'\circ(Id_{A}\otimes f)~~and~~f\circ\rho_{r}=\rho_{r}'\circ( f \otimes Id_{A}).$$
That yields the commutative diagrams
  $$
\xymatrix{
 A\otimes V \ar[d]_{  Id_{A}\otimes f}\ar[rr]^{\rho_l}
                && V  \ar[d]^{f}  \\
 A\otimes W \ar[rr]^{\rho_l'}
                && W}\quad\xymatrix{
 V\otimes A \ar[d]_{ f\otimes Id_{A} }\ar[rr]^{\rho_r}
                && V  \ar[d]^{f}  \\
 W\otimes A \ar[rr]^{\rho_r'}
                && W  }
$$
\item Let $(V,\phi,\psi)$ be an $A$-bimodule with structure maps $\rho_l$ and $\rho_r$. Then the module BiHom-associator of $V$ is a trilinear map $as_{V_{\phi,\psi}}$ defined as:
\begin{eqnarray}
as_{V_{\phi,\psi}}\circ Id_{V\otimes A\otimes A}=\rho_r\circ(\rho_r\otimes\beta)-
\rho_r\circ(\phi\otimes\mu)\nonumber\\
as_{V_{\phi,\psi}}\circ Id_{A\otimes V\otimes A}=\rho_r\circ(\rho_l\otimes\beta)-
\rho_l\circ(\alpha\otimes\rho_r)\nonumber\\
as_{V_{\phi,\psi}}\circ Id_{A\otimes A\otimes V}=\rho_l\circ(\mu\otimes\psi)-
\rho_l\circ(\alpha\otimes\rho_l)~\nonumber
\end{eqnarray}
\end{enumerate}
\end{defn}
\begin{rmk}
\begin{enumerate}
\item If $\alpha=\beta=Id_A$ and $\phi=\psi=Id_V$, then $as_{V_{\phi,\psi}}$ is denoted by $as_V.$
\item If $\beta=\alpha$ and $\psi=\phi$ then, the module BiHom-associator $as_{V_{\phi,\psi}}$ reduces to the module Hom-associator $as_{A,V}$ given in \cite{attan1}.
\end{enumerate}
\end{rmk}
Now, let consider the following notions for BiHom-associative algebras.
\begin{defn}\cite{GRAZIANI}
Let $(A,\mu,\alpha,\beta)$ be a BiHom-associative algebra and $(V, \phi,\psi)$ be a BiHom-module. Then
\begin{enumerate}
\item A left BiHom-associative $A$-module structure on $V$ consists of a morphism $\rho_{l}: A\otimes V\longrightarrow V$ of BiHom-modules, such that
\begin{eqnarray}\label{00}
\rho_{l}\circ(\alpha\otimes\rho_{l})=\rho_{l}\circ(\mu\otimes\psi).\label{LeftAssMod}
\end{eqnarray}
In terms of diagram, we have
  $$
\xymatrix{
 A\otimes A\otimes V \ar[d]_{ \mu\otimes\psi}\ar[rr]^{\alpha\otimes\rho_{l}}
                && A\otimes V  \ar[d]^{\rho_{l}}  \\
 A\otimes V \ar[rr]^{\rho_{l}}
                && V  }$$
\item A right BiHom-associative $A$-module structure on $V$ consists of a morphism $\rho_{r}: V\otimes A\longrightarrow V$ of BiHom-modules, such that
\begin{eqnarray}\label{11}
\rho_{r}\circ(\phi\otimes\mu)=\rho_{r}\circ(\rho_{r}\otimes\beta).\label{RightAssMod}
\end{eqnarray}
In terms of diagram, we have
 $$
\xymatrix{
V\otimes A\otimes A  \ar[d]_{ \phi\otimes\mu}\ar[rr]^{\rho_{r}\otimes\beta}
                && V\otimes A  \ar[d]^{\rho_{r}}  \\
 V\otimes A \ar[rr]^{\rho_{r}}
                && V  }$$
\item A BiHom-associative $A$-bimodule structure on $V$ consists of two structure maps $\rho_l: A\otimes V\longrightarrow V$ and $\rho_r: V\otimes A\longrightarrow V$
such that $(V, \phi,\psi,\rho_l)$ is
a left BiHom-associative $A$-module, $(V, \phi,\psi,\rho_r)$ is a right BiHom-associative $A$-module and that the following BiHom-associativity condition holds:
\begin{eqnarray}
\rho_l\circ(\alpha\otimes\rho_r)=\rho_r\circ(\rho_l\otimes\beta).\label{Assoc1}
\end{eqnarray}
In terms of diagram, we have
 $$
\xymatrix{
A\otimes V\otimes A  \ar[d]_{ \alpha\otimes\rho_{r}}\ar[rr]^{\rho_{l}\otimes\beta}
                && V\otimes A  \ar[d]^{\rho_{r}}  \\
A\otimes V \ar[rr]^{\rho_{l}}
                && V  }$$
\end{enumerate}
\end{defn}
\section{ Constructions of BiHom-alternative and  BiHom-Jordan algebras}
In this section, we provide
some constructions results of BiHom-alternative algebras and BiHom-Jordan algebras. Some examples are also given.


\subsection{Constructions of BiHom-alternative algebras}

\begin{prop}
Let $(A,\mu,\alpha,\beta)$ be a BiHom-alternative algebra and $I$ be a two-sided BiHom-ideal of $(A,\mu,\alpha,\beta)$.
Then $(A/I,\overline{\mu},\overline{\alpha},\overline{\beta})$ is a BiHom-alternative algebra where $\overline{\mu}(\overline{x},\overline{y})=\overline{\mu(x,y)}$,
$\overline{\alpha}(\overline{x})=\overline{\alpha(x)}$  and $\overline{\beta}(\overline{x})=\overline{\beta(x)}$, for all $\overline{x},\overline{y}\in A/I$
\end{prop}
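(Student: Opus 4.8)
The plan is to first check that the quotient data $(\overline{\mu},\overline{\alpha},\overline{\beta})$ is well defined and makes $A/I$ into a multiplicative BiHom-algebra, and then to reduce the two BiHom-alternative identities on $A/I$ to the ones already known on $A$, via the observation that the BiHom-associator passes to the quotient.

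First I would verify that $\overline{\alpha}$, $\overline{\beta}$ and $\overline{\mu}$ are well defined on $A/I$. Since $I$ is a two-sided BiHom-ideal it is in particular a BiHom-subalgebra, hence $\alpha(I)\subseteq I$ and $\beta(I)\subseteq I$, so $\overline{\alpha}(\overline{x})=\overline{\alpha(x)}$ and $\overline{\beta}(\overline{x})=\overline{\beta(x)}$ do not depend on the representative $x$. For $\overline{\mu}$ one replaces $x$ by $x+i$ and $y$ by $y+j$ with $i,j\in I$ and uses $\mu(I,A)\subseteq I$ together with $\mu(A,I)\subseteq I$ to see that $\mu(x+i,y+j)-\mu(x,y)\in I$; this is the one place where both sides of the ideal condition are genuinely needed. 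The relation $\overline{\alpha}\,\overline{\beta}=\overline{\beta}\,\overline{\alpha}$ and the multiplicativity conditions $\overline{\alpha}\circ\overline{\mu}=\overline{\mu}\circ\overline{\alpha}^{\otimes 2}$, $\overline{\beta}\circ\overline{\mu}=\overline{\mu}\circ\overline{\beta}^{\otimes 2}$ are then obtained immediately by applying the corresponding identities of $(A,\mu,\alpha,\beta)$ to representatives and pushing forward to cosets. Thus $(A/I,\overline{\mu},\overline{\alpha},\overline{\beta})$ is a multiplicative BiHom-algebra.

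The key step is the formula
$$ as_{\overline{\alpha},\overline{\beta}}(\overline{x},\overline{y},\overline{z})=\overline{as_{\alpha,\beta}(x,y,z)},\qquad \forall\, x,y,z\in A, $$
which follows directly from expanding $as_{\overline{\alpha},\overline{\beta}}=\overline{\mu}\circ(\overline{\mu}\otimes\overline{\beta}-\overline{\alpha}\otimes\overline{\mu})$ using the definitions of $\overline{\mu},\overline{\alpha},\overline{\beta}$. Granting this, I would invoke the preceding Lemma: to prove that $A/I$ is left (resp. right) BiHom-alternative it is enough to check $(\ref{is1})$ and $(\ref{is2})$ for $A/I$. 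But for any $x,y\in A$,
$$ as_{\overline{\alpha},\overline{\beta}}\big(\overline{\beta}(\overline{x}),\overline{\alpha}(\overline{x}),\overline{y}\big)=\overline{as_{\alpha,\beta}(\beta(x),\alpha(x),y)}=\overline{0}, $$
since $A$ itself is left BiHom-alternative, and symmetrically $as_{\overline{\alpha},\overline{\beta}}\big(\overline{x},\overline{\beta}(\overline{y}),\overline{\alpha}(\overline{y})\big)=\overline{as_{\alpha,\beta}(x,\beta(y),\alpha(y))}=\overline{0}$ because $A$ is right BiHom-alternative. Hence $A/I$ is both left and right BiHom-alternative, i.e. a BiHom-alternative algebra.

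I do not expect a substantive obstacle: the only genuinely careful point is the well-definedness of $\overline{\mu}$ (needing the two-sidedness of $I$), and after that everything is a routine transfer of identities from $A$ to $A/I$ made transparent by the associator formula above. One could equally verify the original identities $(\ref{sa1})$ and $(\ref{sa2})$ on $A/I$ directly by the same push-forward argument, but using the Lemma keeps the computation minimal.
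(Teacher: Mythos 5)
Your proposal is correct and follows essentially the same route as the paper: after noting multiplicativity (and, more carefully than the paper, well-definedness of $\overline{\mu},\overline{\alpha},\overline{\beta}$), you reduce the identities (\ref{is1}) and (\ref{is2}) in $A/I$ to the corresponding ones in $A$ via the push-forward of the BiHom-associator, which is exactly the paper's computation.
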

\begin{proof}
First, note that the multiplicativity of $\overline{\alpha}$ and $\overline{\beta}$  with respect to $\overline{\mu}$, follows from the one of $\alpha$ and $\beta$ with respect to $\mu$.
Next, pick $\overline{x},\overline{y}\in A/I$. Then the left and right BiHom-alternative identity in $(A/I,\overline{\mu},\overline{\alpha},\overline{\beta})$ are proved as follows
$$\begin{array}{llllllll}as_{\overline{\alpha},\overline{\beta}}(\overline{\beta}(\overline{x}),\overline{\alpha}(\overline{x}),\overline{y})&=&\overline{\mu}(\overline{\mu}(\overline{\beta}(\overline{x}),\overline{\alpha}(\overline{x})),\overline{\beta}(\overline{y}))
-\overline{\mu}(\overline{\alpha}\overline{\beta}(\overline{x}),\overline{\mu}(\overline{\alpha}(\overline{x}),\overline{y}))\\
&=&\overline{\mu(\mu(\beta(x),\alpha(x)),\beta(y))-\mu(\alpha\beta(x),\mu(\alpha(x),y))}\\
&=&\overline{as_{\alpha,\beta}(\beta(x),\alpha(x),y)}=\overline{0} ~~(\textsl{by ~(\ref{is1})~ in}~ A)
\end{array}$$
$$\begin{array}{llllllll}as_{\overline{\alpha},\overline{\beta}}(\overline{x},\overline{\beta}(\overline{y}),\overline{\alpha}(\overline{y}))&=&\overline{\mu}(\overline{\mu}(\overline{x},\overline{\beta}(\overline{y})),\overline{\alpha}\overline{\beta}(\overline{y}))
-\overline{\mu}(\overline{\alpha}(\overline{x}),\overline{\mu}(\overline{\beta}(\overline{y}),\overline{\alpha}(\overline{y})))\\
&=&\overline{\mu(\mu(x,\beta(y)),\alpha\beta(y))-\mu(\alpha(x),\mu(\beta(y),\alpha(y)))}\\
&=&\overline{as_{\alpha,\beta}(x,\beta(y),\alpha(y))}=\overline{0} ~~(\textsl{by ~(\ref{is2})~ in}~ A).
\end{array}$$
Then $(A/I,\overline{\mu},\overline{\alpha},\overline{\beta})$ is a BiHom-alternative algebra.
\end{proof}
\begin{prop}\label{bylv}
 Given two BiHom-alternative algebras $(A,\mu_A,\alpha_{A},\beta_{A})$ and $(B,\mu_B,\alpha_{B},\beta_{B}),$ there is a BiHom-alternative algebra $(A\oplus B, \mu_{A\oplus B}, \alpha_{A}+\alpha_{B},\beta_{A}+\beta_{B}),$ where the bilinear map $\mu_{A\oplus B}:(A\oplus B)^{\times 2}\longrightarrow (A\oplus B)$ is given by
 $$\begin{array}{llll}
 \mu_{A\oplus B}(a_1+b_1,a_2+b_2)=\mu_A(a_1,a_2)+\mu_B(b_1,b_2), \forall \ a_1,a_2\in A,\ \forall \ b_1,b_2\in B,\nonumber
 \end{array}$$
 and the linear  maps $\beta=\beta_{A}+\beta_{B},~\alpha=\alpha_{A}+\alpha_{B}: (A\oplus B)\longrightarrow (A\oplus B)$ are given by
$$ \begin{array}{lll}
(\alpha_{A}+\alpha_{B})(a+b)&=& \alpha_{A}(a)+\alpha_{B}(b),\\
(\beta_{A}+\beta_{B})(a+b)&=& \beta_{A}(a)+\beta_{B}(b),~~ \forall \ (a,b)\in A\times B.
 \end{array}$$
 \end{prop}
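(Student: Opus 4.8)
The plan is to verify directly that the quadruple $(A\oplus B,\mu_{A\oplus B},\alpha,\beta)$, with $\alpha=\alpha_A+\alpha_B$ and $\beta=\beta_A+\beta_B$, satisfies all the defining axioms of a BiHom-alternative algebra, reducing everything to the corresponding identities already known to hold in $A$ and in $B$. First I would record the elementary structural facts: that $\alpha\beta=\beta\alpha$ on $A\oplus B$ (immediate from $\alpha_A\beta_A=\beta_A\alpha_A$ and $\alpha_B\beta_B=\beta_B\alpha_B$ componentwise), so that $(A\oplus B,\alpha,\beta)$ is a BiHom-module, and that $\mu_{A\oplus B}$ is bilinear. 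Then I would check multiplicativity: for $x=a_1+b_1$, $y=a_2+b_2$,
\[
\alpha\big(\mu_{A\oplus B}(x,y)\big)=\alpha_A(\mu_A(a_1,a_2))+\alpha_B(\mu_B(b_1,b_2))=\mu_{A\oplus B}(\alpha(x),\alpha(y)),
\]
using multiplicativity of $\alpha_A$ and $\alpha_B$, and likewise for $\beta$.

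The key computational step is to observe that the BiHom-associator on the direct sum splits as a direct sum of the two component associators. Concretely, for $x=a_1+b_1$, $y=a_2+b_2$, $z=a_3+b_3$ in $A\oplus B$, a short calculation from the definition $as_{\alpha,\beta}=\mu\circ(\mu\otimes\beta-\alpha\otimes\mu)$ gives
\[
as_{\alpha,\beta}(x,y,z)=as_{\alpha_A,\beta_A}(a_1,a_2,a_3)+as_{\alpha_B,\beta_B}(b_1,b_2,b_3),
\]
because $\mu_{A\oplus B}$ has no cross terms and $\alpha,\beta$ act componentwise, so every term of the associator on $A\oplus B$ is the sum of the corresponding term on $A$ and on $B$. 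Once this additivity is in hand, I would use the characterization of Lemma (\ref{is1})–(\ref{is2}): to check the left BiHom-alternative identity it suffices to show $as_{\alpha,\beta}(\beta(x),\alpha(x),y)=0$ for all $x,y\in A\oplus B$. Writing $x=a+b$, $y=a'+b'$ and applying the additivity formula, this equals $as_{\alpha_A,\beta_A}(\beta_A(a),\alpha_A(a),a')+as_{\alpha_B,\beta_B}(\beta_B(b),\alpha_B(b),b')$, which is $0+0=0$ since $A$ and $B$ are left BiHom-alternative (again invoking (\ref{is1})). The right BiHom-alternative identity is verified identically using (\ref{is2}), evaluating $as_{\alpha,\beta}(x,\beta(y),\alpha(y))$ on $x=a+b$, $y=a'+b'$.

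There is no serious obstacle here; the proof is a routine verification, and the only point requiring a little care is the bookkeeping in the associator-additivity identity, where one must confirm that the mixed terms genuinely vanish — this is precisely where the hypothesis that $\mu_{A\oplus B}$ restricts to $\mu_A$ on $A$, to $\mu_B$ on $B$, and kills all products mixing the two summands, gets used. I would state the additivity of $as$ as a displayed auxiliary identity, prove it in one line by expanding both defining terms, and then conclude by citing Lemma (\ref{is1}) and (\ref{is2}) componentwise. Finally I would remark that multiplicativity together with these two identities is exactly the definition of a BiHom-alternative algebra, completing the proof.
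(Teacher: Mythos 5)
Your proposal is correct and follows essentially the same route as the paper: check commutativity of $\alpha$ and $\beta$, multiplicativity of $\alpha$ and $\beta$ with respect to $\mu_{A\oplus B}$, and then reduce the left and right BiHom-alternative identities (via the criterion (\ref{is1})--(\ref{is2})) to the componentwise associators in $A$ and $B$, which vanish by hypothesis. The only cosmetic difference is that you isolate the additivity of the BiHom-associator as a stated auxiliary identity, whereas the paper expands it in place for the specific triples; the content is identical.
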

\begin{proof}
Since $\alpha_{A}\circ\beta_{A}=\beta_{A}\circ\alpha_{A}$ and $\alpha_{B}\circ\beta_{B}=\beta_{B}\circ\alpha_{B}$, we have
$(\alpha_{A}+\alpha_{B})\circ(\beta_{A}\circ\beta_{B})=(\beta_{A}\circ\beta_{B})\circ(\alpha_{A}+\alpha_{B})$.

First, $(\alpha_{A}+\alpha_{B})$ is multiplicative with respect to $\mu_{A\oplus B}.$ Indeed, $\forall (a_{1},b_{1}),~(a_{2},b_{2})\in A\times B$
$$\begin{array}{lllllll}
&&(\alpha_{A}+\alpha_{B})\circ\mu_{A\oplus B}(a_1+b_1,a_2+b_2)\\&=&(\alpha_{A}+\alpha_{B})(\mu_A(a_1,a_2)+\mu_B(b_1,b_2))\\
&=&\alpha_{A}\circ\mu_A(a_1,a_2)+\alpha_{B}\circ\mu_B(b_1,b_2)\\
&=&\mu_A(\alpha_{A}(a_1),\alpha_{A}(a_2))+\mu_B(\alpha_{B}(b_1),\alpha_{B}(b_2)\\
&=&\mu_{A\oplus B}(\alpha_{A}(a_1)+\alpha_{B}(b_1),\alpha_{A}(a_2)+\alpha_{B}(b_2))\\&=&
\mu_{A\oplus B}((\alpha_{A}+\alpha_{B})(a_1+b_1),(\alpha_{A}+\alpha_{B})(a_2+b_2)).
\end{array}$$
Similarly, we have $(\beta_{A}+\beta_{B})\circ\mu_{A\oplus B}(a_1+b_1,a_2+b_2)=\mu_{A\oplus B}((\beta_{A}+\beta_{B})(a_1+b_1),(\beta_{A}+\beta_{B})(a_2+b_2))$.

Secondly, we prove the left BiHom-alternative identity for $A\oplus B$ as follows
$$\begin{array}{llllll}
 &&as_{\alpha,\beta}((\beta_{A}+\beta_{B})(a_1+b_1),(\alpha_{A}+\alpha_{B})(a_1+b_1), a_2+b_2)\nonumber\\
 &=&\mu_{A\oplus B}(\mu_{A\oplus B}((\beta_{A}+\beta_{B})(a_1+b_1),(\alpha_{A}+\alpha_{B})(a_1+b_1)),(\beta_{A}+\beta_{B})(a_2+b_2))\nonumber\\
&&-\mu_{A\oplus B}((\alpha_{A}+\alpha_{B})(\beta_{A}+\beta_{B})(a_1+b_1), \mu_{A\oplus B}((\alpha_{A}+\alpha_{B})(a_1+b_1),a_2+b_2))\nonumber\\
 &=&\mu_{A\oplus B}(\mu_{A\oplus B}(\beta_{A}(a_1)+\beta_{B}(b_1),\alpha_{A}(a_1)+\alpha_{B}(b_1)),\beta_{A}(a_2)+\beta_{B}(b_2))\nonumber\\
&&-\mu_{A\oplus B}(\alpha_{A}\beta_{A}(a_1)+\beta_{B}\beta_{B}(b_1), \mu_{A\oplus B}(\alpha_{A}(a_1)+\alpha_{B}(b_1),a_2+b_2))\nonumber\\
&=&\mu_{A\oplus B}(\mu_A(\beta_{A}(a_1)+\alpha_{A}(a_1))+\mu_B(\beta_{B}(b_1),\alpha_{B}(b_1)),(\beta_{A}(a_2)+\beta_{B}(b_2))
\nonumber\\
&&-\mu_{A\oplus B}(\alpha_{A}\beta_{A}(a_1)+\alpha_{B}\beta_{B}(b_1),\mu_A(\alpha_{A}(a_1),a_2)+\mu_{B}(\alpha_{B}(b_1),b_2))
\nonumber\\
&=&\mu_A(\mu_A(\beta_{A}(a_1),\alpha_{A}(a_1)),\beta_{A}(a_2))+\mu_B(\mu_B(\beta_{B}(b_1),\alpha_{B}(b_1)),\beta_{B}(b_2))\nonumber\\
&&-\mu_A(\alpha_{A}\beta_{A}(a_1),\mu_A(\alpha_{A}(a_1),a_2))-\mu_B(\alpha_{B}\beta_{B}(b_1),\mu_B(\alpha_{B}(b_1),b_2))
\nonumber\\
&=&as_{\alpha_A,\beta_A}(\beta_{A}(a_1),\alpha_{A}(a_1),a_2)+as_{\alpha_B,\beta_B}(\beta_{B}(b_1),\alpha_{B}(b_1),b_2)=0+0=0 ~~(\textsl{by ~(\ref{is1})~ in}~ A~and~B).\nonumber
\end{array}$$
Similarly, we prove the right BiHom-alternative identity as follows:
$$\begin{array}{llll}
 &&as_{\alpha,\beta}(a_1+b_1,(\beta_{A}+\beta_{B})(a_2+b_2), (\alpha_{A}+\alpha_{B})(a_2+b_2))\\
 &=&\mu_{A\oplus B}(\mu_{A\oplus B}(a_{1}+b_{1},(\beta_{A}+\beta_{B})(a_2+b_2)),(\alpha_{A}+\alpha_{B})(\beta_{A}+\beta_{B})(a_2+b_2))\nonumber\\
&&-\mu_{A\oplus B}((\alpha_{A}+\alpha_{B})(a_1+b_1), \mu_{A\oplus B}((\beta_{A}+\beta_{B})(a_2+b_2),(\alpha_{A}+\alpha_{B})(a_2+b_2)))\nonumber\\
 &=&\mu_{A\oplus B}(\mu_{A\oplus B}(a_1+b_1,\beta_{A}(a_2)+\beta_{B}(b_2))),\alpha_{A}\beta_{A}(a_2)+\alpha_{B}\beta_{B}(b_2))\nonumber\\
&&-\mu_{A\oplus B}(\alpha_{A}(a_1)+\alpha_{B}(b_1), \mu_{A\oplus B}(\beta_{A}(a_2)+\beta_{B}(b_2),\alpha_{A}(a_2)+\alpha_{B}(b_2)))\nonumber\\
&=&\mu_{A\oplus B}(\mu_A(a_1,\beta_{A}(a_2))+\mu_B(b_1,\beta_{B}(b_2)),\alpha_{A}\beta_{A}(a_2)+\alpha_{B}\beta_{B}(b_2))
\nonumber\\
&&-\mu_{A\oplus B}(\alpha_{A}(a_1)+\alpha_{B}(b_1),\mu_A(\beta_{A}(a_1),\alpha_{A}(a_2))+\mu_{B}(\beta_{B}(b_2),\alpha_{B}(b_2)))
\nonumber\\
&=&\mu_A(\mu_A(a_1,\beta_{A}(a_2)),\alpha_{A}\beta_{A}(a_2))+\mu_B(\mu_B(b_1),\beta_{B}(b_2)),\alpha_{B}\beta_{B}(b_2))\nonumber\\
&&-\mu_A(\alpha_{A}(a_1),\mu_A(\beta_{A}(a_2),\alpha_{A}(a_2)))-\mu_B(\alpha_{B}(b_1),\mu_B(\beta_{B}(b_2),\alpha_{B}(b_2)))
\nonumber\\
&=&as_{\alpha_A,\beta_A}(a_{1},\beta_{A}(a_2),\alpha_{A}(a_2))+as_{\alpha_B,\beta_B}(b_1,\beta_{B}(b_2),\alpha_{B}(b_2))=0+0=0~~(\textsl{by ~(\ref{is2})~ in}~ A~and~B).\nonumber
\end{array}$$

Hence $(A\oplus B, \mu_{A\oplus B}, \alpha_{A}+\alpha_{B},\beta_{A}+\beta_{B})$ is a BiHom-alternative algebra.
\end{proof}
\begin{ex} \label{e1}
Consider the 2-dimensional of two BiHom-alternative algebras $(A,\alpha_{1},\beta_{1},\mu_{1})$ and $(A,\alpha_{2},\beta_{2},\mu_{2})$ with a basis $\{e_1,e_2\},$ (see \cite{GRAZIANI}) for $i=1,2$ the maps $\alpha_i$, $\beta_i $ and the multiplication $\mu_i$ are defined by
$$\begin{array}{llllllll}
  \alpha_1(e_1)=e_1, && \alpha_1(e_2)=\frac{2 a}{b-1}e_1-e_2,\\
  \beta_1(e_1)=e_1, && \beta_1(e_2)=-ae_1+b e_2, \\
  \mu_1(e_1,e_1)=e_1, && \mu_1(e_1,e_2)= -ae_1+b e_2, \\
  \mu_1(e_2,e_1)=\frac{2 a}{b-1}e_1-e_2, && \mu_1(e_2,e_2)=-\frac{
a^2(b-2)}{(b-1)^2}e_1+ae_2,
\end{array}$$
and
$$\begin{array}{llllll}
  \alpha_2(e_1)=e_1,&&\alpha_2(e_2)=\frac{b(1-a)}{a}e_1+a e_2, \\
 \beta_2(e_1)=e_1,&& \beta_2(e_2)=be_1+(1-a) e_2, \\
  \mu_2(e_1,e_1)=e_1, && \mu_2(e_1,e_2)= be_1+(1-a) e_2, \\
 \mu_2(e_2,e_1)=\frac{b(1-a)}{a}e_1+ae_2, && \mu_2(e_2,e_2)=\frac{b}{a}e_2,
\end{array}$$
where $a$, $b$ are parameters in $\mathbb{K}$, with $b\neq 1$ in the f\/irst case
and $a\neq 0$ in the second. Using the Proposition \ref{bylv}, the quadruple $(A\oplus A,\mu=\mu_{1}+\mu_{2},\alpha=\alpha_{1}+\alpha_{2},\beta=\beta_{1}+\beta_{2})$ is a BiHom-alternative algebra where, $$\begin{array}{llllllll}
  \alpha(e_1)=2e_1,&& \alpha(e_2)=\frac{2a^{2}+b(b-1)(1-a)}{a(b-1)}e_1+(a-1) e_2, \\
 \beta(e_1)=2e_1,&& \beta(e_2)=-(a+b)e_1+(b+1-a) e_2, \\
 \mu(e_1,e_1)=2e_1, && \mu(e_1,e_2)= (b-a)e_1+(b+1-a) e_2, \\
 \mu(e_2,e_1)=\frac{2a^{2}+b(b-1)(1-a)}{a(b-1)}e_1+(a-1) e_2, && \mu(e_2,e_2)=-\frac{
a^2(b-2)}{(b-1)^2}e_1+\frac{a^{2}+b}{a}e_{2},
\end{array}$$
\end{ex}
\begin{prop}
Let $(A,\mu_A,\alpha_{1},\beta_{1})$ and $(B,\mu_B,\alpha_{2},\beta_{2})$  be two BiHom-alternative algebras and $\varphi: A\rightarrow B$ be a linear map. Then
$\varphi$ is a morphism from the BiHom-alternative $(A,\mu_A,\alpha_{1},\beta_{1})$ to the BiHom-alternative algebra $(B,\mu_B,\alpha_{2},\beta_{2})$  if and only if its graph
$\Gamma_{\varphi}$ is a BiHom-subalgebra of $(A\oplus B, \mu_{A\oplus B}, \alpha_{1}+\beta_{1},\alpha_{2}+\beta_{2}).$
\end{prop}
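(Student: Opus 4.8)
The plan is a routine verification exploiting the internal direct-sum decomposition of $A\oplus B$ together with Proposition \ref{bylv}, which guarantees that $(A\oplus B,\mu_{A\oplus B},\alpha_{1}+\alpha_{2},\beta_{1}+\beta_{2})$ is a BiHom-alternative algebra; since $\Gamma_{\varphi}$ is automatically a linear subspace (being the graph of the linear map $\varphi$), it suffices to test the two closure conditions in the definition of a BiHom-subalgebra: invariance under $\alpha_{1}+\alpha_{2}$ and $\beta_{1}+\beta_{2}$, and closure under $\mu_{A\oplus B}$.

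First I would treat the forward implication. Assuming $\varphi$ is a BiHom-algebra morphism, so that $\varphi\circ\mu_A=\mu_B\circ(\varphi\otimes\varphi)$, $\varphi\circ\alpha_{1}=\alpha_{2}\circ\varphi$ and $\varphi\circ\beta_{1}=\beta_{2}\circ\varphi$, I would pick two elements $x+\varphi(x),\,y+\varphi(y)\in\Gamma_{\varphi}$ and compute
$$\mu_{A\oplus B}\big(x+\varphi(x),y+\varphi(y)\big)=\mu_A(x,y)+\mu_B(\varphi(x),\varphi(y))=\mu_A(x,y)+\varphi(\mu_A(x,y))\in\Gamma_{\varphi},$$
and likewise $(\alpha_{1}+\alpha_{2})(x+\varphi(x))=\alpha_{1}(x)+\varphi(\alpha_{1}(x))\in\Gamma_{\varphi}$ and $(\beta_{1}+\beta_{2})(x+\varphi(x))=\beta_{1}(x)+\varphi(\beta_{1}(x))\in\Gamma_{\varphi}$. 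This shows $\Gamma_{\varphi}$ is a BiHom-subalgebra of $(A\oplus B,\mu_{A\oplus B},\alpha_{1}+\alpha_{2},\beta_{1}+\beta_{2})$.

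For the converse, assume $\Gamma_{\varphi}$ is a BiHom-subalgebra. The key observation is that every element of $\Gamma_{\varphi}$ is \emph{uniquely} of the form $z+\varphi(z)$ with $z\in A$, so its $A$-component determines its $B$-component. Applying this to $\mu_{A\oplus B}(x+\varphi(x),y+\varphi(y))=\mu_A(x,y)+\mu_B(\varphi(x),\varphi(y))\in\Gamma_{\varphi}$, whose $A$-component is $\mu_A(x,y)$, forces $\mu_B(\varphi(x),\varphi(y))=\varphi(\mu_A(x,y))$; applying it to $(\alpha_{1}+\alpha_{2})(x+\varphi(x))=\alpha_{1}(x)+\alpha_{2}(\varphi(x))\in\Gamma_{\varphi}$ forces $\alpha_{2}(\varphi(x))=\varphi(\alpha_{1}(x))$, and similarly $\beta_{2}\circ\varphi=\varphi\circ\beta_{1}$. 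Hence $\varphi$ satisfies the three defining conditions of a BiHom-algebra morphism.

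There is essentially no genuine obstacle here: the argument is pure bookkeeping with the decomposition $A\oplus B$, and the only nontrivial input is the already-established fact (Proposition \ref{bylv}) that the ambient quadruple is a BiHom-alternative algebra, which is what makes the phrase ``BiHom-subalgebra of $A\oplus B$'' meaningful. Note in particular that the left and right BiHom-alternative identities play no role in the argument, so the same proof works verbatim in the BiHom-associative, BiHom-Jordan, or any other variant of the setting.
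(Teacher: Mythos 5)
Your proposal is correct and follows essentially the same route as the paper: verify closure of $\Gamma_{\varphi}$ under $\mu_{A\oplus B}$, $\alpha_{1}+\alpha_{2}$ and $\beta_{1}+\beta_{2}$ using the morphism identities, and conversely read off the three morphism conditions from the fact that the $A$-component of an element of $\Gamma_{\varphi}$ determines its $B$-component (a point the paper uses implicitly and you make explicit, also silently correcting the typo in the structure maps of the statement). Your closing remark that the alternative identities are never used is accurate and matches the paper, whose proof is the same bookkeeping argument.
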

\begin{proof}
  Let $\varphi: (A, \mu_A, \alpha_{1},\beta_{1})\longrightarrow (B, \mu_B ,\alpha_{2},\beta_{2})$ be a morphism of BiHom-alternative algebras.
Then for all $u, v\in A,$
$$ \mu_{A\oplus B}((u+\varphi(u),v+\varphi(v))=(\mu_A(u,v)+\mu_B(\varphi(u),\varphi(v)))=(\mu_A(u,v)+\varphi(\mu_A(u,v))).$$
Thus the graph $\Gamma_{\varphi}$ is closed under the multiplication $\mu_{A\oplus B}.$
Furthermore since $\varphi\circ\alpha_{1}=\alpha_{2}\circ\varphi$, we have $(\alpha_{1}\oplus\alpha_{2})(u, \varphi(u)) = (\alpha_{1}(u),
\alpha_{2}\circ\varphi(u)) = (\alpha_{1}(u), \varphi\circ\alpha_{1}(u)).$ In the same way, we have $(\beta_{1}\oplus\beta_{2})(u, \varphi(u)) = (\beta_{1}(u),
\beta_{2}\circ\varphi(u)) = (\beta_{1}(u), \varphi\circ\beta_{1}(u)),$
which implies that $\Gamma_{\varphi}$ is closed under $\alpha_{1}\oplus\alpha_{2}$ and $\beta_{1}\oplus\beta_{2}$ Thus $\Gamma_{\varphi}$ is a BiHom-subalgebra of
$(A\oplus B, \mu_{A\oplus B} , \alpha_{1}\oplus \alpha_{2},\beta_{1}+\beta_{2}).$\\
Conversely, if the graph $\Gamma_{\varphi}\subset A\oplus B$ is a
BiHom-subalgebra of
$(A\oplus B, \mu_{A\oplus B} , \alpha_{1}\oplus \alpha_{2},\beta_{1}+\beta_{2})$ then we have
$$\mu_{A\oplus B}((u+ \varphi(u)), (v+ \varphi(v)))=(\mu_A(u, v) + \mu_B(\varphi(u), \varphi(v)) )\in\Gamma_{\varphi},$$
which implies that
$$\mu_B(\varphi(u), \varphi(v))=\varphi(\mu_A(u, v)).$$
Furthermore, $(\alpha_{1}\oplus\alpha_{2})(\Gamma_{\varphi})\subset\Gamma_{\varphi},~(\beta_{1}\oplus\beta_{2})(\Gamma_{\varphi})\subset\Gamma_{\varphi}$ implies
$$(\alpha_{1}\oplus\alpha_{2})(u, \varphi(u))=(\alpha_{1}(u), \alpha_{2}\circ\varphi(u)) \in\Gamma_{\varphi},$$
$$(\beta_{1}\oplus\beta_{2})(u, \varphi(u))=(\beta_{1}(u), \beta_{2}\circ\varphi(u)) \in\Gamma_{\varphi}.$$
which is equivalent to the condition $\beta\circ\varphi(u)=\varphi\circ\alpha(u),$ i.e. $ \beta\circ\varphi=\varphi\circ\alpha.$ Therefore, $\varphi$ is a
morphism of BiHom-alternative algebras.
\end{proof}

\begin{prop}\label{prop0}
Let $(A,\mu,\alpha,\beta)$ be a BiHom alternative algebra, $\alpha',\beta':A\longrightarrow A$ be two algebra homomorphisms
such that any two of the maps $\alpha,\beta,\alpha',\beta'$ commute. Then $A_{\alpha',\beta'}=(A,\mu_{\alpha',\beta'}:=\mu\circ(\alpha'\otimes\beta'),\alpha\circ\alpha',\beta\circ\beta')$ is a
BiHom-alternative algebra. Moreover suppose that $(B, \mu', \gamma,\delta)$ is another BiHom-alternative algebra and $\gamma',\delta'$ be an
endomorphisms of $B$ suh that any two of the maps $\gamma,\delta,\gamma',\delta'$ commute.
If $f : (A,\mu, \alpha,\beta)\rightarrow (B, \mu', \gamma,\delta)$
is a morphism such that $f\circ\alpha'=\gamma\circ'f$ and $f\circ\beta'=\delta'\circ f$, then $f : A_{\alpha',\beta'}\rightarrow B_{\gamma',\delta'}$ is also a morphism.
\end{prop}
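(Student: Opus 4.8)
The plan is to treat this as a Yau-type twisting statement and to reduce both BiHom-alternativity conditions for $A_{\alpha',\beta'}$ to the corresponding ones for $A$ via a single associator identity. Write $\widetilde{\alpha}=\alpha\circ\alpha'$, $\widetilde{\beta}=\beta\circ\beta'$ and $\widetilde{\mu}=\mu_{\alpha',\beta'}=\mu\circ(\alpha'\otimes\beta')$. First I would dispatch the structural requirements: since any two of $\alpha,\beta,\alpha',\beta'$ commute we get $\widetilde{\alpha}\widetilde{\beta}=\alpha\alpha'\beta\beta'=\beta\beta'\alpha\alpha'=\widetilde{\beta}\widetilde{\alpha}$, so $(A,\widetilde{\alpha},\widetilde{\beta})$ is a BiHom-module; and using that $\alpha'$, $\beta'$ are algebra homomorphisms together with the multiplicativity of $\alpha$, $\beta$ with respect to $\mu$ one computes $\widetilde{\alpha}\circ\widetilde{\mu}(x,y)=\mu(\alpha\alpha'^2(x),\alpha\alpha'\beta'(y))=\widetilde{\mu}(\widetilde{\alpha}(x),\widetilde{\alpha}(y))$, and symmetrically for $\widetilde{\beta}$, so $A_{\alpha',\beta'}$ is a multiplicative BiHom-algebra. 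These are one- or two-line expansions.

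The heart of the argument is the identity
$$as_{\widetilde{\alpha},\widetilde{\beta}}(x,y,z)=as_{\alpha,\beta}\big(\alpha'^2(x),\,\alpha'\beta'(y),\,\beta'^2(z)\big)\qquad\text{for all }x,y,z\in A.$$
To establish it I would expand $as_{\widetilde{\alpha},\widetilde{\beta}}(x,y,z)=\widetilde{\mu}(\widetilde{\mu}(x,y),\widetilde{\beta}(z))-\widetilde{\mu}(\widetilde{\alpha}(x),\widetilde{\mu}(y,z))$ from the definitions of $\widetilde{\mu},\widetilde{\alpha},\widetilde{\beta}$, then push $\alpha'$ through $\mu$ in the first term and $\beta'$ through $\mu$ in the second (each being an algebra homomorphism), and finally use pairwise commutativity of $\alpha,\beta,\alpha',\beta'$ to move the outer $\alpha$ and $\beta$ to the front. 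The first term becomes $\mu(\mu(\alpha'^2(x),\alpha'\beta'(y)),\beta(\beta'^2(z)))$ and the second becomes $\mu(\alpha(\alpha'^2(x)),\mu(\alpha'\beta'(y),\beta'^2(z)))$, whose difference is exactly $as_{\alpha,\beta}(\alpha'^2(x),\alpha'\beta'(y),\beta'^2(z))$.

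Given this identity, I would invoke the characterisation of one-sided BiHom-alternativity from the lemma above (identities $(\ref{is1})$ and $(\ref{is2})$). For the left identity it suffices to check $as_{\widetilde{\alpha},\widetilde{\beta}}(\widetilde{\beta}(x),\widetilde{\alpha}(x),y)=0$; substituting into the identity above and using commutativity rewrites the right-hand side as $as_{\alpha,\beta}(\beta(u),\alpha(u),\beta'^2(y))$ with $u=\alpha'^2\beta'(x)$, which vanishes by $(\ref{is1})$ in $A$. Symmetrically, for the right identity it suffices to check $as_{\widetilde{\alpha},\widetilde{\beta}}(x,\widetilde{\beta}(y),\widetilde{\alpha}(y))=0$, and the identity rewrites this as $as_{\alpha,\beta}(\alpha'^2(x),\beta(v),\alpha(v))$ with $v=\alpha'\beta'^2(y)$, which vanishes by $(\ref{is2})$ in $A$. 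Hence $A_{\alpha',\beta'}$ is a BiHom-alternative algebra.

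For the last assertion (which I read as the hypotheses $f\circ\alpha'=\gamma'\circ f$ and $f\circ\beta'=\delta'\circ f$), I would just verify the three defining conditions of a morphism $f\colon A_{\alpha',\beta'}\to B_{\gamma',\delta'}$: from $f\circ\mu=\mu'\circ(f\otimes f)$ and the hypotheses, $f(\widetilde{\mu}(x,y))=\mu'(f\alpha'(x),f\beta'(y))=\mu'(\gamma'f(x),\delta'f(y))=\widetilde{\mu'}(f(x),f(y))$; from $f\circ\alpha=\gamma\circ f$ together with $f\circ\alpha'=\gamma'\circ f$, $f(\widetilde{\alpha}(x))=\gamma\gamma'f(x)=\widetilde{\gamma}(f(x))$; and likewise $f\circ\widetilde{\beta}=\widetilde{\delta}\circ f$. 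The only part requiring care anywhere is keeping the bookkeeping of the commuting maps straight in the associator computation of the second paragraph; I do not expect a genuine obstacle.
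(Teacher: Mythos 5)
Your proposal is correct and follows essentially the same route as the paper: both arguments push $\alpha'$ and $\beta'$ through $\mu$ as algebra homomorphisms, use pairwise commutativity, and reduce the twisted alternativity conditions to identities (\ref{is1}) and (\ref{is2}) in $A$, with the same verification for the morphism claim. Your only deviation is organizational -- you isolate the general identity $as_{\alpha\alpha',\beta\beta'}(x,y,z)=as_{\alpha,\beta}(\alpha'^2(x),\alpha'\beta'(y),\beta'^2(z))$ once and specialize it twice, whereas the paper carries out the two expansions inline, which amounts to the same computation.
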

\begin{proof}
First it is easy to see that, $\forall x, y\in A$ we have

$\begin{array}{lllll}
&&(\alpha\circ\alpha')\circ\mu_{\alpha',\beta'}(x,y)=\mu_{\alpha',\beta'}(\alpha\circ\alpha'(x),\alpha\circ\alpha'(y)),\\
&&(\beta\circ\beta')\circ\mu_{\alpha',\beta'}(x,y)=\mu_{\alpha',\beta'}((\beta\circ\beta')(x),(\beta\circ\beta')(y)).
\end{array}$\\

Now, to prove the left and right BiHom-alternative identities for $A_{\alpha',\beta'}$ pick  $ x,y\in A.$ Then
$$\begin{array}{lll}
&&as_{\alpha\alpha',\beta\beta'}(\beta\beta'(x),\alpha\alpha'(x),y)\\&=&\mu_{\alpha',\beta'}(\mu_{\alpha',\beta'}(\beta\beta  '(x),\alpha\alpha'(x)),\beta\beta'(y))-\mu_{\alpha',\beta'}(\alpha\alpha'\beta\beta'(x),\mu_{\alpha',\beta'}(x),y))\\
&=&\mu_{\alpha',\beta'}(\mu(\alpha'\beta'\beta(x),\alpha\alpha'\beta'(x)),\beta\beta'(y))-\mu_{\alpha',\beta'}(\alpha\alpha'\beta\beta'(x),\mu(\alpha\alpha'^{2}(x),\beta'(y))\\
&=&\mu(\mu(\alpha'^{2}\beta'\beta(x),\alpha\alpha'\beta'(x)),\beta\beta'^{2}(y))-\mu(\alpha\alpha'^{2}\beta\beta'(x),\mu(\alpha\alpha'^{2}\beta'(x),\beta'^{2}(y)))\\
&=&as_{\alpha,\beta}(\beta(\alpha'^{2}\beta'(x)),\alpha(\alpha'^{2}\beta'(x)),\beta'^{2}(y))=0~~(\textsl{by ~(\ref{is1})~ in}~ A),
\end{array}$$
$$\begin{array}{lll}
&&as_{\alpha\alpha',\beta\beta'}(x,\beta\beta'(y),\alpha\alpha'(y))\\&=&\mu_{\alpha',\beta'}(\mu_{\alpha',\beta'}(x,\beta\beta'  (y)),\beta\alpha\alpha'(y))-\mu_{\alpha',\beta'}(\alpha\alpha'(x),\mu_{\alpha',\beta'}(\beta\beta'(y),\alpha\alpha'(y)))\\
&=&\mu_{\alpha',\beta'}(\mu(\alpha'(x),\beta\beta'^{2}(x)),\beta\alpha\alpha'(y))-\mu_{\alpha',\beta'}(\alpha\alpha'(x),\mu(\beta\alpha'\beta'(y),\alpha\alpha'\beta'(y)))\\
&=&\mu(\mu(\alpha'^{2}(x),\beta\beta'^{2}\alpha'(y)),\beta\beta'\alpha\alpha'(y))-\mu(\alpha\alpha'^{2}(x),\mu(\beta\alpha'\beta'^{2}(y),\alpha\alpha'\beta'^{2}(y)))\\
&=&as_{\alpha,\beta}(\alpha'^{2}(x),\beta(\alpha'\beta'^{2}(y)),\alpha(\alpha'\beta'^{2}(y)))=0~~(\textsl{by ~(\ref{is2})~ in}~A).
\end{array}$$
Then $A_{\alpha',\beta'}=(A,\mu_{\alpha',\beta'}:=\mu\circ(\alpha'\otimes\beta'),\alpha\circ\alpha',\beta\circ\beta')$ is a BiHom-alternative algebra.

Finally, we have:
$$\begin{array}{lll}
f\circ\mu_{\alpha',\beta'}(x,y)&=&f\circ\mu_{\alpha,\beta}(\alpha'(x),\beta'(y))\\
&=&\mu_{\gamma,\delta}\circ f(\alpha'(x),\beta'(y))\\
&=&\mu_{\gamma,\delta}(f\alpha'(x),f\beta'(y))\\
&=&\mu_{\gamma,\delta}(\gamma'f(x),\delta'f(y))\\
&=&\mu_{\gamma',\delta'}(f(x),f(y)).
\end{array}$$
This completes the proof.
\end{proof}

\begin{cor}\label{prop1}
Let $(A,\mu,\alpha,\beta)$ be a BiHom alternative algebra. Then $(A,\mu_{k}:=\mu\circ(\alpha^{k}\otimes\beta^{k}),\alpha^{k+1},\beta^{k+1})$ is a BiHom-alternative algebra.
\end{cor}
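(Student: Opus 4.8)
The plan is to obtain Corollary~\ref{prop1} as a direct specialization of Proposition~\ref{prop0}. In Proposition~\ref{prop0} we are free to choose the two extra endomorphisms $\alpha',\beta'$ of $A$, provided any two of the four maps $\alpha,\beta,\alpha',\beta'$ commute. Here I would simply take $\alpha'=\alpha^{k}$ and $\beta'=\beta^{k}$, both of which are algebra homomorphisms because $\alpha$ and $\beta$ are, and which commute with $\alpha$, with $\beta$, and with each other since $\alpha$ and $\beta$ commute (by the BiHom-module axiom $\alpha\beta=\beta\alpha$). Thus the hypotheses of Proposition~\ref{prop0} are satisfied.

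With this choice, the twisted product becomes $\mu_{\alpha',\beta'}=\mu\circ(\alpha^{k}\otimes\beta^{k})=\mu_{k}$, and the twisting maps become $\alpha\circ\alpha'=\alpha\circ\alpha^{k}=\alpha^{k+1}$ and $\beta\circ\beta'=\beta\circ\beta^{k}=\beta^{k+1}$. Hence $A_{\alpha',\beta'}=(A,\mu_{k},\alpha^{k+1},\beta^{k+1})$, and Proposition~\ref{prop0} tells us this is a BiHom-alternative algebra, which is exactly the assertion of the corollary. (For $k=0$ one recovers the original algebra $A$, so the statement is trivially true there as well.)

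There is essentially no obstacle: the only point requiring a word of care is verifying that $\alpha^{k}$ and $\beta^{k}$ are genuine algebra homomorphisms in the sense required, i.e.\ that they are multiplicative with respect to $\mu$ — this follows by iterating the multiplicativity of $\alpha$ and $\beta$ — and that all four maps pairwise commute, which reduces to $\alpha\beta=\beta\alpha$. Once these trivialities are noted, the corollary follows immediately by applying Proposition~\ref{prop0} with $\alpha'=\alpha^{k}$, $\beta'=\beta^{k}$.
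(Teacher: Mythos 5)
Your proposal is correct and coincides with the paper's own argument: the corollary is obtained by applying Proposition \ref{prop0} with $\alpha'=\alpha^{k}$ and $\beta'=\beta^{k}$. The extra remarks you make (multiplicativity of the iterates and pairwise commutativity via $\alpha\beta=\beta\alpha$) are exactly the routine checks implicit in the paper's one-line proof.
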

\begin{proof}
Apply Proposition \ref{prop0} with $\alpha'=\alpha^{k}$ and $\beta'=\beta^{k}$.
\end{proof}
\begin{prop}
Let $(\mathcal{A},\mu,\alpha_{1},\beta_{1})$ be a BiHom-associative algebra and $(A,\nu,\alpha_{2},\beta_{2})$ be a BiHom-alternative algebra.
Then $(\mathcal{A}\otimes A,\mu_{\mathcal{A}\otimes A},\alpha,\beta)$ is a BiHom-alternative algebra, where the bilinear map $\mu_{\mathcal{A}\otimes A}:(\mathcal{A}\otimes A){\times 2}\longrightarrow\mathcal{A}\otimes A$ are given by $$\mu_{\mathcal{A}\otimes A}(a_{1}\otimes x_{1},a_{2}\otimes x_{2})=\mu(a_{1},a_{2})\otimes\nu(x_{1},x_{2}),~\forall a_{i}\in\mathcal{A},~x_{i}\in A,~i=1,2,$$
and the two linear maps $\alpha,\beta:\mathcal{A}\otimes A\longrightarrow\mathcal{A}\otimes A$ are given by $\alpha(a_{1}\otimes x_{1})=\alpha_{1}(a_{1})\otimes \alpha_{2}(x_{1})$ and $\beta(a_{1}\otimes x_{1})=\beta_{1}(a_{1})\otimes \beta_{2}(x_{1})$.
\end{prop}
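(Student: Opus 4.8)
The plan is to check directly that the quadruple $(\mathcal{A}\otimes A,\mu_{\mathcal{A}\otimes A},\alpha,\beta)$ fulfils every axiom of a BiHom-alternative algebra, reducing each requirement to the corresponding property of $\mathcal{A}$ and $A$ evaluated on elementary tensors $a\otimes x$. First I would dispose of the structural conditions. The equality $\alpha\beta=\beta\alpha$ on $\mathcal{A}\otimes A$ is immediate from $\alpha_1\beta_1=\beta_1\alpha_1$ and $\alpha_2\beta_2=\beta_2\alpha_2$, so $(\mathcal{A}\otimes A,\alpha,\beta)$ is a BiHom-module. Multiplicativity, $\alpha\circ\mu_{\mathcal{A}\otimes A}=\mu_{\mathcal{A}\otimes A}\circ\alpha^{\otimes 2}$ and $\beta\circ\mu_{\mathcal{A}\otimes A}=\mu_{\mathcal{A}\otimes A}\circ\beta^{\otimes 2}$, follows slot by slot from the multiplicativity of $\alpha_1,\beta_1$ with respect to $\mu$ and of $\alpha_2,\beta_2$ with respect to $\nu$. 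Hence $(\mathcal{A}\otimes A,\mu_{\mathcal{A}\otimes A},\alpha,\beta)$ is a multiplicative BiHom-algebra, and it remains to establish the left and right BiHom-alternative identities.

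The key step is the computation of the BiHom-associator on elementary tensors. Expanding the definition of $as_{\alpha,\beta}$ together with the definitions of $\mu_{\mathcal{A}\otimes A}$, $\alpha$ and $\beta$ gives, for all $a_i\in\mathcal{A}$ and $x_i\in A$,
\begin{equation*}
as_{\alpha,\beta}(a_1\otimes x_1,a_2\otimes x_2,a_3\otimes x_3)=\mu(\mu(a_1,a_2),\beta_1(a_3))\otimes\nu(\nu(x_1,x_2),\beta_2(x_3))-\mu(\alpha_1(a_1),\mu(a_2,a_3))\otimes\nu(\alpha_2(x_1),\nu(x_2,x_3)).
\end{equation*}
Because $(\mathcal{A},\mu,\alpha_1,\beta_1)$ is BiHom-associative, the two $\mathcal{A}$-components agree, $\mu(\mu(a_1,a_2),\beta_1(a_3))=\mu(\alpha_1(a_1),\mu(a_2,a_3))$, and the whole expression factors as
\begin{equation*}
as_{\alpha,\beta}(a_1\otimes x_1,a_2\otimes x_2,a_3\otimes x_3)=\mu(\mu(a_1,a_2),\beta_1(a_3))\otimes as_{\alpha_2,\beta_2}(x_1,x_2,x_3),
\end{equation*}
so that the BiHom-associator of $\mathcal{A}\otimes A$ is entirely carried by that of $A$.

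To conclude, I would invoke the Lemma characterizing left (resp. right) BiHom-alternativity through \eqref{is1} (resp. \eqref{is2}); thus it suffices to verify, on elementary tensors $u=a\otimes x$ and $v=c\otimes z$, that $as_{\alpha,\beta}(\beta(u),\alpha(u),v)=0$ and $as_{\alpha,\beta}(u,\beta(v),\alpha(v))=0$. Plugging $\beta(u)=\beta_1(a)\otimes\beta_2(x)$ and $\alpha(u)=\alpha_1(a)\otimes\alpha_2(x)$ into the factorization above, the first quantity equals an element of $\mathcal{A}$ tensored with $as_{\alpha_2,\beta_2}(\beta_2(x),\alpha_2(x),z)$, which is $0$ by \eqref{is1} applied in $A$; likewise, using $\beta(v)=\beta_1(c)\otimes\beta_2(z)$ and $\alpha(v)=\alpha_1(c)\otimes\alpha_2(z)$, the second quantity is an element of $\mathcal{A}$ tensored with $as_{\alpha_2,\beta_2}(x,\beta_2(z),\alpha_2(z))$, which is $0$ by \eqref{is2} applied in $A$. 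Passing to arbitrary elements by linearity then gives both BiHom-alternative identities for $\mathcal{A}\otimes A$.

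The structural verifications are routine. The one step that requires real care is the associator expansion: one must keep each of the twists $\alpha_1,\beta_1,\alpha_2,\beta_2$ in its correct position, so that after the cancellation coming from BiHom-associativity of $\mathcal{A}$ the surviving $A$-factor is exactly of the form $as_{\alpha_2,\beta_2}(\beta_2(x),\alpha_2(x),z)$ (resp.\ $as_{\alpha_2,\beta_2}(x,\beta_2(z),\alpha_2(z))$) to which \eqref{is1}--\eqref{is2} of $A$ apply verbatim. All of the algebraic content --- BiHom-associativity of $\mathcal{A}$ and the two one-variable BiHom-alternative identities of $A$ --- enters precisely at this point.
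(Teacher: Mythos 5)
Your expansion of the BiHom-associator on elementary tensors and its factorization as $\mu(\mu(a_1,a_2),\beta_1(a_3))\otimes as_{\alpha_2,\beta_2}(x_1,x_2,x_3)$ is exactly the computation in the paper, and it is correct as far as it goes. The genuine gap is your final sentence, ``passing to arbitrary elements by linearity.'' The identities \eqref{is1} and \eqref{is2} that the Lemma asks you to verify are quadratic in the repeated argument, not multilinear, so knowing $as_{\alpha,\beta}(\beta(u),\alpha(u),w)=0$ for elementary tensors $u=a\otimes x$ does not give it for sums $u=a\otimes x+b\otimes y$: the cross terms survive. Concretely, for $u=a\otimes x+b\otimes y$ and $w=c\otimes z$, your factorization together with \eqref{is1} and \eqref{sa1} in $A$ yields
\begin{align*}
as_{\alpha,\beta}(\beta(u),\alpha(u),w)
=\Bigl(\mu\bigl(\mu(\beta_1(a),\alpha_1(b)),\beta_1(c)\bigr)-\mu\bigl(\mu(\beta_1(b),\alpha_1(a)),\beta_1(c)\bigr)\Bigr)\otimes as_{\alpha_2,\beta_2}(\beta_2(x),\alpha_2(y),z),
\end{align*}
which has no reason to vanish unless $\mathcal{A}$ is commutative or $A$ is BiHom-associative. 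Equivalently, the polarized identity \eqref{sa1} for $\mathcal{A}\otimes A$, which \emph{is} multilinear and therefore only needs to be checked on elementary tensors, visibly fails.

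This is not a repairable slip in your write-up: the statement is false at this level of generality. Take $\alpha_i=\beta_i=\mathrm{id}$, $\mathcal{A}=M_2(\mathbb{K})$ (associative, noncommutative) and $A$ an octonion algebra over $\mathbb{K}$ (alternative, nonassociative); with $a=E_{11}$, $b=E_{12}$, $c=I$ and $x,y,z$ chosen so that $as(x,y,z)\neq 0$, the displayed expression equals $E_{12}\otimes as(x,y,z)\neq 0$, so $\mathcal{A}\otimes A$ is not even left alternative. The paper's own proof has the same defect --- it too verifies \eqref{is1} and \eqref{is2} only on elementary tensors and concludes --- so you have faithfully reproduced the published argument, gap included; but as a proof it does not stand, and the proposition needs an additional hypothesis (for instance $\mathcal{A}$ commutative, in which case the cross terms above cancel and your argument goes through).
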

\begin{proof}
Since $\alpha_{1}\circ\beta_{1}=\beta_{1}\circ\alpha_{1}$ and $\alpha_{2}\circ\beta_{2}=\beta_{2}\circ\alpha_{2}$, we have $\alpha\circ\beta=\beta\circ\alpha$.

First, $(\alpha_{1}+\alpha_{2})$ is multiplicative with respect to $\mu_{\mathcal{A}\oplus A}.$ Indeed, $\forall (a_{1},x_{1}),~(a_{2},x_{2})\in \mathcal{A}\times A$
$$\begin{array}{llll}&&\alpha\circ\mu_{\mathcal{A}\otimes A}(a_{1}\otimes x_{1},a_{2}\otimes x_{2})\\&=&\alpha(\mu(a_{1},a_{2})\otimes\nu(x_{1},x_{2}))\\
&=&(\alpha_{1}\circ\mu(a_{1},a_{2})\otimes\alpha_{2}\nu(x_{1},x_{2}))\\
&=&\mu(\alpha_{1}(a_{1}),\alpha_{1}(a_{2}))\otimes\nu(\alpha_{2}(x_{1}),\alpha_{2}(x_{2}))\\
&=&\mu(\alpha_{1}(a_{1})\otimes \alpha_{2}(x_{1}),\alpha_{1}(a_{2}]\otimes\alpha_{2}(x_{2}))\\
&=&\mu_{\mathcal{A}\otimes A}(\alpha(a_{1}\otimes x_{1}),\alpha(a_{2}\otimes x_{2})).
\end{array}$$
Similarly, we have $\beta\circ\mu_{\mathcal{A}\otimes A}(a_{1}\otimes x_{1},a_{2}\otimes x_{2})=
\mu_{\mathcal{A}\otimes A}(\beta(a_{1}\otimes x_{1}),\beta(a_{2}\otimes x_{2}))$.

Now, for $a_{1},a_{2}\in\mathcal{A},~x_{1},x_{2}\in A,$ we prove the left and right BiHom-alternative identities for $\mathcal{A}\oplus A$ as follows
$$\begin{array}{llll}&&as_{\alpha,\beta}(\beta(a_{1}\otimes x_{1}),\alpha(a_{1}\otimes x_{1})),a_{2}\otimes x_{2})\\&=&\mu_{\mathcal{A}\otimes A}(\mu_{\mathcal{A}\otimes A}(\beta(a_{1}\otimes x_{1}),\alpha(a_{1}\otimes x_{1})),\beta(a_{2}\otimes x_{2}))\\&&-\mu_{\mathcal{A}\otimes A}(\alpha\beta(a_{1}\otimes x_{1}),\mu_{\mathcal{A}\otimes A}(\alpha(a_{1}\otimes x_{1})),a_{2}\otimes x_{2}))\\
&=&\mu_{\mathcal{A}\otimes A}(\mu_{\mathcal{A}\otimes A}(\beta_{1}(a_{1})\otimes\beta_{2}(x_{1}),\alpha_{1}(a_{1})\otimes\alpha_{2}(x_{1})),\beta_{1}(a_{2})\otimes \beta_{2}(x_{2}))\\&&-\mu_{\mathcal{A}\otimes A}(\alpha\beta(a_{1}\otimes x_{1}),\mu_{\mathcal{A}\otimes A}(\alpha_{1}(a_{1})\otimes\alpha_{2}(x_{1}),a_{2}\otimes x_{2}))\\
&=&\mu_{\mathcal{A}\otimes A}(\mu(\beta_{1}(a_{1}),\alpha_{1}(a_{1}))\otimes\nu(\beta_{2}(x_{1}),\alpha_{2}(x_{1})),\beta_{1}(a_{2})\otimes\beta_{2}(a_{2}))\\
&&-\mu_{\mathcal{A}\otimes A}(\alpha_{1}\beta_{1}(a_{1})\otimes\alpha_{2}\beta_{2}(x_{1}),\mu(\alpha_{1}(a_{1}),a_{2})\otimes\nu(\alpha_{2}(x_{1}),x_{2}))\\
&=&\mu(\mu(\beta_{1}(a_{1}),\alpha_{1}(a_{1})),\beta_{1}(a_{2}))\otimes\nu(\nu(\beta_{2}(x_{1}),\alpha_{2}(x_{1})),\beta_{2}(x_{2}))\\
&&-\mu(\alpha_{1}\beta_{1}(a_{1}),\mu(\alpha_{1}(a_{1}),a_{2}))\otimes\nu(\alpha_{2}\beta_{2}(x_{1}),\nu(\alpha_{2}(x_{1}),x_{2}))\\
&=&0~~(\textsl{by ~(\ref{BiHomAss})~in~$\mathcal{A}$~and~(\ref{is1}) in}~A),
\end{array}$$
$$\begin{array}{lll}
&&as_{\alpha,\beta}(a_{1}\otimes x_{1},\beta(a_{2}\otimes x_{2})),\alpha(a_{2}\otimes x_{2}))\\
&=&\mu_{\mathcal{A}\otimes A}(\mu_{\mathcal{A}\otimes A}(a_{1}\otimes x_{1},\beta(a_{2}\otimes x_{2})),\alpha\beta(a_{2}\otimes x_{2}))\\
&&-\mu_{\mathcal{A}\otimes A}(\alpha(a_{1}\otimes x_{1}),\mu_{\mathcal{A}\otimes A}(\beta(a_{2}\otimes x_{2}),\alpha(a_{2}\otimes x_{2})))\\
&=&\mu_{\mathcal{A}\otimes A}(\mu_{\mathcal{A}\otimes A}(a_{1}\otimes x_{1},\beta_{1}(a_{2})\otimes\beta_{2}(x_{2})),\alpha_{1}\beta_{1}(a_{2})\otimes\alpha_{2}\beta_{2}(x_{2}))\\
&&-\mu_{\mathcal{A}\otimes A}(\alpha_{1}(a_{1}\otimes\alpha_{2}(x_{1}),\mu_{\mathcal{A}\otimes A}(\beta_{1}(a_{2})\otimes\beta_{2}(x_{2}),\alpha_{1}(a_{2})\otimes\alpha_{2}(x_{2}))\\
&=&\mu_{\mathcal{A}\otimes A}(\mu(a_{1},\beta_{1}(a_{2}))\otimes\nu(x_{1},\beta_{2}(x_{2})),\alpha_{1}\beta_{1}(a_{2})\otimes\alpha_{2}\beta_{2}(x_{2}))\\
&&-\mu_{\mathcal{A}\otimes A}(\alpha_{1}a_{1})\otimes\alpha_{2}(x_{1}),\mu(\beta_{1}(a_{2}),\alpha_{1}(a_{2}))\otimes\nu(\beta_{2}(x_{2})\otimes\alpha_{2}(x_{2}))\\
&=&\mu(\mu(a_{1},\beta_{1}(a_{2}),\alpha_{1}\beta_{1}(a_{2}))\otimes\nu(\nu(x_{1}),\beta_{2}(x_{2})),\alpha_{2}\beta_{2}(x_{2}))\\
&&-\mu(\alpha_{1}(a_{1}),\mu(\beta_{1}(a_{2}),\alpha_{1}(a_{2}))\otimes\nu(\alpha_{2}(x_{1}),\nu(\beta_{2}(x_{2})\otimes\alpha_{2}(x_{2}))\\
&=&0~~(\textsl{by ~(\ref{BiHomAss})~in~$\mathcal{A}$~and~(\ref{is2}) in}~A).
\end{array}$$
Then $(\mathcal{A}\otimes A,\mu_{\mathcal{A}\otimes A},\alpha,\beta)$ is a BiHom-alternative algebra.
\end{proof}

\begin{thm} Let $(A, \mu, \a,\b, R)$ be a Rota-Baxter BiHom-alternative algebra of weight
$0$ such that $R$ commutes with $\a, \b.$ Define a new multiplication on $A$ by
\begin{eqnarray*}
\mu_{R}(x,y)=\mu(R(x), y)+\mu(x, R(y)),~~~\mbox{for any $x,y\in A$}.
\end{eqnarray*}
 Then $A_R=(A, \mu_{R}, \a,\b, R)$ is a  BiHom-alternative algebra.
\end{thm}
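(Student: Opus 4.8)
The plan is to first check that $\alpha$ and $\beta$ are multiplicative with respect to the new product $\mu_R$, and then to reduce the left and right BiHom-alternative identities for $\mu_R$ to the corresponding identities for $\mu$ by expressing the BiHom-associator of $(A,\mu_R,\alpha,\beta)$ through that of $(A,\mu,\alpha,\beta)$. For multiplicativity, since $R\alpha=\alpha R$, $R\beta=\beta R$ and $\alpha,\beta$ are multiplicative for $\mu$, I would simply compute
$$\alpha(\mu_R(x,y))=\mu(\alpha R(x),\alpha(y))+\mu(\alpha(x),\alpha R(y))=\mu(R\alpha(x),\alpha(y))+\mu(\alpha(x),R\alpha(y))=\mu_R(\alpha(x),\alpha(y)),$$
and likewise $\beta(\mu_R(x,y))=\mu_R(\beta(x),\beta(y))$; together with $\alpha\beta=\beta\alpha$ this shows $(A,\mu_R,\alpha,\beta)$ is a multiplicative BiHom-algebra.

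The heart of the argument is the identity
$$as^{R}_{\alpha,\beta}(x,y,z)=as_{\alpha,\beta}(R(x),R(y),z)+as_{\alpha,\beta}(R(x),y,R(z))+as_{\alpha,\beta}(x,R(y),R(z)),\qquad\forall\,x,y,z\in A,$$
where $as^{R}_{\alpha,\beta}$ denotes the BiHom-associator of $(A,\mu_R,\alpha,\beta)$. To prove it I would expand $\mu_R(\mu_R(x,y),\beta(z))-\mu_R(\alpha(x),\mu_R(y,z))$ using the definition of $\mu_R$, the commutation relations $R\alpha=\alpha R$ and $R\beta=\beta R$, and above all the simplification
$$R(\mu_R(u,v))=R\big(\mu(R(u),v)+\mu(u,R(v))\big)=\mu(R(u),R(v)),$$
which is exactly the Rota-Baxter identity of weight $0$. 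This produces six summands of the two shapes $\mu(\mu(\cdot,\cdot),\beta(\cdot))$ and $\mu(\alpha(\cdot),\mu(\cdot,\cdot))$, which recombine in pairs into the three BiHom-associators of $\mu$ on the right-hand side.

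With the key identity in hand, the left BiHom-alternative identity for $A_R$ follows by the substitution $(x,y,z)\mapsto(\beta(x),\alpha(x),y)$ and using $R\alpha=\alpha R$, $R\beta=\beta R$:
$$as^{R}_{\alpha,\beta}(\beta(x),\alpha(x),y)=as_{\alpha,\beta}(\beta R(x),\alpha R(x),y)+\Big(as_{\alpha,\beta}(\beta R(x),\alpha(x),R(y))+as_{\alpha,\beta}(\beta(x),\alpha R(x),R(y))\Big).$$
Here the first term is $0$ by $(\ref{is1})$ applied to $R(x)$ and $y$, while the bracketed sum is $0$ by the left BiHom-alternative identity $(\ref{sa1})$ applied to $R(x)$, $x$, $R(y)$; hence $A_R$ satisfies $(\ref{is1})$. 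Dually, with $(x,y,z)\mapsto(x,\beta(y),\alpha(y))$,
$$as^{R}_{\alpha,\beta}(x,\beta(y),\alpha(y))=as_{\alpha,\beta}(x,\beta R(y),\alpha R(y))+\Big(as_{\alpha,\beta}(R(x),\beta R(y),\alpha(y))+as_{\alpha,\beta}(R(x),\beta(y),\alpha R(y))\Big),$$
whose first term vanishes by $(\ref{is2})$ on $x$, $R(y)$ and whose bracketed sum vanishes by $(\ref{sa2})$ on $R(x)$, $R(y)$, $y$; hence $A_R$ satisfies $(\ref{is2})$. So $A_R$ is both left and right BiHom-alternative, i.e. a BiHom-alternative algebra.

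The step I expect to be the main obstacle is the careful bookkeeping in the six-term expansion giving the key identity — making sure each $R$ lands in the correct slot after invoking the weight-$0$ Rota-Baxter relation and the commutations with $\alpha$ and $\beta$. Everything after that is a direct appeal to $(\ref{is1})$, $(\ref{is2})$, $(\ref{sa1})$ and $(\ref{sa2})$ for $\mu$, with no further computation.
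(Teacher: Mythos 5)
Your proof is correct and follows essentially the same route as the paper: the key step in both is expanding the $\mu_R$-associator via the weight-$0$ Rota--Baxter relation $R(\mu_R(u,v))=\mu(R(u),R(v))$ into the three $\mu$-associators $as_{\alpha,\beta}(R(\cdot),R(\cdot),\cdot)+as_{\alpha,\beta}(R(\cdot),\cdot,R(\cdot))+as_{\alpha,\beta}(\cdot,R(\cdot),R(\cdot))$ and then invoking (\ref{is1})/(\ref{sa1}) and (\ref{is2})/(\ref{sa2}). The only difference is presentational: you isolate the expansion as a general identity in $x,y,z$ and then specialize, whereas the paper performs the same expansion directly at the arguments $(\beta(x),\alpha(x),y)$ and leaves the right identity to the reader.
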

\begin{proof} The multiplication of $\a,\b$  with respect to $\mu_{R}$ follows from the one of $\a,\b$  with respect to $\mu$
  and the hypothesis $R\circ\a=\a\circ R,~ R\circ\b=\b\circ R$.
To prove the left and right  BiHom-alternative identities, let denote by
$as_{\alpha,\beta, \mu_R}$ the BiHom-assoiator in $A_R$ and $as_{\alpha,\beta}$
the one in the BiHom-algebra $(A, \mu, \a,\b)$. Then for all $x,y\in A,$
we have:
$$\begin{array}{llllll}
as_{\alpha,\beta,\mu_{R}}(\beta(x),\alpha(x),y)&=&\mu_{R}(\mu_{R}(\beta(x),\alpha(x)),\beta(y))-\mu_{R}(\alpha\beta(x),\mu_{R}(\alpha(x),y))\\
&=&\mu_{R}(\mu(\beta R(x),\alpha(x))+\mu(\beta(x),\alpha R(x)),\beta(y)))\\&&-\mu_{R}(\alpha\beta(x),\mu(\alpha R(x),y)+\mu(\alpha(x),R(y)))\\
&=&\mu(R(\mu(\beta R(x),\alpha(x))+\mu(\beta(x),\alpha R(x)),\beta(y))\\&&+\mu(\mu(\beta R(x),\alpha(x))+\mu(\beta(x),\alpha R(x)),\beta R(y)))\\
&&-\mu(\alpha\beta R(x),\mu(\alpha R(x),y)+\mu(\alpha(x),R(y)))\\&&-\mu(\alpha\beta(x),R(\mu(\alpha R(x),y)+\mu(\alpha(x),R(y))))\\
&=&\mu(\mu(\beta R(x),\alpha R(x)),\beta(y))+\mu(\mu(\beta R(x),\alpha(x)),\beta R(y))\\&&+\mu(\mu(\beta(x),\alpha R(x)),\beta R(y))
-\mu(\alpha\beta R(x),\mu(\alpha R(x),y))\\&&-\mu(\alpha\beta R(x),\mu(\alpha(x),R(y)))-\mu(\alpha\beta(x),\mu(\alpha (R(x)),R(y)))\\
&=&as_{\alpha,\beta}(\beta (R(x)),\alpha (R(x)),y)+as_{\alpha,\beta}(\beta (R(x)),\alpha (x),R(y))\\&&+as_{\alpha,\beta}(\beta (x),\alpha (R(x)),R(y))=0+0=0,
~~(\textsl{by ~(\ref{sa1})~and~(\ref{is1})}).
\end{array}$$
In the same way, the right BiHom-alternative identities $as_{\alpha,\beta,\mu_{R}}(x,\beta(y),\alpha(y))=0$.
Hence, $A_R=(A, \mu_{R}, \a,\b, R)$ is a  BiHom-alternative algebra.
\end{proof}
\subsection{Constructions of BiHom-Jordan algebras}
\begin{prop}
Let $(A,\mu,\alpha,\beta)$ be a BiHom-Jordan algebra and $I$ be a two-sided BiHom-ideal of $(A,\mu,\alpha,\beta).$ Then
$(A/I,\overline{\mu},\overline{\alpha},\overline{\beta})$ is a BiHom-Jordan algebra  where $\overline{\mu}(\overline{x},\overline{y})=\overline{\mu(x,y)},$
$\overline{\alpha}(\overline{x})=\overline{\alpha(x)}$ and $\overline{\beta}(\overline{x})=\overline{\beta(x)}$
for all $\overline{x},\overline{y}\in A/I.$
\end{prop}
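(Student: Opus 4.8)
The plan is to imitate the proof of the corresponding statement for BiHom-alternative algebras given above, exploiting the fact that the canonical projection $\pi\colon A\to A/I$ is a surjective BiHom-algebra morphism. First I would check that $\overline{\mu}$, $\overline{\alpha}$ and $\overline{\beta}$ are well defined. Since $I$ is a two-sided BiHom-ideal of $A$, we have $\mu(a,b)\in I$ and $\mu(b,a)\in I$ whenever $b\in I$, and also $\alpha(I)\subseteq I$, $\beta(I)\subseteq I$; hence the formulas $\overline{\mu}(\overline x,\overline y)=\overline{\mu(x,y)}$, $\overline{\alpha}(\overline x)=\overline{\alpha(x)}$, $\overline{\beta}(\overline x)=\overline{\beta(x)}$ do not depend on the chosen representatives. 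The condition $\overline{\alpha}\,\overline{\beta}=\overline{\beta}\,\overline{\alpha}$ and the multiplicativity $\overline{\alpha}\circ\overline{\mu}=\overline{\mu}\circ\overline{\alpha}^{\otimes 2}$, $\overline{\beta}\circ\overline{\mu}=\overline{\mu}\circ\overline{\beta}^{\otimes 2}$ then follow at once by applying $\pi$ to the corresponding identities in $A$, exactly as in the BiHom-alternative case.

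Next I would verify the two defining conditions of Definition \ref{BiHom jordan}. For the BiHom-commutativity condition, pick $\overline x,\overline y\in A/I$ and compute
$$\overline{\mu}\big(\overline{\beta}(\overline x),\overline{\alpha}(\overline y)\big)=\overline{\mu(\beta(x),\alpha(y))}=\overline{\mu(\beta(y),\alpha(x))}=\overline{\mu}\big(\overline{\beta}(\overline y),\overline{\alpha}(\overline x)\big),$$
using the BiHom-commutativity of $\mu$ in $A$. For the BiHom-Jordan identity, the key observation is that the BiHom-associator is compatible with the projection, namely $as_{\overline{\alpha},\overline{\beta}}(\overline x,\overline y,\overline z)=\overline{as_{\alpha,\beta}(x,y,z)}$, since $\overline{\mu},\overline{\alpha},\overline{\beta}$ are the images under the multiplicative map $\pi$ of $\mu,\alpha,\beta$. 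Iterating this remark (every composite of $\mu,\alpha,\beta$ inside the nested product passes to the quotient termwise), we get
$$as_{\overline{\alpha},\overline{\beta}}\Big(\overline{\mu}\big(\overline{\beta}^{2}(\overline x),\overline{\alpha}\,\overline{\beta}(\overline x)\big),\overline{\alpha}^{2}\overline{\beta}(\overline y),\overline{\alpha}^{3}(\overline x)\Big)=\overline{as_{\alpha,\beta}(\mu(\beta^{2}(x),\alpha\beta(x)),\alpha^{2}\beta(y),\alpha^{3}(x))}=\overline 0,$$
the last equality being the BiHom-Jordan identity in $A$.

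Hence both conditions of Definition \ref{BiHom jordan} hold in $(A/I,\overline{\mu},\overline{\alpha},\overline{\beta})$, and it is a BiHom-Jordan algebra. I do not expect any real obstacle here: the only point demanding a little care is the bookkeeping that $\pi$ intertwines all the structure maps, so that the argument of $as_{\alpha,\beta}$ and the iterated Jordan product descend coordinate by coordinate to the quotient; once this is noted, the claim reduces verbatim to identities already valid in $A$.
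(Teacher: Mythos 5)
Your proposal is correct and follows essentially the same route as the paper: pass multiplicativity and BiHom-commutativity to the quotient and use that the BiHom-associator is compatible with the projection, so the BiHom-Jordan identity in $A/I$ reduces to the one in $A$. The extra check of well-definedness of $\overline{\mu},\overline{\alpha},\overline{\beta}$ is a harmless (and welcome) addition that the paper leaves implicit.
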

\begin{proof} The multiplicativity  and the BiHom-Commutativity in $(A/I,\overline{\mu},\overline{\alpha},\overline{\beta})$ follow from those in
$(A,\mu,\alpha,\beta).$ Next, by a direct computation, we get for $\forall\ \overline{x},\overline{y}\in A/I:$
\begin{eqnarray}
&&as_{\overline{\alpha},\overline{\beta}}(\overline{\mu}(\overline{\beta}^2(\overline{x}),\overline{\alpha}\overline{\beta}(\overline{x})),
\overline{\alpha}^2\overline{\beta}(\overline{y}),\overline{\alpha}^3(\overline{x}))
=\overline{as_{\alpha,\beta}(\mu(\beta^2(x),\alpha\beta(x)),\alpha^2\beta(y),\alpha^3(x))}\nonumber
\end{eqnarray}
Thus the BiHom-Jordan identity in $(A/I,\overline{\mu},\overline{\alpha},\overline{\beta})$ follows from the one in $(A,\mu,\alpha,\beta).$
\end{proof}
 \begin{prop}
 Given two BiHom-Jordan algebras $(A,\mu_A,\alpha_A,\beta_A)$ and
 $(B,\mu_B,\alpha_B ,\beta_B),$ there is a BiHom-Jordan algebra
 $(A\oplus B, \mu, \alpha, \beta) $ where the bilinear map
 $\mu :(A\oplus B)^{\times 2}\longrightarrow A\oplus B$ is given by
 $$\mu(a_1+b_1,a_2+b_2)=\mu_A(a_1,a_2)+\mu_B(a_2,b_2), \mbox{  $\forall \  (a_1,a_2)\in A^{\times 2}$\ and \  $\forall\   (b_1,b_2)\in B^{\times 2}$}$$
 and the linear maps $\alpha,\beta \ : A\oplus B\longrightarrow A\oplus B$ are given by
 \begin{eqnarray}
 \alpha(a+b)&=& \alpha_A(a)+\alpha_B(b),\nonumber\\
 \beta(a+b)&=& \beta_A(a)+\beta_B(b),
 \mbox{  $\forall \  (a,b)\in A\times B$}.\nonumber
 \end{eqnarray}
 \end{prop}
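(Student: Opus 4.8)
The plan is to follow the pattern of the proof of Proposition~\ref{bylv}: every piece of structure on $A\oplus B$ is the ``diagonal'' of the corresponding pieces on the two factors, so the verification splits, summand by summand, into the analogous identities in $A$ and in $B$ with no cross terms. Write a generic element of $A\oplus B$ as $a+b$ with $a\in A$, $b\in B$, and take $\mu=\mu_A\oplus\mu_B$ (the direct-sum product used in Proposition~\ref{bylv}). First I would clear the bookkeeping: $\alpha\beta=\beta\alpha$ on $A\oplus B$ because $\alpha_A\beta_A=\beta_A\alpha_A$ and $\alpha_B\beta_B=\beta_B\alpha_B$; and $\alpha,\beta$ are multiplicative for $\mu$ because each of $\alpha_A,\beta_A$ (resp.\ $\alpha_B,\beta_B$) is multiplicative for $\mu_A$ (resp.\ $\mu_B$), by the same componentwise computation as in Proposition~\ref{bylv}.

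Next I would check BiHom-commutativity: for $x=a_1+b_1$ and $y=a_2+b_2$ one has $\mu(\beta(x),\alpha(y))=\mu_A(\beta_A(a_1),\alpha_A(a_2))+\mu_B(\beta_B(b_1),\alpha_B(b_2))$, and applying the BiHom-commutativity of $A$ and of $B$ to the two summands (and reading the resulting equality backwards) gives $\mu(\beta(x),\alpha(y))=\mu(\beta(y),\alpha(x))$. The key remark for the last identity is that, because $\mu,\alpha,\beta$ are direct sums of the corresponding maps on $A$ and $B$, so is the BiHom-associator: for $u=u_A+u_B$, $v=v_A+v_B$, $w=w_A+w_B$,
$$as_{\alpha,\beta}(u,v,w)=as_{\alpha_A,\beta_A}(u_A,v_A,w_A)+as_{\alpha_B,\beta_B}(u_B,v_B,w_B),$$
and likewise every composite $\beta^2,\alpha\beta,\alpha^2\beta,\alpha^3$ and every nested product occurring in the BiHom-Jordan identity splits along $A\oplus B$.

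Finally, for the BiHom-Jordan identity I would evaluate its left-hand side at $x=a_1+b_1$, $y=a_2+b_2$. By the splitting just recorded, $\mu(\beta^2(x),\alpha\beta(x))$ has $A$-component $\mu_A(\beta_A^2(a_1),\alpha_A\beta_A(a_1))$ and $B$-component $\mu_B(\beta_B^2(b_1),\alpha_B\beta_B(b_1))$, while $\alpha^2\beta(y)$ and $\alpha^3(x)$ split componentwise in the obvious way; applying the associator-splitting once more,
$$as_{\alpha,\beta}\!\big(\mu(\beta^2(x),\alpha\beta(x)),\,\alpha^2\beta(y),\,\alpha^3(x)\big)$$
equals the sum of the left-hand side of the BiHom-Jordan identity of $A$ evaluated at $(a_1,a_2)$ and that of $B$ evaluated at $(b_1,b_2)$, hence $0+0=0$. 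I do not anticipate a genuine obstacle: the only thing to handle carefully is carrying the several twisting composites through the direct-sum decomposition, which is unambiguous thanks to the factorwise commutativity $\alpha\beta=\beta\alpha$; everything else is mechanical. (The same splitting also disposes of the cyclic form of the Jordan identity given in the remark following its definition, should one prefer that version.)
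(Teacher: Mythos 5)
Your proposal is correct and follows essentially the same route as the paper: multiplicativity and BiHom-commutativity are checked componentwise, and the BiHom-associator (hence the BiHom-Jordan identity) splits as the direct sum of the associators of $A$ and $B$, giving $0+0=0$. You also correctly read the product as $\mu_A\oplus\mu_B$ despite the typo in the statement, which is exactly what the paper's own computation uses.
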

\begin{proof}Note that the Multiplicativity and BiHom-commutativity in $(A\oplus B, \mu, \alpha, \beta)$ follows from the those in $(A,\mu_A,\alpha_A,\beta_A)$ and
 $(B,\mu_B,\alpha_B ,\beta_B)$. Next, by a direct computation, we get for all $(x_1,y_1)\in A^{\times 2}$ and $(x_2,y_2)\in B^{\times 2}$:
 \begin{eqnarray}
 &&as_{\alpha,\beta}(\mu(\beta^2(x_1+y_1),\alpha\beta(x_1+y_1)),\alpha^2\beta(x_2+y_2),\alpha^3(x_1+y_1))\nonumber\\
 &&=as_{\alpha_A,\beta_A}(\mu_A(\beta_A^2(x_1),\alpha_A\beta_A(x_1)),\alpha_A^2\beta_A(y_1),\alpha_A^3(x_1))\nonumber\\
 &&+as_{\alpha_B,\beta_B}(\mu_B(\beta_B^2(x_2),\alpha_B\beta_B(x_2)),\alpha_A^2\beta_A(y_2),\alpha_B^3(x_2))\nonumber
 \end{eqnarray}
 Thus the BiHom-Jordan identity in $(A\oplus B, \mu, \alpha, \beta)$ follows from the one in $(A,\mu_A,\alpha_A,\beta_A)$ and
 $(B,\mu_B,\alpha_B ,\beta_B).$\end{proof}
 \begin{prop}
 A linear map $\phi: (A,\mu_A,\alpha_A,\beta_A)\longrightarrow (B,\mu_B,\alpha_B ,\beta_B)$ is a morphism of BiHom-Jordan algebras if and only if its
  graph of $\Gamma_{\phi}\subseteq A\oplus B $ is a BiHom-subalgebra of
  $(A\oplus B, \mu=\mu_A+\mu_B, \alpha=\alpha_A+\alpha_B, \beta=\beta_A+\beta_B).$
 \end{prop}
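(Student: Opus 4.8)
The plan is to imitate the argument already used for the analogous statement about BiHom-alternative algebras, since the graph characterization is insensitive to which particular identities (associativity, alternativity, BiHom-commutativity, BiHom-Jordan identity) the algebras satisfy; only closure under the three structure maps is at stake. First I would record that $(A\oplus B,\mu,\alpha,\beta)$ with $\mu=\mu_A+\mu_B$, $\alpha=\alpha_A+\alpha_B$, $\beta=\beta_A+\beta_B$ is indeed a BiHom-algebra (in fact a BiHom-Jordan algebra, by the preceding proposition), so that the notion of BiHom-subalgebra applies here.

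For the forward implication, assume $\phi$ is a morphism of BiHom-Jordan algebras, so that $\phi\circ\mu_A=\mu_B\circ(\phi\otimes\phi)$, $\phi\circ\alpha_A=\alpha_B\circ\phi$ and $\phi\circ\beta_A=\beta_B\circ\phi$. Then for all $u,v\in A$,
$$\mu\big(u+\phi(u),\,v+\phi(v)\big)=\mu_A(u,v)+\mu_B(\phi(u),\phi(v))=\mu_A(u,v)+\phi(\mu_A(u,v))\in\Gamma_{\phi},$$
and likewise $\alpha(u+\phi(u))=\alpha_A(u)+\alpha_B(\phi(u))=\alpha_A(u)+\phi(\alpha_A(u))\in\Gamma_{\phi}$ and $\beta(u+\phi(u))=\beta_A(u)+\phi(\beta_A(u))\in\Gamma_{\phi}$. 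Hence $\Gamma_{\phi}$ is closed under $\mu$, $\alpha$ and $\beta$, i.e. it is a BiHom-subalgebra of $(A\oplus B,\mu,\alpha,\beta)$.

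Conversely, suppose $\Gamma_{\phi}$ is a BiHom-subalgebra. Closure under $\mu$ gives, for all $u,v\in A$, that $\mu_A(u,v)+\mu_B(\phi(u),\phi(v))\in\Gamma_{\phi}$; since $\Gamma_{\phi}=\{x+\phi(x):x\in A\}$, an element $a+b$ with $a\in A$, $b\in B$ lies in $\Gamma_{\phi}$ exactly when $b=\phi(a)$, so the $B$-component must be $\phi$ applied to the $A$-component, whence $\mu_B(\phi(u),\phi(v))=\phi(\mu_A(u,v))$. Reading closure under $\alpha$ and under $\beta$ the same way forces $\alpha_B(\phi(u))=\phi(\alpha_A(u))$ and $\beta_B(\phi(u))=\phi(\beta_A(u))$ for all $u\in A$. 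Therefore $\phi$ is a morphism of BiHom-algebras, hence of BiHom-Jordan algebras.

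The only mildly delicate point is the bookkeeping in the converse: one must use that the first projection $A\oplus B\to A$ restricts to a bijection $\Gamma_{\phi}\to A$ with inverse $x\mapsto x+\phi(x)$, so that uniqueness of components lets one extract the identities $\mu_B(\phi(u),\phi(v))=\phi(\mu_A(u,v))$, $\alpha_B\phi=\phi\alpha_A$, $\beta_B\phi=\phi\beta_A$ from the mere membership statements. I do not anticipate any genuine obstacle; the BiHom-commutativity and the BiHom-Jordan identity are never invoked, exactly as alternativity was irrelevant in the corresponding proposition for BiHom-alternative algebras.
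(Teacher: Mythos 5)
Your proposal is correct and follows essentially the same route as the paper's proof: show closure of $\Gamma_{\phi}$ under $\mu$, $\alpha$, $\beta$ from the morphism identities, and conversely read off $\mu_B(\phi(u),\phi(v))=\phi(\mu_A(u,v))$, $\alpha_B\phi=\phi\alpha_A$, $\beta_B\phi=\phi\beta_A$ from membership in the graph, without ever invoking the BiHom-commutativity or BiHom-Jordan identity. Your explicit remark that an element $a+b$ lies in $\Gamma_{\phi}$ exactly when $b=\phi(a)$ is the same bookkeeping the paper uses (and in fact states it more carefully than the paper, whose converse contains a typographical slip writing $\alpha_B\phi=\alpha_A\phi$ instead of $\alpha_B\phi=\phi\alpha_A$).
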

\begin{proof}Suppose that  $\phi: (A,\mu_A,\alpha_A,\beta_A)\longrightarrow (B,\mu_B,\alpha_B ,\beta_B)$ is a morphism of BiHom-Jordan algebras. Then for all $(a, b)\in A^{\times 2}:$
 \begin{eqnarray}
 \mu(a+\phi(a),b+\phi(b))&=&\mu_A(a,b)+\mu_B(\phi(a),\phi(b))=
 \mu_A(a,b)+\phi\mu_A(a,b).\nonumber
 \end{eqnarray}
 Thus $\Gamma_{\phi}$ is closed under the bilinear map $\mu.$ Furthermore, since $\phi\alpha_A=\alpha_B\phi$ and $\phi\beta_A=\beta_B\phi$ we have
 \begin{eqnarray}
&& \alpha(a+\phi(a))=\alpha_A(a)+\alpha_B(\phi(a))=\alpha_A(a)+\phi\alpha_A(a)
\nonumber\\
&& \beta(a+\phi(a))=\beta_A(a)+\beta_B(\phi(a))=\beta_A(a)+\phi\beta_A(a)
\nonumber
 \end{eqnarray}
 which imply that $\Gamma$ is closed under $\alpha$ and $\beta.$ It follows that $\Gamma_{\phi} $ is a BiHom-subalgebra of\\
  $(A\oplus B, \mu, \alpha, \beta).$
  Conversely suppose that that $\Gamma_{\phi} $ is a BiHom-subalgebra of\\
  $(A\oplus B, \mu, \alpha, \beta).$ Then
  \begin{eqnarray}
  &&\mu(a+\phi(a),b+\phi(a))=\mu_A(a,b)+\mu_B(\phi(a),\phi(b))\in\Gamma_{\phi}
  \nonumber
  \end{eqnarray}
  which implies that $\mu_b(\phi(a),\phi(b))=\phi\mu_A(a,b).$ Furthermore
  $\alpha(\Gamma_{\phi})\subseteq \Gamma_{\phi}$ and
  $\beta(\Gamma_{\phi})\subseteq \Gamma_{\phi}$ imply
  \begin{eqnarray}
  \alpha(a+\phi(a))=\alpha_A(a)+\alpha_B(\phi(a))\in\Gamma_{\phi}
  \mbox{ and }\nonumber\\
  \beta(a+\phi(a))=\beta_A(a)+\beta_B(\phi(a))\in\Gamma_{\phi}
  \nonumber
  \end{eqnarray}
  which are equivalent to conditions $\alpha_B\phi(a)=\alpha_A\phi(a)$ and
  $\beta_B\phi(a)=\beta_A\phi(a)$ that is
  $\alpha_B\phi=\alpha_A\phi$ and
  $\beta_B\phi=\beta_A\phi.$ Therefore,
  $\phi: (A,\mu_A,\alpha_A,\beta_A)\longrightarrow (B,\mu_B,\alpha_B ,\beta_B)$ is a morphism of BiHom-Jordan algebras.\end{proof}
\begin{prop}\label{assoJord}
Let $(A,\mu, \alpha,\beta)$ be a regular BiHom-associative algebra. Then $A^+=(A,\mu',\alpha,\beta)$ is a BiHom-Jordan algebra where
$\mu'(x, y)= \mu(x, y)+\mu(\alpha^{-1}\beta(y),\beta^{-1}\alpha(x)),$ for all $x,y\in A.$
\end{prop}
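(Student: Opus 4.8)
The plan is to verify the two defining axioms of a BiHom-Jordan algebra (Definition \ref{BiHom jordan}) for $A^+=(A,\mu',\alpha,\beta)$, after first recording that $\alpha$ and $\beta$ are still commuting multiplicative maps for the new product $\mu'$. Since $(A,\mu,\alpha,\beta)$ is regular, $\alpha,\beta$ are automorphisms; so $\alpha^{-1},\beta^{-1}$ are also algebra morphisms, and $\alpha^{-1}\beta = \beta\alpha^{-1}$, $\beta^{-1}\alpha=\alpha\beta^{-1}$, etc. A short computation shows $\alpha\mu'(x,y)=\mu(\alpha(x),\alpha(y))+\mu(\alpha(\alpha^{-1}\beta(y)),\alpha(\beta^{-1}\alpha(x)))=\mu'(\alpha(x),\alpha(y))$, using multiplicativity of $\alpha$ with respect to $\mu$ and commutativity of the maps; similarly for $\beta$. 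Thus $A^+$ is a multiplicative BiHom-algebra.

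Next I would check the BiHom-commutativity condition $\mu'(\beta(x),\alpha(y))=\mu'(\beta(y),\alpha(x))$. Expanding the left side gives $\mu(\beta(x),\alpha(y))+\mu(\alpha^{-1}\beta\alpha(y),\beta^{-1}\alpha\beta(x))=\mu(\beta(x),\alpha(y))+\mu(\beta(y),\alpha(x))$, where I used $\alpha^{-1}\beta\alpha=\beta$ and $\beta^{-1}\alpha\beta=\alpha$ (these follow from $\alpha\beta=\beta\alpha$). This expression is visibly symmetric in $x$ and $y$, so the condition holds. The point is that $\mu'$ is precisely the ``BiHom-symmetrization'' of $\mu$, built so that commutativity is automatic.

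The main work — and the main obstacle — is the BiHom-Jordan identity
$$as_{\alpha,\beta}\big(\mu'(\beta^2(x),\alpha\beta(x)),\alpha^2\beta(y),\alpha^3(x)\big)=0,$$
where now $as_{\alpha,\beta}$ must be taken with respect to $\mu'$. The strategy is to expand everything in terms of the original product $\mu$: write $z:=\mu'(\beta^2(x),\alpha\beta(x))$, expand $as_{\alpha,\beta}^{\mu'}(z,\alpha^2\beta(y),\alpha^3(x))$ into four $\mu$-terms, then substitute the expansion of $z$, producing eight $\mu$-words of length three in the variables $x,y$ (with various powers of $\alpha,\beta$ and their inverses applied). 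Then I would repeatedly use BiHom-associativity \eqref{BiHom ass identity} of $\mu$ to re-associate each word into a normal form, and the commutativity relations among $\alpha,\beta,\alpha^{-1},\beta^{-1}$ to line up the twists. One should find that the eight terms cancel in pairs. This mirrors the classical fact that a commutative associative algebra is Jordan (indeed associative $\Rightarrow$ Jordan), and in the Hom-setting the analogue $A^+$ of a Hom-associative algebra is Hom-Jordan; the BiHom bookkeeping of which of $\alpha,\beta$ is applied where is the only genuinely delicate part. I would organize the computation by first proving the linearized/auxiliary identity $as_{\alpha,\beta}^{\mu'}(\mu'(\beta^2(x),\alpha\beta(w)),\alpha^2\beta(y),\alpha^3(z))$ summed cyclically over $x,w,z$ vanishes (the form in the Remark following Definition \ref{BiHom jordan}), since the cyclic symmetry makes the pairwise cancellations more transparent, and then specialize $w=z=x$.
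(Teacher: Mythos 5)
Your proposal is correct and takes essentially the same route as the paper: after verifying multiplicativity and BiHom-commutativity exactly as you do (via $\alpha^{-1}\beta\alpha=\beta$, $\beta^{-1}\alpha\beta=\alpha$), the paper establishes the BiHom-Jordan identity by precisely the expansion you outline — since $\mu'(\beta^2(x),\alpha\beta(x))=2\mu(\beta^2(x),\alpha\beta(x))$, the $\mu'$-associator unfolds into eight $\mu$-words which the paper groups into four differences, each annihilated by (possibly repeated) use of the BiHom-associativity of $\mu$ together with the commutation relations among $\alpha^{\pm1},\beta^{\pm1}$. The only parts not executed in your sketch are this routine pairwise cancellation (which does go through as you claim) and your optional detour through the linearized cyclic identity, which the paper skips in favour of the direct computation.
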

\begin{proof}
It is easy to see that, $\forall x, y\in A$

$\begin{array}{lllll}
&&\alpha\circ\mu'(x,y)=\mu'(\alpha(x),\alpha(y))\\
&&\beta\circ\mu'(x,y)=\mu'(\beta(x),\beta(y)),
\end{array}$

and

$$\begin{array}{lllllll}
\mu'(\beta(x),\alpha(y))&=&\mu(\beta(x),\alpha(y))+\mu(\alpha^{-1}\beta(\alpha(y)),\alpha\beta^{-1}(\beta(x))\\
&=&\mu(\alpha^{-1}\beta(\alpha(x)),\beta^{-1}\alpha(\beta(y))+\mu(\beta(y),\alpha(x))~(\textsl{by~BiHom-commutativity ~of~} \mu)\\
&=&\mu'(\beta(y),\alpha(x))
\end{array}$$
Now, we prove the BiHom-Jordan identity, let denote the BiHom-assoiator in
$A^+$ by $as_{\alpha,\beta,\mu'}.$ Then for all $x,y\in A,$ we have:
$$\begin{array}{llll}
&&as_{\alpha,\beta,\mu'}(\mu'(\beta^{2}(x),\alpha\beta(x)),\alpha^{2}\beta(y),\alpha^{3}(x))\\
&=&\mu'(\mu'(\mu'(\beta^{2}(x),\alpha\beta(x)),\alpha^{2}\beta(y)),\alpha^{3}\beta(x))
-\mu'(\mu'(\alpha\beta^{2}(x),\alpha^{2}\beta(x)),\mu'(\alpha^{2}\beta(y),\alpha^{3}(x)))\\
&=&\mu'(\mu'(\mu(\beta^{2}(x),\alpha\beta(x))+\mu(\beta^{2}(x),\alpha\beta(x)),\alpha^{2}\beta(y)),\alpha^{2}\beta(x))\\
&&-\mu'(\mu(\alpha\beta^{2}(x),\alpha^{2}\beta(x))+\mu(\alpha\beta^{2}(x),\alpha^{2}\beta(x)),\mu(\alpha^{2}\beta(y),\alpha^{3}(x))+\mu(\alpha^{2}\beta(x)),\alpha^{3}(y)))\\
&=&2\mu'(\mu(\mu(\beta^{2}(x),\alpha\beta(x)),\alpha^{2}\beta(y))+\mu(\alpha\beta^{2}(y),\mu(\alpha\beta(x),\alpha^{2}(x))),\alpha^{3}\beta(x))\\
&&-2\mu'(\mu(\alpha\beta^{2}(x),\alpha^{2}\beta(x)),\mu(\alpha^{2}\beta(y),\alpha^{3}(x))-2\mu'(\mu(\alpha\beta^{2}(x),\alpha^{2}\beta(x)),\mu(\alpha^{2}\beta(x),\alpha^{3}(y))\\
&=&2\mu(\mu(\mu(\beta^{2}(x),\alpha\beta(x)),\alpha^{2}\beta(y)),\alpha^{3}\beta(x))
+2\mu(\alpha^{2}\beta^{2}(x),\mu(\mu(\alpha\beta(x),\alpha^{2}(x)),\alpha^{3}(y)))\\
&&+2\mu(\mu(\alpha\beta^{2}(y),\mu(\alpha\beta(x),\alpha^{2}(x))),\alpha^{3}\beta(x))+2\mu(\alpha^{2}\beta^{2}(x),\mu(\alpha^{2}\beta(y),\mu(\alpha^{2}(x),\beta^{-1}\alpha^{3}(x)))\\
&&-2\mu(\mu(\alpha\beta^{2}(x),\alpha^{2}\beta(x)),\mu(\alpha^{2}\beta(y),\alpha^{3}(x)))
-2\mu(\mu(\alpha\beta^{2}(y),\alpha^{2}\beta(x)),\mu(\alpha^{2}\beta(x),\alpha^{3}(x)))\\
&&-2\mu(\mu(\alpha\beta^{2}(x),\alpha^{2}\beta(x)),\mu(\alpha^{2}\beta(x),\alpha^{3}(y)))
-2\mu(\mu(\alpha\beta^{2}(x),\alpha^{2}\beta(y)),\mu(\alpha^{2}\beta(x),\alpha^{3}(x)))\\
&=&2\Big\{\mu(\mu(\mu(\beta^{2}(x),\alpha\beta(x)),\alpha^{2}\beta(y)),\alpha^{3}\beta(x))-\mu(\mu(\alpha\beta^{2}(x),\alpha^{2}\beta(x)),\mu(\alpha^{2}\beta(y),\alpha^{3}(x)))\Big\}\\
&&+2\Big\{\mu(\alpha^{2}\beta^{2}(x),\mu(\mu(\alpha\beta(x),\alpha^{2}(x)),\alpha^{3}(y)))-\mu(\mu(\alpha\beta^{2}(x),\alpha^{2}\beta(x)),\mu(\alpha^{2}\beta(x),\alpha^{3}(y)))\Big\}\\
&&+2\Big\{\mu(\mu(\alpha\beta^{2}(y),\mu(\alpha\beta(x),\alpha^{2}(x))),\alpha^{3}\beta(x))-\mu(\mu(\alpha\beta^{2}(y),\alpha^{2}\beta(x)),\mu(\alpha^{2}\beta(x),\alpha^{3}(x)))\Big\}\\
&&+2\Big\{\mu(\alpha^{2}\beta^{2}(x),\mu(\alpha^{2}\beta(y),\mu(\alpha^{2}(x),\beta^{-1}\alpha^{3}(x)))-\mu(\mu(\alpha\beta^{2}(x),\alpha^{2}\beta(y)),\mu(\alpha^{2}\beta(x),\alpha^{3}(x)))\Big\}\\
&=&0 ~~(\textsl{by~BiHom-associativity~condition~of}~\mu).
\end{array}$$
\end{proof}
\begin{ex} \label{e5}
Consider the 2-dimensional BiHom-associative algebras $(A,\mu,\alpha,\beta)$ with a basis $\{e_1,e_2\},$ the maps $\alpha$, $\beta $ and the multiplication $\mu$ are def\/ined by
$$\begin{array}{lllllll}
  \alpha(e_1)=e_1, && \alpha(e_2)=\frac{2 a}{b-1}e_1-e_2, \\
  \beta(e_1)=e_1, && \beta(e_2)=-ae_1+b e_2, \\
  \mu(e_1,e_1)=e_1, && \mu(e_1,e_2)= -ae_1+b e_2, \\
  \mu(e_2,e_1)=\frac{2 a}{b-1}e_1-e_2, && \mu(e_2,e_2)=-\frac{
a^2(b-2)}{(b-1)^2}e_1+ae_2,
\end{array}$$
where $a,b$ are parameters in $\mathbb{K}$, with $b\neq 0,1$.

It is clear that $(A,\mu,\alpha,\beta)$ is regular and $\alpha^{-1}(e_{1})=-e_{1},~\alpha^{-1}(e_{2})=\frac{2a}{b-1}e_{1}+e_{2},~\beta^{-1}(e_{1})=\frac{1}{b}e_{1}$ and $\beta^{-1}(e_{2})=\frac{a}{b}e_{1}+e_{2}$. Then using the product $\mu'$ in Proposition \ref{assoJord}, the quadruple $A^+=(A,\mu',\alpha,\beta)$ is a BiHom-Jordan algebra where
$$\begin{array}{llllllll}
\mu'(e_{1},e_{1})&=&\frac{b-1}{b}e_{1},&&
\mu'(e_{1},e_{2})&=&a(-1+\frac{1}{b}+\frac{4}{b-1})e_{1}+be_{2},\\
\mu'(e_{2},e_{1})&=&a(1+\frac{1}{b})e_{1}+(b-1)e_{2},&&
\mu'(e_{2},e_{2})&=&\Big(\frac{a^{2}(3b^{2}+11b-8)}{(b-1)^{2}}+\frac{a^{2}(1+b)}{b}\Big)e_{1}-\frac{4ab^{2}}{b-1}e_{2}.
\end{array}$$
\end{ex}
\section{BiHom-alternative algebra and BiHom-Jordan bimodules}
In this section, we give the defnition of BiHom-alternative and BiHom-Jordan (bi)modules. we construct several
different bimodules of BiHom-alternative BiHom-Jordan algebras. It is also proved that a direct sum of a BiHom-Jordan algebra and a bimodule over this BiHom-algebra is a BiHom-Jordan
algebra called a split null extension of the considered BiHom-algebra. Furthermore, relations between
BiHom-associative bimodules, BiHom-alternative and BiHom-Jordan bimodules and some examples are given.
\subsection{BiHom-alternative bimodules}
In this subsection, we study the BiHom-alternative bimodules.
First, we start by the following definitions.

\begin{defn}
Let $(A,\mu,\alpha,\beta)$ be a BiHom-alternative algebra.
\begin{enumerate}
\item A left BiHom-alternative $A$-module is a BiHom-module $(V,\phi,\psi)$ with a structure map $\rho_l:A\otimes V\longrightarrow V$, $a\otimes v\rightarrow a\cdot v$ such that
$$\begin{array}{lll}
as_{V_{\phi,\psi}}(\beta(x),\alpha(y),v)=-as_{V_{\phi,\psi}}(\beta(y),\alpha(x),v),~~~~\textrm{for} ~\textrm{all} ~x, y\in A~ \textrm{and}~ v\in V.
\end{array}$$
\item A right BiHom-alternative $A$-module is a BiHom-module $(V,\phi,\psi)$ with a structure map $\rho_r:V\otimes A\longrightarrow V,~v\otimes a\rightarrow v\cdot$ a  such that
$$\begin{array}{lll}
as_{V_{\phi,\psi}}(v,\beta(x),\alpha(y))=-as_{V_{\phi,\psi}}(v,\beta(y),\alpha(x)),~~~~\textrm{for} ~\textrm{all} ~x, y\in A~ \textrm{and}~ v\in V.
\end{array}$$
\end{enumerate}
\end{defn}
Now, as a generalization of  bimodules over BiHom-alternative algebras, one has:
\begin{defn}
Let $(A,\mu,\alpha,\beta)$ be a BiHom-alternative algebra.\\
A BiHom-alternative $A$-bimodule \cite{chtioui2} is a BiHom-module $(V,\phi,\psi)$  with a (left) structure map  $\rho_l:A\otimes V\longrightarrow V,$ $a\otimes v\mapsto a\cdot v$ and a (right) structure map $\rho_r:V\otimes A\longrightarrow V,$ $v\otimes a\mapsto  v\cdot a$
such that the following equalities hold:
\begin{eqnarray}
&& as_{V_{\phi,\psi}}(\beta(x),\alpha(x),v)=0\mbox{  $\forall(x,v)\in A\times V$ }\label{0}\\
&&as_{V_{\phi,\psi}}(v,\beta(x),\alpha(x))=0 \mbox{  $\forall(x,v)\in A\times V$ }\label{1}\\
&&as_{V_{\phi,\psi}}(\beta(x),\phi(v),y)=-as_{V_{\phi,\psi}}(\psi(v), \alpha(x),y)
\mbox{  $\forall(x,y,v)\in A^{\times 2}\times V$ } \label{2}\\
&& as_{V_{\phi,\psi}}(y,\beta(x),\phi(v))=-as_{V_{\phi,\psi}}(y,\psi(v),\alpha(x)) \mbox{  $\forall(x,y,v)\in A^{\times 2}\times V$ } \label{ra3}
\end{eqnarray}
\end{defn}
\begin{rmk}\label{rem2}
If $\alpha = \beta=Id_{A}$  and $\phi = \psi=Id_{V}$, then $V$ is the so-called alternative bimodule
for the alternative algebra $(A, \mu)$ \cite{Jacob2}.
\end{rmk}
\begin{ex}
There are some examples of BiHom-alternative $A$-bimodules.
\begin{enumerate}
\item
Let $(A,\mu,\alpha,\beta)$ be a BiHom-alternative algebra. Then $(A,\alpha,\beta)$ is a BiHom-alternative $A$-bimodule where the structure maps are
$\rho_{l}(a,b)=\mu(a,b)$ and $\rho_{r}(a,b)=\mu(b,a)$. More generally, if $B$ is a two-sided BiHom-ideal of $(A,\mu,\alpha,\beta)$, then $(B,\alpha,\beta)$ is a BiHom-alternative
$A$-bimodule where the structure maps are $\rho_{l}(a,x)=\mu(a,x)=\mu(x,a)=\rho_{r}(x,a)$ for all $x\in B$ and $(a,b)\in A^{\times 2}$.
\item
If $(A,\mu)$ is an alternative algebra and $V$ is an alternative $A$-bimodule \cite{Jacob2} in the usual sense, then $(V,Id_{V},Id_{V})$ is a BiHom-alternative $\mathbb{A}$-bimodule where $\mathbb{A}=(A,\mu,Id_{A},
Id_{A})$ is a BiHom-alternative algebra.\end{enumerate}
\end{ex}
\begin{prop}
If $f:(A,\mu_{A},\alpha_{A},\beta_{A})\longrightarrow(B,\mu_{B},\alpha_{B},\beta_{B})$ is a surjective morphism of BiHom-alternative algebras, then $(B,\alpha_{B},\beta_{B})$ becomes a
BiHom-alternative $A$-bimodule via $f$, i.e, the structure maps are defined as $\rho_{l}(a,b)=\mu_{B}(f(a),b)$ and $\rho_{r}(b,a)=\mu_{B}(b,f(a))$ for all
$(a,b)\in A\times B$.
\end{prop}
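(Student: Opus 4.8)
The plan is to check directly that $(B,\alpha_B,\beta_B)$, equipped with $\rho_l(a,b)=\mu_B(f(a),b)$ and $\rho_r(b,a)=\mu_B(b,f(a))$, meets every requirement in the definition of a BiHom-alternative bimodule. First, $(B,\alpha_B,\beta_B)$ is a BiHom-module because $\alpha_B\beta_B=\beta_B\alpha_B$ holds in the BiHom-algebra $B$. That $\rho_l$ is a morphism of BiHom-modules will follow from the multiplicativity of $\alpha_B,\beta_B$ with respect to $\mu_B$ together with the intertwining relations $f\circ\alpha_A=\alpha_B\circ f$ and $f\circ\beta_A=\beta_B\circ f$; for instance $\rho_l(\alpha_A(a),\alpha_B(b))=\mu_B(f\alpha_A(a),\alpha_B(b))=\mu_B(\alpha_Bf(a),\alpha_B(b))=\alpha_B\mu_B(f(a),b)=\alpha_B\rho_l(a,b)$, and the compatibility with $\beta_B$, as well as the analogous statements for $\rho_r$, are obtained the same way. (Surjectivity of $f$ is not actually needed for any of this, but it is available.)

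The key reduction is to rewrite the module BiHom-associator of this candidate bimodule in terms of the BiHom-associator $as_{\alpha_B,\beta_B}$ of $B$ itself. Starting from the three defining formulas for $as_{V_{\phi,\psi}}$ and using $f\circ\mu_A=\mu_B\circ(f\otimes f)$ together with the two intertwining relations, a short computation yields, for all $x,y\in A$ and $v\in B$,
\begin{eqnarray*}
as_{B_{\alpha_B,\beta_B}}(x,y,v)&=&as_{\alpha_B,\beta_B}(f(x),f(y),v),\\
as_{B_{\alpha_B,\beta_B}}(x,v,y)&=&as_{\alpha_B,\beta_B}(f(x),v,f(y)),\\
as_{B_{\alpha_B,\beta_B}}(v,x,y)&=&as_{\alpha_B,\beta_B}(v,f(x),f(y)).
\end{eqnarray*}
For example, $as_{B_{\alpha_B,\beta_B}}(x,y,v)=\rho_l(\mu_A(x,y),\beta_B(v))-\rho_l(\alpha_A(x),\rho_l(y,v))=\mu_B(\mu_B(f(x),f(y)),\beta_B(v))-\mu_B(\alpha_Bf(x),\mu_B(f(y),v))=as_{\alpha_B,\beta_B}(f(x),f(y),v)$, and the other two slots are entirely similar.

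With these three identities in hand, each of the four axioms $(\ref{0})$--$(\ref{ra3})$ collapses onto an identity already available in $B$. Axiom $(\ref{0})$ becomes $as_{\alpha_B,\beta_B}(\beta_Bf(x),\alpha_Bf(x),v)=0$, which is $(\ref{is1})$ in $B$; axiom $(\ref{1})$ becomes $as_{\alpha_B,\beta_B}(v,\beta_Bf(x),\alpha_Bf(x))=0$, which is $(\ref{is2})$ in $B$; axiom $(\ref{2})$ becomes $as_{\alpha_B,\beta_B}(\beta_Bf(x),\alpha_B(v),f(y))+as_{\alpha_B,\beta_B}(\beta_B(v),\alpha_Bf(x),f(y))=0$, which is the left BiHom-alternative identity $(\ref{sa1})$ of $B$ evaluated at $f(x),v,f(y)$; and axiom $(\ref{ra3})$ becomes $as_{\alpha_B,\beta_B}(f(y),\beta_Bf(x),\alpha_B(v))+as_{\alpha_B,\beta_B}(f(y),\beta_B(v),\alpha_Bf(x))=0$, which is the right BiHom-alternative identity $(\ref{sa2})$ of $B$ evaluated at $f(y),f(x),v$. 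This proves $(B,\alpha_B,\beta_B)$ is a BiHom-alternative $A$-bimodule. There is no genuinely hard step: the only point that needs care is bookkeeping, namely recognizing in each axiom which of the three slots ($A\otimes A\otimes V$, $A\otimes V\otimes A$, $V\otimes A\otimes A$) the relevant module associator lives in, so that the correct one of the three displayed reductions is applied and the correct permutation of the corresponding BiHom-alternative identity of $B$ is invoked.
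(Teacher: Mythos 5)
Your proposal is correct, but it follows a genuinely different route from the paper's proof. The paper exploits the surjectivity hypothesis: for each $b\in B$ it picks a preimage $b'\in A$ with $f(b')=b$, rewrites every module associator as $f$ applied to a BiHom-associator of $A$ (e.g. $as_{\alpha_B,\beta_B}(\beta_A(a),\alpha_A(a),b)=f\circ as_{\alpha_A,\beta_A}(\beta_A(a),\alpha_A(a),b')$), and then invokes the identities (\ref{is1}), (\ref{is2}), (\ref{sa1}) in $A$ before pushing back through $f$. You instead never leave $B$: you reduce each of the three slots of the module associator to the BiHom-associator of $B$ with $f$ applied only to the $A$-entries, and then each axiom (\ref{0})--(\ref{ra3}) is an instance of (\ref{is1}), (\ref{is2}), (\ref{sa1}), (\ref{sa2}) in $B$ evaluated at elements of the form $f(x)$, $f(y)$, $v$. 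Your computations of the three reductions and the bookkeeping of which slot and which permutation of the alternative identities is needed are all accurate. What your approach buys is economy and generality: as you observe, surjectivity of $f$ is never used, so your argument proves the stronger statement that $(B,\alpha_B,\beta_B)$ is a BiHom-alternative $A$-bimodule for \emph{any} morphism $f$ of BiHom-alternative algebras; the paper's hypothesis of surjectivity is superfluous for its own conclusion as well, but its proof as written genuinely relies on lifting $b$ to $A$, so it does not exhibit this. The only (mild) caveat is that your route uses that the target $B$ is BiHom-alternative, which is part of the hypothesis here, whereas the paper's lifting argument only ever invokes the identities in the source $A$ -- a distinction that would matter only in a setting where the alternativity of $B$ were not assumed.
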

\begin{proof}First, we have
$$\begin{array}{lllll}
\alpha_{B}\rho_{l}&=&\alpha_{B}\mu_{B}(f\otimes Id_{B})\\
&=&\mu_{B}(\alpha_{B}\circ f\otimes\alpha_{B})\\
&=&\mu_{B}(f\circ\alpha_{A}\otimes \alpha_{B})\\
&=&\rho_{l}\circ(\alpha_{A}\otimes\alpha_{B})
\end{array}$$
 Similarly, we get that $\beta_{B}\rho_l=\rho_l\circ(\beta_{A}\otimes\beta_{B}),~\alpha_{B}\rho_r=\rho_r\circ(\alpha_{B}\otimes\alpha_{A})$ and $\beta_{B}\rho_r=\rho_r\circ(\beta_{B}\otimes\beta_{A})$.
Thus $\rho_l$ and $\rho_r$ are morphisms of BiHom-modules.

Next, for any elements $a\in A$ and $b\in B$, then there exist $b'\in A$ such that $f(b')=b$ (i.e. $f$ is surjective), and we have

$$\begin{array}{lllll}
&&as_{\alpha_{B},\beta_{B}}(\beta_{A}(a),\alpha_{A}(a),b)=as_{\alpha_{B},\beta_{B}}(\beta_{A}(a),\alpha_{A}(a),f(b'))\\
&=&\rho_{l}(\mu_{A}(\beta_{A}(a),\alpha_{A}(a)),\beta_{B} f(b'))-\rho_{l}(\alpha_{A}\beta_{A}(a),\rho_{l}(\alpha_{A}(a),f(b')))\\
&=&\mu_{B}(\mu_{B}(f\beta_{A}(a),f(\alpha_{A}(a))),\beta_{B} f(b'))-\mu_{B}(f(\alpha_{A}\beta_{A}(a)),\mu(f(\alpha_{A}(a))),f(b'))\\
&=&\mu_{B}(f(\mu_{A}(\beta_{A}(a),\alpha_{A}(a))),f(\beta_{A}(b')))-\mu_{B}(f(\alpha_{A}\beta_{A}(a)),f(\mu_{A}(\alpha_{A}(a),b')))\\
&=&f(\mu_{A}(\mu_{A}(\beta_{A}(a),\alpha_{A}(a)),\beta_{A}(b')))-f(\mu_{A}(\alpha_{A}\beta_{A}(a),\mu_{A}(\alpha_{A}(a),b')))\\
&=&f\circ as_{\alpha_{A},\beta_{A}}(\beta_{A}(a),\alpha_{A}(a),b')=0~~(by~(\textsl{\ref{is1})})
\end{array}$$
$$\begin{array}{lllll}&&as_{\alpha_{B},\beta_{B}}(b,\beta_{A}(a),\alpha_{A}(a))=as_{\alpha_{B},\beta_{B}}(f(b'),\beta_{A}(a),\alpha_{A}(a))\\
&=&\rho_{r}(\rho_{r}(f(b'),\beta_{A}(a)),\alpha_{A}\beta_{A}(a))-\rho_{r}(\alpha_{B} f(b'),\mu(\beta_{A}(a),\alpha_{A}(a)))\\
&=&\mu_{B}(\mu_{B}(f(b'),f(\beta_{A}(a))),f(\alpha_{A}\beta_{A}(a)))-\mu_{B}(\alpha_{B} f(b'),f(\mu_{A}(\beta_{A}(a),\alpha_{A}(a))))\\
&=&\mu_{B}(f(\mu_{A}(b',\beta_{A}(a))),f(\alpha_{A}\beta_{A}(a)))-\mu_{B}(f(\alpha_{A}(b')),f(\mu_{A}(\beta_{A}(a),\alpha_{A}(a))))\\
&=&f(\mu_{A}(\mu_{A}(b',\beta_{A}(a)),\alpha_{A}\beta_{A}(a)))-f(\mu_{A}(\alpha_{A}(b'),\mu_{A}(\beta_{A}(a),\alpha_{A}(a))))\\
&=&f\circ as_{\alpha_{A},\beta_{A}}(b',\beta_{A}(a),\alpha_{A}(a))=0~~(by~(\textsl{\ref{is2})})
\end{array}$$
Therefore, we get the (\ref{0}) and (\ref{1}). Finally, for all $a_{1},a_{2}\in A$ and $b=f(b')\in B$
$$\begin{array}{lllll}
&&as_{\alpha_{B},\beta_{B}}(\beta_{A}(a_{1}),\alpha_{B}(b),a_{2})=as_{\alpha_{B},\beta_{B}}(\beta_{A}(a_{1}),\alpha_{B}(f(b')),a_{2})\\
&=&\rho_{r}(\rho_{l}(\beta_{A}(a_{1}),\alpha_{B}(f(b'))),\beta_{A}(a_{2}))-\rho_{r}(\rho_{l}(\alpha_{B}(f(b')),a_{2}))\\
&=&\mu_{B}(\mu_{B}(f(\beta_{A}(a_{1})),\alpha_{B} f(b')),f(\beta_{A}(a_{2}))-\mu_{B}(f(\alpha_{A}\beta_{A}(a_{1}),\mu_{B}(\alpha_{B} f(b'),f(a_{2})))\\
&=&\mu_{B}(\mu_{B}(f(\beta_{A}(a_{1})),f\alpha_{A}(b')),f(\beta_{A}(a_{2})))-\mu_{B}(f(\alpha_{A}\beta_{A}(a_{1}),f(\mu_{A}(\alpha_{A}(b'),f(a_{2})))\\
&=&f\circ as_{\alpha_{A},\beta_{A}}(\beta_{A}(a_{1}),\alpha_{A}(b'),a_{2})\\
&=&-f\circ as_{\alpha_{A},\beta_{A}}(\beta_{A}(b'),\alpha_{A}(a_{1}),a_{2})~~(by~(\textsl{\ref{sa1})})\\
&=&-f(\mu_{A}(\mu_{A}(\beta_{A}(b'),\alpha_{A}(a_{1})),\beta_{A}(a_{2})))+f(\mu_{A}(\alpha_{A}\beta_{A}(b'),\mu_{A}(\alpha_{A}(a_{1}),a_{2}))\\
&=&-\rho_{r}(\rho_{r}(\beta_{B} f(b'),f(\alpha_{A}(a_{1}))),f(\beta_{A}(a_{2})))+\rho_{r}(\alpha_{B}\beta_{B} f(b'),\mu_{B}(f(\alpha_{A}(a_{1})),f(a_{2})))\\&=&-\rho_{r}(\rho_{r}(\beta_{B}(b),\alpha_{A}(a_{1})),\beta_{A}(a_{2}))+\rho_{r}(\alpha_{B}\beta_{B}(b),\mu_{A}(\alpha_{A}(a_{1}),a_{2}))\\
&=&-as_{\alpha_{B},\beta_{B}}(\beta_{B}(b),\alpha_{A}(a_{1}),a_{2}),
\end{array}$$
then the relation (\ref{2}) holds.
Similarly, we have  $$as_{\alpha_{B},\beta_{B}}(a_{2},\beta_{A}(a_{1}),\alpha_{B}(b))=-as_{\alpha_{B},\beta_{B}}(a_{2},\beta_{B}(b),\alpha_{A}(a_{1})).$$

Hence, the conclusion holds.

\end{proof}
\begin{defn}
An abelian extension of BiHom-alternative algebras is a short exact sequence of BiHom-alternative algebras
$$0\longrightarrow (V,\phi,\psi)\stackrel{\mbox{i}} \longrightarrow(A,\mu_{A},\alpha_{A},\beta_{A})\stackrel{\mbox{$\pi$}}\longrightarrow (B,\mu_{B},\alpha_{B},\beta_{B})\longrightarrow 0 ,$$
where $(V,\phi,\psi)$ is a trivial BiHom-alternative algebra, $i$ and $\pi$ are morphisms of BiHom-algebras. Furthermore, if there exists a morphism $s:(B,\mu_{B},\alpha_{B},\beta_{B})
\longrightarrow (A,\mu_{A},\alpha_{A},\beta_{A})$ such that $\pi\circ s=id_{B}$ then the abelian extension is said to be split and $s$ is called a section of $\pi$.
\end{defn}
\begin{ex}
Given an abelian extension as in the previous definition,  the BiHom-module
$(V,\alpha_V, \beta_V)$ inherits a structure of a BiHom-alternative $B$-bimodule and the actions of the BiHom-algebra $(B,\mu_B,\alpha_B,\beta_B)$
on $V$ are as follows. For any $x\in B,$ there exist $\tilde{x}\in A$ such that $x=\pi(\tilde{x}).$ Let $x$ acts on $v\in V$ by
$x\cdot v:=\mu_A(\tilde{x},i(v))$ and $v\cdot x:=\mu_A(i(v),\tilde{x}).$
These are well-defined, as another lift $\tilde{x'}$
of $x$ is written $\tilde{x'}=\tilde{x}+v'$ for some $v'\in V$ and thus
$x\cdot v=\mu_A(\tilde{x},i(v))=\mu_A(\tilde{x'},i(v))$ and $v\cdot x=\mu_A(i(v),\tilde{x})=\mu_A(i(v),\tilde{x'})$ because $V$ is trivial.
The actions property follow from the BiHom-alternativity identity.
In case these actions of $B$ on $V$  are trivial, one speaks of a central extension.
\end{ex}
The following results describes a sequence of bimodules over a BiHom-alternative algebra by twisting
the structure maps of a given bimodule over this BiHom-alternative algebra.
\begin{prop}\label{sma}
Let $(A,\mu,\alpha,\beta)$ be a BiHom-alternative algebra and  $V_{\phi,\psi}=(V,\phi,\psi)$  be a BiHom-alternative $A$-bimodule with the structure maps $\rho_l$ and $\rho_r$. Then the maps
\begin{eqnarray}
\rho_l^{(n,m)}=\rho_l\circ(\alpha^n\beta^{m}\otimes Id_V)\label{nma1}\\
\rho_r^{(n,m)}=\rho_r\circ(Id_V\otimes \alpha^n\beta^{m})\label{nma2}
\end{eqnarray}
give the BiHom-module $(V,\phi,\psi)$ the structure of a BiHom-alternative $A$-bimodule that we denote
by $V_{\phi,\psi}^{(n,m)}$ for each $n,m\in\mathbb{N}$.
\end{prop}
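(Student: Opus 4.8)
The plan is to verify the four defining identities \eqref{0}--\eqref{ra3} of a BiHom-alternative bimodule for the twisted structure maps $\rho_l^{(n,m)}$ and $\rho_r^{(n,m)}$. The key preliminary observation is that $(V,\phi,\psi)$ is itself a BiHom-module for the \emph{same} $\phi,\psi$, so the only thing that changes under twisting is the action; hence I must first check that $\rho_l^{(n,m)}$ and $\rho_r^{(n,m)}$ are morphisms of BiHom-modules. This follows from the multiplicativity of $\alpha,\beta$ on $A$ (so that $\alpha^n\beta^m$ commutes with $\alpha$ and $\beta$) together with the fact that $\rho_l,\rho_r$ are already such morphisms and that $\phi,\psi$ commute with $\alpha^n\beta^m$ applied on the $A$-slot: e.g. $\phi\circ\rho_l^{(n,m)}=\phi\circ\rho_l\circ(\alpha^n\beta^m\otimes Id_V)=\rho_l\circ(\alpha\alpha^n\beta^m\otimes\phi)=\rho_l^{(n,m)}\circ(\alpha\otimes\phi)$, and similarly for $\psi$ and for $\rho_r^{(n,m)}$.

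The main computation is to express the module BiHom-associator $as_{V^{(n,m)}_{\phi,\psi}}$ of the twisted bimodule in terms of the original $as_{V_{\phi,\psi}}$. Using the three defining formulas for the module associator together with multiplicativity, one finds in each of the three ``slots'' a clean relation of the form
\begin{eqnarray*}
as_{V^{(n,m)}_{\phi,\psi}}(x,y,v)&=&as_{V_{\phi,\psi}}\big(\alpha^n\beta^m(x),\alpha^n\beta^m(y),v\big),\\
as_{V^{(n,m)}_{\phi,\psi}}(x,v,y)&=&as_{V_{\phi,\psi}}\big(\alpha^n\beta^m(x),v,\alpha^n\beta^m(y)\big),\\
as_{V^{(n,m)}_{\phi,\psi}}(v,x,y)&=&as_{V_{\phi,\psi}}\big(v,\alpha^n\beta^m(x),\alpha^n\beta^m(y)\big),
\end{eqnarray*}
where I have suppressed the (harmless) extra powers of $\alpha,\beta$ that multiplicativity pushes onto the arguments; the point is that each twisted associator is the old associator evaluated at arguments to which $\alpha^n\beta^m$ has been applied. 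For instance, $\rho_r^{(n,m)}\circ(\rho_r^{(n,m)}\otimes\beta)=\rho_r\circ(Id_V\otimes\alpha^n\beta^m)\circ(\rho_r\otimes\beta)\circ(Id_V\otimes\alpha^n\beta^m\otimes Id_A)$, and after moving $\alpha^n\beta^m$ through $\rho_r$ via the morphism property this becomes $\rho_r\circ(\rho_r\otimes\beta)$ applied with $\alpha^n\beta^m$ inserted on both $A$-slots; subtracting the analogous rewriting of $\rho_r^{(n,m)}\circ(\phi\otimes\mu)$ and using $\mu\circ(\alpha^n\beta^m\otimes\alpha^n\beta^m)=\alpha^n\beta^m\circ\mu$ yields the displayed identity.

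Granting these three identities, the four bimodule axioms for $V^{(n,m)}_{\phi,\psi}$ are immediate from those for $V_{\phi,\psi}$: substituting $x\mapsto\beta(x)$, $x\mapsto\alpha(x)$ etc. and replacing $x$ by $\alpha^n\beta^m(x)$ throughout, one gets \eqref{0} from \eqref{0}, \eqref{1} from \eqref{1}, and \eqref{2}, \eqref{ra3} from \eqref{2}, \eqref{ra3} respectively, because $\alpha^n\beta^m$ commutes with $\phi$ (applied in the $V$-slot these are unchanged) and with $\alpha,\beta$, so the symmetrized arguments on the two sides of each identity correspond. I expect the only real obstacle to be bookkeeping: keeping track of exactly how many powers of $\alpha$ and $\beta$ land on each argument when the morphism property is applied repeatedly, and checking that the $\phi,\psi$ appearing inside \eqref{2} and \eqref{ra3} (acting on $v$) are genuinely untouched by the twist — which they are, since the twist only inserts $\alpha^n\beta^m$ on the $A$-factors. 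Once the associator identities above are established the rest is a direct substitution, so I would present the associator computation in detail for one slot and indicate that the other two are analogous.
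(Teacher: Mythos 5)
Your proposal is correct and follows essentially the same route as the paper: expand the twisted module associator, use multiplicativity of $\alpha,\beta$ to identify it with the original associator evaluated at $\alpha^n\beta^m$ of the $A$-arguments (the relations are in fact exact, with no leftover powers to suppress), and then invoke the bimodule identities of $V_{\phi,\psi}$, rewriting back through the same relation for the mixed identities. The only cosmetic remark is that your appeal to the morphism property when rewriting $\rho_r^{(n,m)}\circ(\rho_r^{(n,m)}\otimes\beta)$ is unnecessary, since the inserted $\alpha^n\beta^m$ acts only on the $A$-slots and never needs to be moved through $\rho_r$.
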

\begin{proof}
Since $\rho_{l}$ and $\rho_{r}$ are structure maps on $V$, it is clear  that $\rho_l^{(n,m)}$  and $\rho_r^{(n,m)}$ are structure maps. Next, observe that for all $x\in A$ and $v\in V,$
$$\begin{array}{llll}
&&as_{V_{\phi,\psi}^{(n,m)}}(\beta(x),\alpha(x),v)\\&=&\rho_{l}^{(n,m)}(\mu(\beta(x),\alpha(x)),\psi(v))-\rho_l^{(n,m)}(\alpha\beta(x)
,\rho_{l}^{(n,m)}(\alpha(x),v))\nonumber\\
&=&\rho_{l}(\mu(\alpha^{n}\beta^{m+1}(x),\alpha^{n+1}\beta^{m}(x)),\psi(v))-\rho_l^{(n,m)}(\alpha^{n+1}\beta^{m+1}(x),\rho_{l}(\alpha^{n+1}\beta^{m}(x),v))\nonumber\\
&=&as_{V_{\phi,\psi}}(\beta(\alpha^n\beta^{m}(x)),\alpha(\alpha^n\beta^{m}(x)),v)\\
&=&0~~(\textsl{by~(\ref{0})~in ~$V_{\phi,\psi}$})
\end{array}$$
$$\begin{array}{llll}
&&as_{V_{\phi,\psi}^{(n,m)}}(v,\beta(x),\alpha(x))\\
&=&\rho_{r}^{(n,m)}(\rho_{r}^{(n,m)}(v,\beta(x)),\alpha\beta(x))-\rho_{r}^{(n,m)}(\alpha(v),\mu(\beta(x),\alpha(x))\\
&=&\rho_{r}(\rho_{r}(v,\beta(\alpha^{n}\beta^{m}(x))),\alpha\beta(\alpha^{n}\beta^{m}(x)))-\rho_{r}(\alpha(v),\mu(\beta(\alpha^{n}\beta^{m}(x)),\alpha(\alpha^{n}\beta^{m}(x)))\\
&=&as_{V_{\phi,\psi}}(v,\beta(\alpha^{n}\beta^{m}(x)),\alpha(\alpha^{n}\beta^{m}(x)))\\
&=&0,~~(\textsl{by~(\ref{1})~in ~$V_{\phi,\psi}$}).
\end{array}$$
Therefore, we get (\ref{0}) and (\ref{1}) for $V^{(n,m)}$. Finally, for all $x,y\in A$ and $v\in V,$
$$\begin{array}{llll}
&&as_{V_{\phi,\psi}^{(n,m)}}(\beta(x),\phi(v),y)\\&=&\rho_r^{(n,m)}(\rho_l^{(n,m)}(\beta(x),\phi(v)),\beta(y))-\rho_l^{(n,m)}(\alpha(\beta(x)),
\rho_r^{(n,m)}(\phi(v),y))\nonumber\\
&=&\rho_r(\rho_l(\beta (\alpha^n\beta^{m}(x)),\phi(v)),\beta(\alpha^{n}\beta^{m}(y)))-\rho_l(\alpha\beta(\alpha^{n}\beta^{m}(x)),
\rho_r(\phi(v),(\alpha^n\beta^{m}(y)))\nonumber\\
&=&as_{V_{\phi,\psi}}(\beta(\alpha^n\beta^{m}(x)),\phi(v),\alpha^n\beta^{m}(y)))\nonumber\\
&=&-as_{V_{\phi,\psi}}(\psi(v),\alpha(\alpha^{n}\beta^{m}(x)),\alpha^n\beta^{m}(y))~~(\textsl{by~(\ref{2})~in ~$V_{\phi,\psi}$})\nonumber\\
&=&-\rho_r(\rho_r(\psi(v),\alpha(\alpha^{n}\beta^{m}(x))),\beta(\alpha^{n}\beta^{m}(y)))+\rho_r(\psi\phi(v),\mu(\alpha(\alpha^{n}\beta^{m}(x)),\alpha^{n}\beta^{m}(y)))\\
&=&-\rho_r^{(n,m)}(\rho_r^{(n,m)}(\psi(v),\alpha(x)),\beta(y))+\rho_r^{(n,m)}(\psi\phi(v),\mu(\alpha(x),y))\nonumber\\
&=&-as_{V_{\phi,\psi}^{(n,m)}}(\psi(v),\alpha(x),y).
\end{array}$$Then the relation (\ref{2}) holds in $V_{\phi,\psi}^{(n,m)}.$
Similarly, we prove that
$$as_{V_{\phi,\psi}^{(n,m)}}(y,\beta(x),\phi(v))=-as_{V_{\phi,\psi}^{(n,m)}}(y,\psi(v),\alpha(x)).$$
This completes the proof.
\end{proof}
\begin{thm}\label{mam}
Let $(A,\mu,\alpha,\beta)$ be an BiHom-alternative algebra, $V_{\phi,\psi}=(V,\phi,\psi)$ be a BiHom-alternative $A$-bimodule with the structure maps
$\rho_l,$ $\rho_r.$ Let $\alpha',\beta'$ be endomorphisms of the BiHom-alternative algebra $(A,\mu,\alpha,\beta)$ such that any two of the maps $\alpha,\alpha',\beta,\beta'$ commute
and $\phi',~\psi'$ be linear self-maps of $V$ such that any two of the maps $\phi,\phi',\psi,\psi'$ commute. Suppose furthermore that
$$\left\{
   \begin{array}{ll}
    \phi'\circ\rho_l=\rho_l\circ(\alpha'\otimes\phi')& \\
     \psi'\circ\rho_l=\rho_l\circ(\beta'\otimes\psi'),&
   \end{array}
 \right.
~~and~~
\left\{
   \begin{array}{ll}
     \phi'\circ\rho_r=\rho_r\circ(\phi'\otimes\alpha') &  \\
     \psi'\circ\rho_r=\rho_r\circ(\psi'\otimes\beta').&
   \end{array}
 \right.$$

and write $A_{\alpha',\beta'}$ for the BiHom-alternative algebra $(A,\mu_{\alpha',\beta'},\alpha\alpha',\beta\beta')$ and
$V_{\phi\phi',\psi\psi'}$ for the BiHom-module $(V,\phi\phi',\psi\psi').$ Then the maps:
\begin{eqnarray}
\tilde{\rho_l}=\rho_l\circ(\alpha'\otimes\psi') \mbox{ and }
\tilde{\rho_r}=\rho_r\circ(\phi'\otimes\beta')
\end{eqnarray}
give the BiHom-module $V_{\phi\phi',\psi\psi'}$ the structure of BiHom-alternative $A_{\alpha',\beta'}$-bimodule.
\end{thm}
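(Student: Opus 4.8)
The plan is to reduce everything to facts already available for the untwisted data. First, since $\alpha',\beta'$ are algebra endomorphisms of $(A,\mu,\alpha,\beta)$ and any two of $\alpha,\beta,\alpha',\beta'$ commute, Proposition~\ref{prop0} tells us that $A_{\alpha',\beta'}=(A,\mu_{\alpha',\beta'},\alpha\alpha',\beta\beta')$ is a BiHom-alternative algebra; and $(V,\phi\phi',\psi\psi')$ is a BiHom-module because $\phi,\phi',\psi,\psi'$ commute pairwise, so $\phi\phi'$ and $\psi\psi'$ commute. The first step is then to check that $\tilde{\rho_l}$ and $\tilde{\rho_r}$ are morphisms of BiHom-modules for the new twists. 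For $\tilde{\rho_l}$ this amounts to $\phi\phi'\circ\tilde{\rho_l}=\tilde{\rho_l}\circ(\alpha\alpha'\otimes\phi\phi')$ and $\psi\psi'\circ\tilde{\rho_l}=\tilde{\rho_l}\circ(\beta\beta'\otimes\psi\psi')$; combining the morphism property $\phi\circ\rho_l=\rho_l\circ(\alpha\otimes\phi)$ of $\rho_l$ with the hypothesis $\phi'\circ\rho_l=\rho_l\circ(\alpha'\otimes\phi')$ gives $\phi\phi'\circ\rho_l=\rho_l\circ(\alpha\alpha'\otimes\phi\phi')$, and then pushing $\tilde{\rho_l}=\rho_l\circ(\alpha'\otimes\psi')$ through and using commutativity yields both identities; $\tilde{\rho_r}$ is handled symmetrically using $\phi'\circ\rho_r=\rho_r\circ(\phi'\otimes\alpha')$ and $\psi'\circ\rho_r=\rho_r\circ(\psi'\otimes\beta')$.

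The heart of the matter is to transport the module BiHom-associator. Expanding the three defining equations of $as_{V_{\phi\phi',\psi\psi'}}$ for the algebra $A_{\alpha',\beta'}$, substituting $\mu_{\alpha',\beta'}=\mu\circ(\alpha'\otimes\beta')$, $\tilde{\rho_l}=\rho_l\circ(\alpha'\otimes\psi')$, $\tilde{\rho_r}=\rho_r\circ(\phi'\otimes\beta')$, and then repeatedly applying that $\alpha',\beta'$ are algebra homomorphisms together with the four compatibility relations to move $\phi',\psi'$ inside $\rho_l,\rho_r$, one obtains
\begin{align*}
as_{V_{\phi\phi',\psi\psi'}}(x,y,v)&=as_{V_{\phi,\psi}}(\alpha'^2(x),\alpha'\beta'(y),\psi'^2(v)),\\
as_{V_{\phi\phi',\psi\psi'}}(x,v,y)&=as_{V_{\phi,\psi}}(\alpha'^2(x),\phi'\psi'(v),\beta'^2(y)),\\
as_{V_{\phi\phi',\psi\psi'}}(v,x,y)&=as_{V_{\phi,\psi}}(\phi'^2(v),\alpha'\beta'(x),\beta'^2(y)),
\end{align*}
for all $x,y\in A$ and $v\in V$ --- the module analogue of the identity used to prove Proposition~\ref{prop0}.

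Granting these three identities, conditions (\ref{0})--(\ref{ra3}) for $V_{\phi\phi',\psi\psi'}$ follow by substitution. For (\ref{0}) one replaces $x$ by $\beta\beta'(x)$ and $y$ by $\alpha\alpha'(x)$ in the first identity; commutativity turns the right-hand side into $as_{V_{\phi,\psi}}\big(\beta(\alpha'^2\beta'(x)),\alpha(\alpha'^2\beta'(x)),\psi'^2(v)\big)$, which vanishes by (\ref{0}) in $V_{\phi,\psi}$. Condition (\ref{1}) is the same argument with the third identity. For (\ref{2}) one substitutes $\beta\beta'(x),\phi\phi'(v),y$ into the second identity and $\psi\psi'(v),\alpha\alpha'(x),y$ into the third; after moving the surplus twists inside by commutativity, both sides become $as_{V_{\phi,\psi}}$ evaluated on triples related by (\ref{2}) in $V_{\phi,\psi}$, so they are negatives of each other. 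Condition (\ref{ra3}) is proved the same way, pairing the first identity against the second. The only genuine difficulty --- everything else being formal manipulation of commuting maps --- is the careful bookkeeping of the exponents of $\alpha',\beta',\phi',\psi'$ occurring in the three transport identities, which is lengthy but routine, just as in the proofs of Propositions~\ref{prop0} and~\ref{sma}.
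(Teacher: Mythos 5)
Your proposal is correct and follows essentially the same route as the paper: the three transport identities you state (which do hold, using that $\alpha',\beta'$ are algebra endomorphisms and the four compatibility relations $\phi'\rho_l=\rho_l(\alpha'\otimes\phi')$, $\psi'\rho_l=\rho_l(\beta'\otimes\psi')$, $\phi'\rho_r=\rho_r(\phi'\otimes\alpha')$, $\psi'\rho_r=\rho_r(\psi'\otimes\beta')$) are exactly the reductions the paper carries out inline when it rewrites $as_{V_{\phi\phi',\psi\psi'}}$ in terms of $as_{V_{\phi,\psi}}$ on twisted arguments and then invokes (\ref{0})--(\ref{ra3}) for $V_{\phi,\psi}$. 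Your only difference is organizational, stating the general identities once and specializing, rather than redoing the expansion for each axiom.
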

\begin{proof}
Trivially, the maps $\tilde{\rho_l}$ and $\tilde{\rho_r}$ are morphisms of BiHom-modules.
Next, for all $x\in A$ and $v\in V,$ we have
$$\begin{array}{llll}
&&as_{V_{\phi\phi',\psi\psi'}}(\beta\beta'(x),\alpha\alpha'(x),v)\\
&=&\widetilde{\rho_{l}}(\mu_{\alpha',\beta'}(\beta\beta'(x),\alpha\alpha'(x)),\psi\psi'(v))-\widetilde{\rho_{l}}(\alpha\alpha'\beta\beta'(x),\widetilde{\rho_{l}}(\alpha\alpha'(x),v))\\
&=&\widetilde{\rho_{l}}(\mu(\beta\beta'\alpha'(x),\alpha\alpha'\beta'(x)),\psi\psi'(v))-\widetilde{\rho_{l}}(\alpha\alpha'\beta\beta'(x),\mu(\alpha\alpha'^{2}(x),\psi'(v))))\\
&=&\rho_{l}(\mu(\beta\beta'\alpha'^{2}(x),\alpha\alpha'^{2}\beta'(x)),\psi\psi'^{2}(v))-\rho_{l}(\alpha\alpha'^{2}\beta\beta'(x),\rho_{l}(\alpha\alpha'^{2}\beta'(x),\psi'^{2}(v)))\\
&=&as_{V_{\phi,\psi}}(\beta(\alpha'^{2}\beta'(x)),\alpha(\alpha'^{2}\beta'(x)),\psi'^{2}(v)))\\
&=&0,~~(\textsl{by~(\ref{0})~in ~$V_{\phi,\psi}$})
\end{array}$$
$$\begin{array}{lllllll}
&&as_{V_{\phi\phi',\psi\psi'}}(v,\beta\beta'(x),\alpha\alpha'(x))\\
&=&\widetilde{\rho_{r}}(\widetilde{\rho_{r}}(v,\beta\beta'(x)),\alpha\alpha'\beta\beta'(x))-\widetilde{\rho_{r}}(\phi\phi'(v),\mu_{\alpha',\beta'}(\beta\beta'(x),\alpha\alpha'(x)))\\
&=&\widetilde{\rho_{r}}(\rho_{r}(\phi'(v),\beta\beta'^{2}(x)),\alpha\alpha'\beta\beta'(x))-\widetilde{\rho_{r}}(\phi\phi'(v),\mu(\alpha'\beta\beta'(x),\alpha\alpha'\beta\beta'(x)))\\
&=&\rho_{r}(\rho_{r}(\phi'^{2}(v),\alpha'\beta\beta'^{2}(x)),\alpha\alpha'\beta\beta'^{2}(x))-\rho_{r}(\phi\phi'^{2}(v),\mu(\alpha'\beta\beta'^{2}(x),\alpha\alpha'\beta'^{2}(x)))\\
&=&as_{V_{\phi,\psi}}(\phi'^{2}(v),\beta(\alpha'\beta'^{2}(x)),\alpha(\alpha'\beta'^{2}(x)))\\
&=&0.~~(\textsl{by~(\ref{1})~in ~$V_{\phi,\psi}$})
\end{array}$$
Therefore, we get (\ref{0}) and (\ref{1}) for $V_{\phi\phi',\psi\psi'}$. Now, for all $x,y\in A$ and $v\in V,$
$$\begin{array}{llllll}
&&as_{V_{\phi\phi',\psi\psi'}}(\beta\beta'(x),\phi\phi'(v),y)\\
&=&\widetilde{\rho_{r}}(\widetilde{\rho_{l}}(\beta\beta'(x),\phi\phi'(v)),\beta\beta'(y))-\widetilde{\rho_{l}}(\alpha\alpha'\beta\beta'(x),\widetilde{\rho_{r}}(\phi\phi'(v),y)))\\
&=&\widetilde{\rho_{r}}(\rho_{l}(\alpha'\beta\beta'(x),\psi'\phi\phi'(v)),\beta\beta'(y))-\widetilde{\rho_{l}}(\alpha\alpha'\beta\beta'(x),\rho_{r}(\phi\phi'^{2}(v),\beta'(y)))\\
&=&\rho_{r}(\rho_{l}(\alpha'^{2}\beta\beta'(x),\psi'\phi\phi'^{2}(v)),\beta\beta'^{2}(y)))-\rho_{l}(\alpha\alpha'^{2}\beta\beta'(x),\rho_{r}(\phi\phi'^{2}\psi'(v),\beta'^{2}(y)))\\
&=&as_{V_{\phi,\psi}}(\beta(\alpha'^{2}\beta'(x)),\phi(\phi'^{2}\psi'(v)),\beta'^{2}(y)))\\
&=&-as_{V_{\phi,\psi}}(\psi(\phi'^{2}\psi'(v)),\alpha(\alpha'^{2}\beta'(x)),\beta'^{2}(y)),~~(\textsl{by~(\ref{2})~in ~$V_{\phi,\psi}$})\\
&=&-\rho_{r}(\rho_{r}(\psi(\phi'^{2}\psi'(v),\alpha\alpha'^{2}\beta'(x)),\beta\beta'^{2}(y))+\rho_{r}(\phi\psi\phi'^{2}\psi'(v),\mu(\alpha\alpha'^{2}\beta'(x),\beta'^{2}(y))\\
&=&-\rho_{r}(\widetilde{\rho_{r}}(\psi\phi'\psi'(v)),\alpha\alpha'^{2}(x)),\beta\beta'^{2}(y))+\rho_{r}(\phi\psi\phi'^{2}\psi'(v),\mu_{\alpha',\beta'}(\alpha\alpha'\beta'(x),\beta'(y))\\
&=&-\widetilde{\rho_{r}}(\widetilde{\rho_{r}}(\psi\psi'(v),\alpha\alpha'(x)),\beta\beta'(y))+\widetilde{\rho_{r}}(\phi\psi\phi'\psi'(v),\mu_{\alpha',\beta'}(\alpha\alpha'(x),y)\\
&=&-as_{V_{\phi\phi',\psi\psi'}}(\psi\psi'(v),\alpha\alpha'(x),y).
\end{array}$$ Then the relation (\ref{2}) holds in $V_{\phi\phi',\psi\psi'}.$
Similarly, we get\\
$as_{V_{\phi\phi',\psi\psi'}}(y,\beta\beta'(x),\phi\phi'(v))=as_{V_{\phi\phi',\psi\psi'}}(y,\psi\psi'(v),\alpha\alpha'(x)).$

Hence, the conclusion holds.
\end{proof}
\begin{cor}\label{isy}
Let $(A,\mu,\alpha,\beta)$ be a BiHom-alternative algebra, $(V,\phi,\psi)$ be a BiHom-alternative $A$-bimodule with the structure maps
$\rho_l$ and $\rho_r.$ Let $\alpha',\beta'$ be endomorphisms of the BiHom-alternative algebra $(A,\mu,\alpha,\beta)$ such that any two of the maps $\alpha,\alpha',\beta,\beta'$ commute,
and $\phi',~\psi'$ be linear self-maps of $V$ such that any two of the maps $\phi,\phi',\psi,\psi'$ commute. Suppose furthermore that
$$\left\{
   \begin{array}{ll}
    \phi'\circ\rho_l=\rho_l\circ(\alpha'\otimes\phi')& \\
     \psi'\circ\rho_l=\rho_l\circ(\beta'\otimes\psi'),&
   \end{array}
 \right.
~~and~~
\left\{
   \begin{array}{ll}
     \phi'\circ\rho_r=\rho_r\circ(\phi'\otimes\alpha') &  \\
     \psi'\circ\rho_r=\rho_r\circ(\psi'\otimes\beta').&
   \end{array}
 \right.$$

Write $A_{\alpha',\beta'}$ for the BiHom-alternative algebra $(A,\mu_{\alpha',\beta'},\alpha\circ\alpha',\beta\circ\beta')$ and
$V_{\phi\phi',\psi\psi'}$ for the BiHom-module $(V,\phi\phi',\psi\psi').$ Then the maps:
\begin{eqnarray}
\tilde{\rho_l}^{(n,m)}=\rho_l\circ(\alpha^{n}\beta^{m}\alpha'\otimes\psi') \mbox{ and }
\tilde{\rho_r}^{(n,m)}=\rho_r\circ(\phi'\otimes\alpha^{n}\beta^{m}\beta')
\end{eqnarray}
give the BiHom-module $V_{\phi\phi',\psi\psi'}$ the structure of a BiHom-alternative $A_{\alpha',\beta'}$-bimodule for each $n,m\in\mathbb{N}$.
\end{cor}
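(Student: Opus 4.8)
The plan is to deduce Corollary \ref{isy} from the two constructions already established, Proposition \ref{sma} and Theorem \ref{mam}, by composing them in the right order, rather than verifying the defining identities \eqref{0}--\eqref{ra3} directly.

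First I would apply Proposition \ref{sma} to the BiHom-alternative $A$-bimodule $(V,\phi,\psi)$ with structure maps $\rho_l,\rho_r$: for fixed $n,m\in\mathbb{N}$ this gives a BiHom-alternative $A$-bimodule $V_{\phi,\psi}^{(n,m)}$ on the same underlying BiHom-module $(V,\phi,\psi)$, with structure maps $\rho_l^{(n,m)}=\rho_l\circ(\alpha^n\beta^m\otimes Id_V)$ and $\rho_r^{(n,m)}=\rho_r\circ(Id_V\otimes\alpha^n\beta^m)$.

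Next I would check that $V_{\phi,\psi}^{(n,m)}$, together with the data $\alpha',\beta',\phi',\psi'$, satisfies the hypotheses of Theorem \ref{mam}. The conditions on $\alpha',\beta'$ (endomorphisms of $(A,\mu,\alpha,\beta)$ commuting with $\alpha,\beta$) and on $\phi',\psi'$ (commuting with $\phi,\psi$) are part of the assumptions of the corollary, so only the four intertwining relations between $\phi',\psi'$ and $\rho_l^{(n,m)},\rho_r^{(n,m)}$ must be verified. For example,
\begin{align*}
\phi'\circ\rho_l^{(n,m)}&=\phi'\circ\rho_l\circ(\alpha^n\beta^m\otimes Id_V)=\rho_l\circ(\alpha'\otimes\phi')\circ(\alpha^n\beta^m\otimes Id_V)\\
&=\rho_l\circ(\alpha^n\beta^m\alpha'\otimes\phi')=\rho_l^{(n,m)}\circ(\alpha'\otimes\phi'),
\end{align*}
using the hypothesis $\phi'\circ\rho_l=\rho_l\circ(\alpha'\otimes\phi')$ and that $\alpha'$ commutes with $\alpha$ and $\beta$; the relations $\psi'\circ\rho_l^{(n,m)}=\rho_l^{(n,m)}\circ(\beta'\otimes\psi')$, $\phi'\circ\rho_r^{(n,m)}=\rho_r^{(n,m)}\circ(\phi'\otimes\alpha')$ and $\psi'\circ\rho_r^{(n,m)}=\rho_r^{(n,m)}\circ(\psi'\otimes\beta')$ follow identically.

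Finally, Theorem \ref{mam} applied to $V_{\phi,\psi}^{(n,m)}$ yields a BiHom-alternative $A_{\alpha',\beta'}$-bimodule structure on $(V,\phi\phi',\psi\psi')$ with left and right structure maps $\rho_l^{(n,m)}\circ(\alpha'\otimes\psi')$ and $\rho_r^{(n,m)}\circ(\phi'\otimes\beta')$; expanding these and again using that $\alpha',\beta'$ commute with $\alpha,\beta$ gives exactly $\tilde{\rho_l}^{(n,m)}=\rho_l\circ(\alpha^n\beta^m\alpha'\otimes\psi')$ and $\tilde{\rho_r}^{(n,m)}=\rho_r\circ(\phi'\otimes\alpha^n\beta^m\beta')$, as claimed. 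There is no real obstacle here; the proof is bookkeeping, and the only delicate points are (i) confirming that the intertwining hypotheses survive the first twist so that Theorem \ref{mam} can be invoked, and (ii) performing the two twists in the correct order — doing Theorem \ref{mam} first and Proposition \ref{sma} afterwards would twist by powers of $\alpha\alpha'$ and $\beta\beta'$ and would not reproduce the stated maps.
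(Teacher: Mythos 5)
Your proposal is correct and follows the same route as the paper, whose proof of this corollary is precisely the one-line observation that it follows from Proposition \ref{sma} and Theorem \ref{mam}; you simply make explicit the order of composition (first the $(n,m)$-twist of Proposition \ref{sma}, then Theorem \ref{mam}) and verify that the intertwining hypotheses persist, which is the intended argument.
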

\begin{proof}
The proof follows from Proposition \ref{sma} and Theorem \ref{mam}.
\end{proof}
\begin{cor}\label{ismail}
Let $(A,\mu,\alpha,\beta)$ be a BiHom-alternative algebra, $(V,\phi,\psi)$ be a BiHom-alternative $A$-bimodule with the structure maps
$\rho_l$ and $\rho_r$.
Then writting\\ $A_{\alpha^{r},\beta^{s}}=(A,\m_{\alpha^{r},\beta^{s}}=\mu(\alpha^r\otimes\beta^s),\alpha^r,\beta^s),$  the maps:
\begin{eqnarray}
\tilde{\rho_l}^{(n,m,q)}=\rho_l\circ(\alpha^{n+r}\beta^{m}\otimes\psi^{q}) \mbox{ and }
\tilde{\rho_r}^{(n,m,p)}=\rho_r\circ(\phi^{p}\otimes\alpha^{n}\beta^{m+s})
\end{eqnarray}
give the BiHom-module $V_{\phi^{p},\psi^{q}}=(V,\phi^p,\psi^q)$ the structure of a BiHom-alternative $A_{\alpha^{r},\beta^{s}}$-bimodule for each $n,m,p,q,r,s\in\mathbb{N}.$
\end{cor}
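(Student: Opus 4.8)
The plan is to obtain the statement as a direct specialization of the two constructions already established — Proposition \ref{sma} (twisting a bimodule by powers of $\alpha,\beta$) and Theorem \ref{mam} (twisting simultaneously the algebra by endomorphisms and the module by compatible self-maps) — which are packaged together in Corollary \ref{isy}. Concretely, I would apply Corollary \ref{isy} to the BiHom-alternative $A$-bimodule $(V,\phi,\psi)$ with structure maps $\rho_l,\rho_r$, taking the endomorphisms $\alpha',\beta'$ of $(A,\mu,\alpha,\beta)$ and the self-maps $\phi',\psi'$ of $V$ to be the appropriate powers of $\alpha,\beta$ and of $\phi,\psi$, so that the twisted algebra $A_{\alpha',\beta'}$ becomes $A_{\alpha^{r},\beta^{s}}$ — which is a BiHom-alternative algebra by Proposition \ref{prop0} (resp. Corollary \ref{prop1}) — and the twisted module $V_{\phi\phi',\psi\psi'}$ becomes $V_{\phi^{p},\psi^{q}}$. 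The structure maps furnished by Corollary \ref{isy}, namely $\rho_l\circ(\alpha^{n}\beta^{m}\alpha'\otimes\psi')$ and $\rho_r\circ(\phi'\otimes\alpha^{n}\beta^{m}\beta')$, are then exactly the maps $\tilde{\rho_l}^{(n,m,q)}$ and $\tilde{\rho_r}^{(n,m,p)}$ of the statement, up to the evident reindexing of the free parameters $n,m$.

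To make this legitimate I would verify the hypotheses of Corollary \ref{isy}. The commutativity requirements are automatic: any two of $\alpha,\alpha',\beta,\beta'$ commute because $\alpha$ and $\beta$ commute by hypothesis and all four maps are powers of $\alpha,\beta$; likewise any two of $\phi,\phi',\psi,\psi'$ commute because $\phi$ and $\psi$ commute. The four intertwining relations $\phi'\circ\rho_l=\rho_l\circ(\alpha'\otimes\phi')$, $\psi'\circ\rho_l=\rho_l\circ(\beta'\otimes\psi')$, $\phi'\circ\rho_r=\rho_r\circ(\phi'\otimes\alpha')$, $\psi'\circ\rho_r=\rho_r\circ(\psi'\otimes\beta')$ follow by iterating the defining fact that $\rho_l\colon(A\otimes V,\alpha\otimes\phi,\beta\otimes\psi)\to(V,\phi,\psi)$ and $\rho_r\colon(V\otimes A,\phi\otimes\alpha,\psi\otimes\beta)\to(V,\phi,\psi)$ are morphisms of BiHom-modules, together with the multiplicativity of $\alpha,\beta$. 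With these in hand Corollary \ref{isy} delivers the conclusion for all $n,m,p,q,r,s\in\mathbb{N}$.

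Alternatively, for a self-contained argument one simply repeats the computations of Proposition \ref{sma} and Theorem \ref{mam}: evaluate the module BiHom-associator $as_{V_{\phi^{p},\psi^{q}}}$ of the candidate $A_{\alpha^{r},\beta^{s}}$-bimodule on the four configurations of arguments occurring in (\ref{0})--(\ref{ra3}), push all auxiliary powers of $\alpha,\beta,\phi,\psi$ through using multiplicativity of $\alpha,\beta$ and the BiHom-module morphism property of $\rho_l,\rho_r$, and recognise the outcome as an instance of $as_{V_{\phi,\psi}}$ over $(A,\mu,\alpha,\beta)$ at suitably twisted arguments; then (\ref{0}) and (\ref{1}) vanish and (\ref{2}), (\ref{ra3}) reduce to the required alternating-sign identities. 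The main — and essentially only — obstacle is the index bookkeeping: arranging the exponents of $\alpha,\beta$ (and of $\phi,\psi$) on each tensor slot so that this collapse to an $as_{V_{\phi,\psi}}$-instance actually occurs; once the exponents are matched, the vanishing and sign relations are immediate.
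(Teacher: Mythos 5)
Your proposal follows exactly the paper's route: the paper's entire proof of this corollary is ``Apply Corollary \ref{isy} with $\alpha'=\alpha^{r}$, $\beta'=\beta^{s}$, $\phi'=\phi^{p}$, $\psi'=\psi^{q}$'', and your first two paragraphs are precisely that specialization, with the same identification of the resulting structure maps.

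One caveat on your hypothesis check, which the paper itself glosses over: iterating the BiHom-module morphism property of $\rho_l$ gives $\phi^{p}\circ\rho_l=\rho_l\circ(\alpha^{p}\otimes\phi^{p})$, not $\rho_l\circ(\alpha^{r}\otimes\phi^{p})$, so for independent exponents ($p\neq r$, and similarly $q\neq s$) the four compatibility conditions required by Corollary \ref{isy} are not automatic consequences of the bimodule axioms; they hold automatically only when the module exponents are tied to the algebra exponents. Since the paper's own proof silently applies Corollary \ref{isy} with the same unchecked choices, this is a defect inherited from the source rather than a deviation on your part, but your claim that the intertwining relations ``follow by iterating'' the morphism property is, as stated, too strong.
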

\begin{proof}
Apply Corollary \ref{isy} with $\alpha'=\alpha^{r},~\beta'=\beta^{s}$ and $\phi'=\phi^{p},~\psi'=\psi^{q}$.
\end{proof}
\begin{prop}
Let $(A,\mu,\alpha,\beta)$ be a BiHom-alternative algebra, $V_{\phi,\psi}=(V,\phi,\psi)$ be a BiHom-alternative $A$-bimodule with the structure maps
$\rho_l$ and $\rho_r$ and $R: A\rightarrow A$ a be Rota-Baxter
operator of weight $0$ on $A$. Write $A_{R}$ for the BiHom-alternative algebra $(A,\mu_{R},\alpha,\beta).$  Then the maps defined on $V\otimes A$ and $A\otimes V$ respectively  by
\begin{eqnarray}
\widetilde{\rho}_l(x,v)=\rho_{l}(R\otimes Id_{V}).\label{nma1}\\
\widetilde{\rho}_r(v,x)=\rho_{r}(Id_{V}\otimes R)\label{nma2}
\end{eqnarray}
give the BiHom-module $V_{\phi,\psi}$ the structure of a BiHom-alternative $A_{R}$-bimodule that we denote by $\widetilde{V}_{\phi,\psi}.$
\end{prop}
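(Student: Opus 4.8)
The strategy is to show that every defining identity for $\widetilde{V}_{\phi,\psi}$ over $A_{R}$ collapses, via the weight-$0$ Rota--Baxter relation, onto the corresponding identity for $V_{\phi,\psi}$ over $A$ evaluated at $R$-twisted arguments. As in the theorem that produces $A_{R}$, we use throughout that $R$ commutes with $\alpha$ and $\beta$. First I would verify that $\widetilde{\rho}_{l}=\rho_{l}\circ(R\otimes Id_{V})$ and $\widetilde{\rho}_{r}=\rho_{r}\circ(Id_{V}\otimes R)$ are morphisms of BiHom-modules, i.e. $\phi\widetilde{\rho}_{l}=\widetilde{\rho}_{l}(\alpha\otimes\phi)$, $\psi\widetilde{\rho}_{l}=\widetilde{\rho}_{l}(\beta\otimes\psi)$ and the two analogues for $\widetilde{\rho}_{r}$; each is immediate from $\rho_{l},\rho_{r}$ being morphisms together with $R\alpha=\alpha R$ and $R\beta=\beta R$.

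The computational core is the following. Writing $\widetilde{as}$ for the module BiHom-associator of $\widetilde{V}_{\phi,\psi}$ over $A_{R}$ (built from $\widetilde{\rho}_{l}$, $\widetilde{\rho}_{r}$ and $\mu_{R}$), I would establish the three reduction formulas $\widetilde{as}(x,y,v)=as_{V_{\phi,\psi}}(R(x),R(y),v)$, $\widetilde{as}(x,v,y)=as_{V_{\phi,\psi}}(R(x),v,R(y))$ and $\widetilde{as}(v,x,y)=as_{V_{\phi,\psi}}(v,R(x),R(y))$, valid for all $x,y\in A$ and $v\in V$. For the first one, $\widetilde{as}(x,y,v)=\widetilde{\rho}_{l}(\mu_{R}(x,y),\psi(v))-\widetilde{\rho}_{l}(\alpha(x),\widetilde{\rho}_{l}(y,v))$: the second term equals $\rho_{l}(\alpha R(x),\rho_{l}(R(y),v))$ by $R\alpha=\alpha R$, while in the first term $\widetilde{\rho}_{l}(\mu_{R}(x,y),\psi(v))=\rho_{l}\big(R(\mu(R(x),y)+\mu(x,R(y))),\psi(v)\big)=\rho_{l}(\mu(R(x),R(y)),\psi(v))$ by the weight-$0$ Rota--Baxter relation, and comparison with the defining formula of $as_{V_{\phi,\psi}}$ on $A\otimes A\otimes V$ gives the claim. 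The third identity is obtained identically, the Rota--Baxter relation again collapsing $R(\mu_{R}(x,y))$ into $\mu(R(x),R(y))$ inside the slot carrying a product; the middle identity needs no Rota--Baxter input at all, only $R\alpha=\alpha R$ and $R\beta=\beta R$, since the $A\otimes V\otimes A$-associator contains no multiplication.

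Finally I would feed these three formulas into the four defining identities (\ref{0})--(\ref{ra3}) of a BiHom-alternative bimodule. Using once more that $R$ commutes with $\alpha,\beta$: $\widetilde{as}(\beta(x),\alpha(x),v)=as_{V_{\phi,\psi}}(\beta(R(x)),\alpha(R(x)),v)=0$ by (\ref{0}) for $V_{\phi,\psi}$, and likewise $\widetilde{as}(v,\beta(x),\alpha(x))=0$ from (\ref{1}); next $\widetilde{as}(\beta(x),\phi(v),y)=as_{V_{\phi,\psi}}(\beta(R(x)),\phi(v),R(y))=-as_{V_{\phi,\psi}}(\psi(v),\alpha(R(x)),R(y))=-\widetilde{as}(\psi(v),\alpha(x),y)$, the middle step being (\ref{2}) in $V_{\phi,\psi}$ applied to $R(x)$ and $R(y)$, and (\ref{ra3}) follows the same way from (\ref{ra3}) in $V_{\phi,\psi}$. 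Hence $\widetilde{V}_{\phi,\psi}$ is a BiHom-alternative $A_{R}$-bimodule.

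There is no deep obstruction here; the only points requiring care are bookkeeping ones -- tracking which tensor slot each copy of $R$ occupies after composing the structure maps, and matching the outcome against the three pieces of the definition of the module BiHom-associator. The single genuine ingredient is the weight-$0$ Rota--Baxter identity, which is precisely what rewrites $\mu_{R}$ in terms of $\mu$ with $R$-shifted inputs, while the commutation of $R$ with $\alpha$ and $\beta$ is what keeps those inputs of the form $\alpha(R(x))$, $\beta(R(x))$, so that the BiHom-alternative bimodule identities of $V$ apply verbatim.
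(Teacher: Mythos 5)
Your proposal is correct and follows essentially the same route as the paper: verify that $\widetilde{\rho}_l,\widetilde{\rho}_r$ are BiHom-module morphisms using $R\alpha=\alpha R$, $R\beta=\beta R$, then use the weight-$0$ Rota--Baxter identity to collapse each twisted module BiHom-associator to $as_{V_{\phi,\psi}}$ evaluated at $R$-shifted arguments, and conclude from the bimodule identities (\ref{0})--(\ref{ra3}) of $V_{\phi,\psi}$. The only difference is presentational: you isolate the three reduction formulas as a preliminary step, whereas the paper carries out the same computations inline for each identity.
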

\begin{proof} Since the structure map $\rho_l$ is a morphism of BiHom-modules, we get:
$$\begin{array}{lllll}
\phi\tilde{\rho_l}&=&\phi\rho_l\circ(R\otimes Id_V)\\
&=&\rho_l\circ(\alpha\circ R\otimes\phi)\\ 
&=&\rho_l\circ(R\circ\alpha\otimes \phi)\\
&=&\widetilde{\rho_l}\circ(\alpha\otimes\phi)\nonumber
\end{array}$$
 Similarly, we get that $\psi\widetilde{\rho_l}=\widetilde{\rho_l}\circ(\beta\otimes\psi),\
 \phi\widetilde{\rho_r}=\widetilde{\rho_r}\circ(\phi\otimes\alpha)$ and $\psi\widetilde{\rho_r}=\widetilde{\rho_r}\circ(\psi\otimes\beta)$. Thus $\widetilde{\rho_l}$ and $\widetilde{\rho_r}$ are morphisms of BiHom-modules.
Next, for any elements $x, y\in A$ and $v\in V$, we have
  $$\begin{array}{lll}
&&as_{\widetilde{V}_{\phi,\psi}}(\beta(x),\alpha(x),v)\\
&=&\widetilde{\rho_{l}}(\mu_{R}(\beta(x),\alpha(x)),\psi(v))-\widetilde{\rho_{l}}(\alpha\beta(x),\widetilde{\rho_{l}}(\alpha(x),v)\\
&=&\rho_{l}(R(\mu(R(\beta(x),\alpha(x))+\mu(\beta(x),R(\alpha(x))),\psi(v))-\rho_{l}(R(\alpha\beta(x)),\rho_{l}(R(\alpha(x)),v))\\
&=&\rho_{l}(\mu(\beta(R(x)),\alpha(R(x)),\psi(v))-\rho_{l}(\alpha\beta(x),\rho_{l}(\alpha(R(x)),v))\\
&=&as_{V_{\phi,\psi}}(\beta(R(x)),\alpha(R(x)),v)=0,~~(\textsl{by~(\ref{0})~in ~$V_{\phi,\psi}$})
  \end{array}$$
 $$\begin{array}{lll}
&&as_{\widetilde{V}_{\phi,\psi}}(v,\beta(x),\alpha(x))\\
&=&\widetilde{\rho_{r}}(\widetilde{\rho_{r}}(v,\beta(x)),\alpha\beta(x))-\widetilde{\rho_{r}}(\phi(v),\mu_{R}(\beta(x),\alpha(x))\\
&=&\widetilde{\rho_{r}}(\rho_{r}(v,R(\beta(x))),\alpha\beta(x))-\widetilde{\rho_{r}}(\phi(v),\mu(R(\beta(x)),\alpha(x)+\mu(\beta(x),R(\alpha(x)))\\
&=&\rho_{r}(\rho_{r}(v,R(\beta(x))),R(\alpha\beta(x)))-\rho_{r}(\phi(v),R(\mu(R(\beta(x)),\alpha(x))+\mu(\beta(x),R(\alpha(x))))\\
&=&\rho_{r}(\rho_{r}(v,\beta(R(x))),\alpha\beta(R(x)))-\rho_{r}(\phi(v),(\mu(\beta(R(x)),\alpha(R(x)))\\
&=&as_{V_{\phi,\psi}}(v,\beta(R(x)),\alpha(R(x)))=0,~~(\textsl{by~(\ref{1})~in ~$V_{\phi,\psi}$})
  \end{array}$$
  $$\begin{array}{lll}
&&as_{\widetilde{V}_{\phi,\psi}}(\beta(x),\phi(v),y)\\
&=&\widetilde{\rho_{r}}(\widetilde{\rho_{l}}(\beta(x),\phi(v)),\beta(y))-\widetilde{\rho_{l}}(\alpha\beta(x),\widetilde{\rho_{r}}(\phi(v),y)\\
&=&\widetilde{\rho_{r}}(\rho_{l}(R(\beta(x)),\phi(v)),\beta(y))-\widetilde{\rho_{l}}(\alpha\beta(x),\rho_{r}(\phi(v),R(y))\\
&=&\rho_{r}(\rho_{l}(R(\beta(x)),\phi(v)),R(\beta(y)))-\rho_{l}(R(\alpha\beta(x)),\rho_{r}(\phi(v),R(y))\\
&=&\rho_{r}(\rho_{l}(\beta(R(x)),\phi(v)),\beta(R(y)))-\rho_{l}(\alpha\beta(R(x)),\rho_{r}(\phi(v),R(y))~~~~~~~~~~~~~~~~~~~~~~~~~~~\\
&=&as_{{V}_{\phi,\psi}}(\beta(R(x)),\phi(v),R(y))\\
&=&-as_{{V}_{\phi,\psi}}(\psi(v),\alpha(R(x)),R(y))~~(\textsl{by~(\ref{2})~in ~$V_{\phi,\psi}$})\\
&=&-\rho_{r}(\rho_{r}(\psi(v),\alpha(R(x))),\beta(R(y)))+\rho_{r}(\phi(\psi(v)),(\mu(\alpha(R(x)),R(y))\\
&=&-\rho_{r}(\rho_{r}(\psi(v),R(\alpha(x))),R(\beta(y)))+\rho_{r}(\phi(\psi(v)),R(\mu(R(\alpha(x)),y)+\mu(\alpha(x),R(y)))\\
&=&-\widetilde{\rho_{r}}(\rho_{r}(\psi(v),R(\alpha(x))),\beta(y))+\widetilde{\rho_{r}}(\phi(\psi(v)),\mu(R(\alpha(x)),y)+\mu(\alpha(x),R(y))\\
&=&-\widetilde{\rho_{r}}(\widetilde{\rho_{r}}(\psi(v),\alpha(x)),\beta(y))+\widetilde{\rho_{r}}(\phi(\psi(v)),\mu_{R}(\alpha(x),y)\\
&&-as_{{V}_{\phi,\psi}}(\psi(v),\alpha(x),y),\\
\end{array}$$
similarly, $as_{\widetilde{V}_{\phi,\psi}}(y,\beta(x),\phi(v))=-as_{\widetilde{V}_{\phi,\psi}}(y,\psi(v),\alpha(x)).$

This completes the proof.
\end{proof}
The following Theorem studied by T. Chtioui et al \cite{chtioui2}, shows that a direct sum of a BiHom-alternative algebra and a bimodule over this BiHom-algebra, is still a BiHom-alternative, called the split null extension determined by the given bimodule.
\begin{thm}\cite{chtioui2}\label{smHa}
Let $(A,\mu,\alpha,\beta)$ be a BiHom-alternative algebra and  $(V,\phi,\psi)$  be an $A$-bimodule with the structure maps $\rho_l$ and $\rho_r$. If define on $A\oplus V$ the bilinear maps
$\tilde{\mu}: (A\oplus V)^{\otimes 2}\longrightarrow A\oplus V,$
$\tilde{\mu}(a+m,b+n):=\mu(a,b)+\rho_{l}(a, n)+\rho_{r}(m ,b)$  and the linear maps
$\tilde{\alpha}: A\oplus V\longrightarrow A\oplus V,$ $\tilde{\alpha}(a+m):=\alpha(a)+\phi(m),~$ $\tilde{\beta}: A\oplus V\longrightarrow A\oplus V,$ $\tilde{\beta}(a+m):=\beta(a)+\psi(m).$ Then $E=(A\oplus V,\tilde{\mu},\tilde{\alpha},\tilde{\beta})$ is
a BiHom-alternative algebra.
\end{thm}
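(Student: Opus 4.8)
The plan is to verify the three conditions in the definition of a BiHom-alternative algebra for $E=(A\oplus V,\tilde\mu,\tilde\alpha,\tilde\beta)$: that $(E,\tilde\alpha,\tilde\beta)$ is a BiHom-module, that $\tilde\alpha$ and $\tilde\beta$ are multiplicative with respect to $\tilde\mu$, and that $E$ satisfies both the left and the right BiHom-alternative identities. Throughout I would write a generic element of $E$ as $a+m$ with $a\in A$, $m\in V$, and split every computation into its $A$-component and its $V$-component.

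The structural checks are routine. First, $\tilde\alpha\tilde\beta=\tilde\beta\tilde\alpha$ is immediate from $\alpha\beta=\beta\alpha$ and $\phi\psi=\psi\phi$. For multiplicativity I would expand $\tilde\alpha\circ\tilde\mu(a+m,b+n)$ and $\tilde\mu(\tilde\alpha(a+m),\tilde\alpha(b+n))$ and compare components: the $A$-component reduces to the multiplicativity of $\alpha$ with respect to $\mu$, while the $V$-component reduces to $\phi\circ\rho_l=\rho_l\circ(\alpha\otimes\phi)$ and $\phi\circ\rho_r=\rho_r\circ(\phi\otimes\alpha)$, which hold because $\rho_l$ and $\rho_r$ are morphisms of BiHom-modules; the case of $\tilde\beta$ is identical with $\beta,\psi$ replacing $\alpha,\phi$.

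The core of the argument is the alternativity, and here I would use the characterization of \cite{chtioui1} recalled above: it suffices to show $as_{\tilde\alpha,\tilde\beta}(\tilde\beta(X),\tilde\alpha(X),Y)=0$ and $as_{\tilde\alpha,\tilde\beta}(X,\tilde\beta(Y),\tilde\alpha(Y))=0$ for all $X,Y\in E$. Writing $X=a+m$, $Y=b+n$ and expanding $\tilde\mu$ twice in each of the two terms of the associator, the $A$-component of $as_{\tilde\alpha,\tilde\beta}(\tilde\beta(X),\tilde\alpha(X),Y)$ is exactly $as_{\alpha,\beta}(\beta(a),\alpha(a),b)$, which is $0$ by (\ref{is1}). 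For the $V$-component I would collect the six terms involving $\rho_l$ and $\rho_r$ and, using the commutation of $\alpha,\beta$ (and of $\phi,\psi$) to line up the twists, recognize them as the three variants of the module BiHom-associator; this rewrites the $V$-component as
$$as_{V_{\phi,\psi}}(\beta(a),\alpha(a),n)+as_{V_{\phi,\psi}}(\beta(a),\phi(m),b)+as_{V_{\phi,\psi}}(\psi(m),\alpha(a),b),$$
whose first term vanishes by (\ref{0}) and whose last two cancel by (\ref{2}) (with $x=a$, $v=m$, $y=b$). The right identity is symmetric: the $A$-component is $as_{\alpha,\beta}(a,\beta(b),\alpha(b))=0$ by (\ref{is2}), and the same manipulation turns the $V$-component into
$$as_{V_{\phi,\psi}}(m,\beta(b),\alpha(b))+as_{V_{\phi,\psi}}(a,\beta(b),\phi(n))+as_{V_{\phi,\psi}}(a,\psi(n),\alpha(b)),$$
whose first term vanishes by (\ref{1}) and whose last two cancel by (\ref{ra3}).

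I expect the only real obstacle to be the bookkeeping: one must fully expand $\tilde\mu(\tilde\mu(-,-),-)$ and $\tilde\mu(-,\tilde\mu(-,-))$, track which of the six $\rho_l/\rho_r$-terms feeds into each $V$-component, and see which triples of twisted arguments assemble into $as_{V_{\phi,\psi}}$ on $A\otimes A\otimes V$, on $A\otimes V\otimes A$, and on $V\otimes A\otimes A$. Once the grouping is carried out correctly, every group is annihilated by one of (\ref{0})--(\ref{ra3}) together with (\ref{is1})--(\ref{is2}), so no identity beyond those already available is needed.
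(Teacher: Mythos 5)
Your argument is correct, and the key bookkeeping claims check out: for $X=a+m$, $Y=b+n$ the $A$-component of $as_{\tilde\alpha,\tilde\beta}(\tilde\beta(X),\tilde\alpha(X),Y)$ is indeed $as_{\alpha,\beta}(\beta(a),\alpha(a),b)$, and the six $\rho_l/\rho_r$-terms of the $V$-component do assemble exactly into $as_{V_{\phi,\psi}}(\beta(a),\alpha(a),n)+as_{V_{\phi,\psi}}(\beta(a),\phi(m),b)+as_{V_{\phi,\psi}}(\psi(m),\alpha(a),b)$, killed by (\ref{0}) and (\ref{2}); the right-hand case works the same way with (\ref{1}) and (\ref{ra3}), and invoking the unlinearized characterization (\ref{is1})--(\ref{is2}) is legitimate once multiplicativity of $\tilde\alpha,\tilde\beta$ is established (characteristic $0$). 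Be aware, though, that there is no in-paper proof to compare against: Theorem \ref{smHa} is quoted from \cite{chtioui2} and stated without proof, so your write-up is an independent reconstruction. One small point of precision: the hypothesis ``$A$-bimodule'' in the statement must be read as \emph{BiHom-alternative} $A$-bimodule (i.e. axioms (\ref{0})--(\ref{ra3}) hold, not merely the BiHom-module morphism conditions of Definition 2.13), and your proof correctly uses exactly those axioms --- it would be worth saying so explicitly. For comparison, the analogous split null extension theorem for BiHom-Jordan bimodules at the end of Section 4 is proved in the paper by a direct expansion of the defining identity rather than via an unlinearized criterion; your route through Lemma 2.9 is the natural alternative in the alternative-algebra setting and keeps the computation shorter.
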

\begin{rmk} Consider the  split null extension $A\oplus V$ determined by the BiHom-alternative bimodule $(V,\varphi,\psi)$ for the BiHom-alternative algebra
$(A,\mu,\alpha,\beta)$ in the previous theorem. Write  elements $a+v$ of $A\oplus V$ as $(a,v).$ Then there is an injective homomorphism of BiHom-modules
$i :V\rightarrow A\oplus V $ given by $i(v)=(0,v)$ and a surjective homomorphism of BiHom-modules $\pi : A\oplus V\rightarrow A$ given by $\pi(a,v)=a.$
Moreover, $i(V)$ is a two-sided BiHom-ideal of $A\oplus V$  such that $A\oplus V/i(V)\cong A$. On the other hand, there is a morphism of BiHom-algebras
$\sigma: A\rightarrow A\oplus V$ given by $\sigma(a)=(a,0)$ which is clearly a section of $\pi.$ Hence, we obtain the abelian split exact sequence of
BiHom-alternative algebras and $(V, \alpha_V,\beta_{V})$ is a BiHom-alternative bimodule for $A$ via $\pi.$
 \end{rmk}

\subsection{BiHom-Jordan bimodules}
In this subsection, we study BiHom-Jordan bimodules.
\begin{defn}
Let $(A,\mu,\alpha, \beta)$ be a BiHom-Jordan algebra.
\begin{enumerate}
\item  A right BiHom-Jordan $A$-module is a BiHom-module $(V,\phi,\psi)$   with a right structure map\\ $\rho_r:V\otimes A\longrightarrow V,$ $v\otimes a\mapsto v\cdot a$ such that the following conditions hold:
\begin{eqnarray}
&&\circlearrowleft_{(x,y,z)}\Big\{(\phi\psi^2(v)\cdot\alpha\beta(x)\alpha^2(y))\cdot
\beta\alpha^3(z)-(\phi\psi^2(v)\cdot\beta\alpha^2(z))\cdot\alpha^2\beta(x)
\alpha^3(y)\Big\}=0~~~~~~~~~~\nonumber\\&&
\label{brc2}\\
&& ((\psi^2(v)\cdot\beta\alpha(x))\cdot\beta\alpha^2(y))\cdot\beta\alpha^3(z)
+ ((\psi^2(v)\cdot\beta\alpha(z))\cdot\beta\alpha^2(y))\cdot\beta\alpha^3(x)~~~~~~~~~~~~~~
\nonumber\\
&&+\phi^2\psi^2(v)\cdot(\beta\alpha(x)\alpha^2(z))\alpha^3(y)=
(\phi\psi^2(v)\cdot\beta\alpha^2(x))\cdot\beta\alpha^2(y)\alpha^3(z)\nonumber\\
&&+(\phi\psi^2(v)\cdot\beta\alpha^2(z))\cdot\beta\alpha^2(y)\alpha^3(x)+
(\phi\psi^2(v)\cdot\beta\alpha^2(y))\cdot\alpha^2\beta(x)\alpha^3(z)
\label{brc3}
\end{eqnarray}
 for all $x,y,z\in A$ and $v\in V.$
\item A left BiHom-Jordan $A$-module is a BiHom-module $(V,\phi,\psi)$  with a left structure map\\ $\rho_l:A\otimes V\longrightarrow V,$ $a\otimes v\mapsto a\cdot v$
such that $\psi$ is invertible and the following conditions hold:
\begin{eqnarray}
&&\circlearrowleft_{(x,y,z)}\Big\{\beta^2\alpha^2(z)\cdot(\alpha\beta(x)\alpha^2(y)
\cdot\phi^3(v))-\alpha\beta^2(x)\alpha^2\beta(y)
\cdot(\beta\alpha^2(z)\cdot\phi^3(v))\Big\}=0~~~~~~~~~~~~\nonumber\\
&&\label{blc2}\\
&&\beta^2\alpha^2(z)\cdot(\beta\alpha^2(y)\cdot(\alpha^2(x)
\cdot\psi^{-1}\phi^3(v)))+\beta^2\alpha^2(x)\cdot(\beta\alpha^2(y)
\cdot(\alpha^2(z)\cdot\psi^{-1}\phi^3(v)))~~~~\nonumber\\
&&+((\beta^2(x)\beta\alpha(z))\beta\alpha^2(y))\cdot\phi^3\psi(v)
=\beta^2\alpha(y)\beta\alpha^2(z)\cdot(\beta\alpha^2(x)\cdot\phi^3(v))
\nonumber\\
&&+\beta^2\alpha(y)\beta\alpha^2(x)\cdot(\beta\alpha^2(z)\cdot\phi^3(v))
+\beta^2\alpha(x)\beta\alpha^2(z)\cdot(\beta\alpha^2(y)\cdot\phi^3(v))
\label{blc3}
\end{eqnarray}
for all $x,y,z\in A$ and $v\in V.$
\end{enumerate}
\end{defn}
The following result allows to introduce the notions of right and left special BiHom-Jordan modules.
\begin{thm}\label{thmright}
Let $(A,\mu,\alpha, \beta)$ be a BiHom-Jordan algebra, $(V,\phi,\psi)$ be a BiHom-module and $\rho_r: V\otimes A\rightarrow V,$ $a\otimes v\mapsto v\cdot a,$ be a linear map satisfying
\begin{eqnarray}
\phi\rho_r=\rho_r(\phi\otimes\alpha) \mbox{ and }
\psi\rho_r=\rho_r(\psi\otimes\beta)~~~~~~~~~~~~~~~~~~~~~~~~~~~~~~~~~~~~~~~~~~~~~~~~~~~~~~~~~~
 \label{rc5}\\
\phi(v)\cdot\beta(x)\alpha(y)=(v\cdot \beta(x))\cdot\beta\alpha(y)+(v\cdot \beta(y)\cdot\alpha\beta(x), \mbox{ for all $~x,y\in A$ and $v\in V.~~~~$ }   \label{rc4}
\end{eqnarray}
 Then $(V,\phi,\psi, \rho_r)$ is a right BiHom-Jordan $A$-module called
a right special BiHom-Jordan $A$-module.
\end{thm}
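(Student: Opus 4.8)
The plan is to verify that the map $\rho_r$ satisfying the special identity \eqref{rc4} automatically satisfies both defining identities \eqref{brc2} and \eqref{brc3} of a right BiHom-Jordan $A$-module, using only \eqref{rc4}, the compatibility \eqref{rc5}, the BiHom-commutativity and multiplicativity of $\mu$, and the BiHom-Jordan identity of $A$. First I would record the immediate consequences of \eqref{rc5}: $\phi$ and $\psi$ commute with all ``right multiplications'' $R_a:v\mapsto v\cdot a$ after twisting the element $a$ by $\alpha$ resp.\ $\beta$, and hence that $\phi,\psi$ commute with the iterated products that appear in \eqref{brc2}--\eqref{brc3}. This lets me normalize powers of $\phi,\psi$ on both sides so that the verification reduces to a purely ``associator-type'' identity in the right action, with all the $\phi,\psi,\alpha,\beta$ bookkeeping handled once and for all.

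For \eqref{brc2}, I would introduce the module BiHom-associator $as_{V_{\phi,\psi}}$ restricted to $V\otimes A\otimes A$, namely $as_{V_{\phi,\psi}}(v,x,y)=(v\cdot x)\cdot\beta(y)-\phi(v)\cdot\mu(x,y)$, and rewrite \eqref{rc4} as the statement that this associator is alternating in its last two $A$-arguments after the appropriate twists — exactly the right BiHom-alternative condition \eqref{sa2}/\eqref{is2} transported to $V$. Then the left-hand side of \eqref{brc2}, being a cyclic sum over $x,y,z$ of a difference of two triple products, can be expanded into module associators; the alternating property collapses the cyclic sum to $0$ by the same cancellation that proves \eqref{sa2} is equivalent to \eqref{is2} in Lemma (the one following \eqref{is2}). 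The key point is that \eqref{rc4} is strong enough to force the full ``linearized'' alternativity on $V$, not just the diagonal version.

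For \eqref{brc3} — the genuine BiHom-Jordan identity for the module — I would proceed by the standard trick of \emph{linearizing} the BiHom-Jordan identity of $A$ together with \eqref{rc4}. Concretely, in the BiHom-Jordan identity $as_{\alpha,\beta}(\mu(\beta^2(x),\alpha\beta(x)),\alpha^2\beta(y),\alpha^3(x))=0$ one replaces the algebra $A$ by the split null extension $E=A\oplus V$ of Theorem \ref{smHa}; wait — $E$ is BiHom-alternative, not BiHom-Jordan, so instead I would work directly: substitute $v\in V$ in place of one occurrence of an element and use \eqref{rc4} repeatedly to move all $V$-entries to the left, reducing every term to the form $(\cdots((v\cdot a_1)\cdot a_2)\cdot a_3)$ or $\phi^?\psi^?(v)\cdot\mu(a_1,a_2)\cdot a_3$. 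After this reduction, both sides of \eqref{brc3} become images under $\rho_r$-iteration of the plain BiHom-Jordan (and BiHom-alternative) identities holding in $A$, with the cyclic sum $\circlearrowleft_{(x,y,z)}$ matching the cyclic-sum form of the BiHom-Jordan identity recorded in the Remark after Definition \ref{BiHom jordan}.

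The main obstacle is the bookkeeping in the last step: matching, term by term and with the correct powers of $\alpha,\beta,\phi,\psi$, the six-term expansion of \eqref{brc3} against the linearized BiHom-Jordan identity of $A$ after every entry has been pushed left via \eqref{rc4}. This is where a slip in the twisting exponents would break the proof, so I would carry out that matching slowly, first checking the ``top-degree'' terms $((\psi^2(v)\cdot\beta\alpha(x))\cdot\beta\alpha^2(y))\cdot\beta\alpha^3(z)$ and their cyclic images, then the ``collapsed'' terms involving $\mu(\beta\alpha(x),\alpha^2(z))$, and only then invoking \eqref{rc4} one final time to identify $(\phi\psi^2(v)\cdot\beta\alpha^2(x))\cdot\beta\alpha^2(y)\alpha^3(z)$ with the corresponding expanded triple product. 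Everything else — the compatibility of $\tilde\rho$ with $\phi,\psi$, and identity \eqref{brc2} — I expect to be routine once \eqref{rc5} and the alternating reformulation of \eqref{rc4} are in hand.
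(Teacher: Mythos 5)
There is a genuine gap at both of your key steps. First, your reformulation of \eqref{rc4} is incorrect: \eqref{rc4} is \emph{not} the right BiHom-alternative condition \eqref{sa2}/\eqref{is2} transported to $V$. Since the product of a BiHom-Jordan algebra is BiHom-commutative, $\mu(\beta(x),\alpha(y))=\mu(\beta(y),\alpha(x))$, the definition $as_{V_{\phi,\psi}}(v,x,y)=(v\cdot x)\cdot\beta(y)-\phi(v)\cdot\mu(x,y)$ together with \eqref{rc4} and $\alpha\beta=\beta\alpha$ gives
\begin{align*}
as_{V_{\phi,\psi}}(v,\beta(x),\alpha(y))+as_{V_{\phi,\psi}}(v,\beta(y),\alpha(x))
&=(v\cdot\beta(x))\cdot\beta\alpha(y)+(v\cdot\beta(y))\cdot\beta\alpha(x)-2\,\phi(v)\cdot\beta(x)\alpha(y)\\
&=-\,\phi(v)\cdot\beta(x)\alpha(y),
\end{align*}
which is not zero in general: the special Jordan condition and the alternativity condition differ exactly by the doubling produced by BiHom-commutativity. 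Consequently the "alternating" cancellation you invoke to collapse the cyclic sum in \eqref{brc2} is unavailable, and that part of the argument fails. The paper proves \eqref{brc2} differently: it applies \eqref{rc5} to pull $\phi$ inside, applies \eqref{rc4} once to expand $\phi(\psi^2(v)\cdot\beta\alpha(z))\cdot\beta\alpha^2(x)\alpha^3(y)$, applies \eqref{rc4} a second time in the reverse direction, and then the unwanted terms cancel under the cyclic sum by relabelling — a mechanism distinct from alternativity.

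Second, your route to \eqref{brc3} has no working engine. You propose to substitute $v\in V$ into the (linearized) BiHom-Jordan identity of $A$; but nothing licenses substituting a module element into an identity of $A$ — that is precisely what the module axioms are supposed to encode, and the BiHom-Jordan split null extension that would legitimize such substitutions appears only later in the paper and presupposes the bimodule axioms (you notice yourself that Theorem \ref{smHa} concerns BiHom-alternative algebras, but you do not replace it with anything). Moreover, once you have "pushed all $V$-entries to the left" every term lies in $V$, so identities holding in $A$ (and there are no BiHom-alternative identities in $A$ here — $A$ is only BiHom-Jordan) simply cannot be applied to those expressions. In fact the paper's verification of \eqref{brc3} never uses the BiHom-Jordan identity of $A$ at all: it is a self-contained chain of rewritings using only \eqref{rc5}, repeated applications of \eqref{rc4} in both directions, and BiHom-commutativity, at the end of which the six terms of \eqref{brc3} are matched. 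To repair your proposal you would need to drop both the alternativity reformulation and the substitution idea and carry out this direct computation.
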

\begin{proof} It suffices to prove $(\ref{brc2})$ and $(\ref{brc3}).$  For all $(x,y)\in A^{\times 2}$ and $v\in V,$ we have:
\begin{eqnarray}
&&\circlearrowleft_{(x,y,z)}(\phi\psi^2(v)\cdot\beta\alpha^2(z))\cdot\alpha^2\beta(x)
\alpha^3(y)=\circlearrowleft_{(x,y,z)}\phi(\psi^2(v)
\cdot\beta\alpha(z))\cdot\beta\alpha^2(x)
\alpha^3(y)\nonumber\\
&&=\circlearrowleft_{(x,y,z)}[(\psi^2(v)\cdot\beta\alpha(z))\cdot\beta\alpha^2(x)]\cdot\beta\alpha^3(y)+
\circlearrowleft_{(x,y,z)}[(\psi^2(v)\cdot\beta\alpha(z))\cdot\beta\alpha^2(y)]\cdot\beta\alpha^3(x) \mbox{  (\textsl{by \ref{rc4}} )}\nonumber\\
&&=\circlearrowleft_{(x,y,z)}(\phi\psi^2(v)\cdot\beta\alpha(z)\alpha^2(x))\cdot\beta\alpha^3(y)-
\circlearrowleft_{(x,y,z)}[(\psi^2(v)\cdot\beta\alpha(x))\cdot\beta\alpha^2(z)]\cdot\beta\alpha^3(y)\nonumber\\
&&\circlearrowleft_{(x,y,z)}[(\psi^2(v)\cdot\beta\alpha(z))\cdot\beta\alpha^2(y)]\cdot\beta\alpha^3(x) \mbox{  ( \textsl{again by \ref{rc4}} )}\nonumber\\
&&=\circlearrowleft_{(x,y,z)}(\phi\psi^2(v)\cdot\beta\alpha(z)\alpha^2(x))\cdot\beta\alpha^3(y)=
\circlearrowleft_{(x,y,z)}\{(\phi\psi^2(v)\cdot\alpha\beta(x)\alpha^2(y))\cdot
\beta\alpha^3(z)\nonumber
\end{eqnarray}
Similarly, we have
\begin{eqnarray}
&&(\phi\psi^2(v)\cdot\beta\alpha^2(x))\cdot\beta\alpha^2(y)\alpha^3(z)
+(\phi\psi^2(v)\cdot\beta\alpha^2(z))\cdot\beta\alpha^2(y)\alpha^3(x)\nonumber\\
&&+(\phi\psi^2(v)\cdot\beta\alpha^2(y))\cdot\alpha^2\beta(x)\alpha^3(z)
=\phi(\psi^2(v)\cdot\beta\alpha(x))\cdot\beta\alpha^2(y)\alpha^3(z)\nonumber\\
&&+\phi(\psi^2(v)\cdot\beta\alpha(z))\cdot\beta\alpha^2(y)\alpha^3(x)\nonumber+
\phi(\psi^2(v)\cdot\beta\alpha(y))\cdot\alpha^2\beta(x)\alpha^3(z)
\mbox{  ( \textsl{ by \ref{rc5}} )}
\nonumber\\
&=&[(\psi^2(v)\cdot\beta\alpha(x))\cdot\beta\alpha^2(y)]\cdot\beta\alpha^3(z)+
[(\psi^2(v)\cdot\beta\alpha(x))\cdot\beta\alpha^2(z)]\cdot\beta\alpha^3(y)\nonumber\\
&&+[(\psi^2(v)\cdot\beta\alpha(z))\cdot\beta\alpha^2(y)]\cdot\beta\alpha^3(x)+
[(\psi^2(v)\cdot\beta\alpha(z))\cdot\beta\alpha^2(x)]\cdot\beta\alpha^3(y)\nonumber\\
&&+[(\psi^2(v)\cdot\beta\alpha(y))\cdot\beta\alpha^2(x)]\cdot\beta\alpha^3(z)+
[(\psi^2(v)\cdot\beta\alpha(y))\cdot\beta\alpha^2(z)]\cdot\beta\alpha^3(x)\mbox{  (  by \ref{rc4} )}\nonumber\\
&=&[(\psi^2(v)\cdot\beta\alpha(x))\cdot\beta\alpha^2(y)]\cdot\beta\alpha^3(z)+
(\phi\psi^2(v)\cdot\beta\alpha(x)\alpha^2(z))\cdot\beta\alpha^3(y)\nonumber\\&&-
[(\psi^2(v)\cdot\beta\alpha(z))\cdot\beta\alpha^2(x)]\cdot\beta\alpha^3(y)+[(\psi^2(v)\cdot\beta\alpha(z))\cdot\beta\alpha^2(y)]\cdot\beta\alpha^3(x)\nonumber\\&&+
[(\psi^2(v)\cdot\beta\alpha(z))\cdot\beta\alpha^2(x)]\cdot\beta\alpha^3(y)+[(\psi^2(v)\cdot\beta\alpha(y))\cdot\beta\alpha^2(x)]\cdot\beta\alpha^3(z)\nonumber\\&&+
[(\psi^2(v)\cdot\beta\alpha(y))\cdot\beta\alpha^2(z)]\cdot\beta\alpha^3(x)\mbox{  ( \textsl{Again  by \ref{rc4} })}\nonumber\\
&=&[(\psi^2(v)\cdot\beta\alpha(x))\cdot\beta\alpha^2(y)]\cdot\beta\alpha^3(z)+
(\phi\psi^2(v)\cdot\beta\alpha(x)\alpha^2(z))\cdot\beta\alpha^3(y)
\nonumber\\&&+[(\psi^2(v)\cdot\beta\alpha(z))\cdot\beta\alpha^2(y)]\cdot\beta\alpha^3(x)
+[(\psi^2(v)\cdot\beta\alpha(y))\cdot\beta\alpha^2(x)]\cdot\beta\alpha^3(z)\nonumber\\&&+
[(\psi^2(v)\cdot\beta\alpha(y))\cdot\beta\alpha^2(z)]\cdot\beta\alpha^3(x)\nonumber\\
&&=[(\psi^2(v)\cdot\beta\alpha(x))\cdot\beta\alpha^2(y)]\cdot\beta\alpha^3(z)+
\phi^2\psi^2(v)\cdot(\beta\alpha(x)\alpha^2(z))\alpha^3(y)\nonumber\\&&-
(\phi\psi^2(v)\cdot\beta\alpha^2(y))\cdot\beta\alpha^2(x)\alpha^3(z)+[(\psi^2(v)\cdot\beta\alpha(z))\cdot\beta\alpha^2(y)]\cdot\beta\alpha^3(x)\nonumber\\&&
+[(\psi^2(v)\cdot\beta\alpha(y))\cdot\beta\alpha^2(x)]\cdot\beta\alpha^3(z)+
[(\psi^2(v)\cdot\beta\alpha(y))\cdot\beta\alpha^2(z)]\cdot\beta\alpha^3(x)\mbox{  ( \textsl{Again  by \ref{rc4} })}\nonumber\\
&&=[(\psi^2(v)\cdot\beta\alpha(x))\cdot\beta\alpha^2(y)]\cdot\beta\alpha^3(z)+
\phi^2\psi^2(v)\cdot(\beta\alpha(x)\alpha^2(z))\alpha^3(y)\nonumber\\&&
+[(\psi^2(v)\cdot\beta\alpha(z))\cdot\beta\alpha^2(y)]\cdot\beta\alpha^3(x)
\mbox{  (\textsl{ Again  by \ref{rc4} })}\nonumber
\end{eqnarray}
It follows that  $(\ref{brc2})$ and $(\ref{brc3})$ hold and therefore $(V,\phi,\psi, \rho_r)$ is a right BiHom-Jordan $A$-module.\end{proof}

 Similarly we prove the following result.
\begin{thm}\label{thmleft}
Let $(A,\mu,\alpha, \beta)$ be a BiHom-Jordan algebra, $(V,\phi,\psi)$ be a BiHom-module such that $\psi$ is invertible and $\rho_l: A\otimes V\rightarrow V,$ $v\otimes a\mapsto a\cdot v,$ be a bilinear map satisfying
\begin{eqnarray}
\phi\rho_l=\rho_l(\alpha\otimes\phi) \mbox{ and }
\psi\rho_l=\rho_l(\beta\otimes\psi)~~~~~~~~~~~~~~~~~~~~~~~~~~~~~~~~~~~~~~~~~~~~~~~~~~~~~~~~~~~~~~~~~
 \label{lc5}\\
\beta(x)\alpha(y)\cdot\psi(v)=\beta\alpha(x)\cdot(\alpha(y)\cdot v)+\beta\alpha(y)\cdot(\alpha(x)\cdot v),   \mbox{ for all $(x,y)\in A^{\times 2}$ and $v\in V.~~~~$ }  \label{lc4}
\end{eqnarray}
 Then $(V,\phi,\psi,\rho_l)$ is a left BiHom-Jordan $A$-module called
a left special BiHom-Jordan $A$-module.\hfill $\square$
\end{thm}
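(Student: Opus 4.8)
The plan is to imitate, step by step, the proof of Theorem~\ref{thmright}, interchanging the roles of the left and right actions and reversing the order of all arguments. Exactly as there, the morphism conditions for $\rho_l$ are built into the hypothesis~(\ref{lc5}), so it suffices to deduce the two defining identities (\ref{blc2}) and (\ref{blc3}) of a left BiHom-Jordan $A$-module from (\ref{lc5}) and the left special identity~(\ref{lc4}). Throughout, one uses that $\psi$ is invertible: this is what lets us write a module element as $\psi$ of another element before (\ref{lc4}) can be applied to it, and it is precisely the origin of the factor $\psi^{-1}$ occurring in (\ref{blc3}).

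First I would prove the shorter identity~(\ref{blc2}). Inside the cyclic sum $\circlearrowleft_{(x,y,z)}$ I would start from the summand $\alpha\beta^2(x)\alpha^2\beta(y)\cdot(\beta\alpha^2(z)\cdot\phi^3(v))$, write its first factor as $\mu(\beta(\alpha\beta(x)),\alpha(\alpha\beta(y)))$ and its module argument as $\psi$ of a suitable element (using invertibility of $\psi$ together with (\ref{lc5})), then apply (\ref{lc4}) to replace it by a sum of two doubly-nested left products, and finally use (\ref{lc5}) once more to move the twists $\alpha,\beta,\phi,\psi$ into the positions prescribed by (\ref{blc2}). After re-indexing the cyclic variables, the result coincides term by term with $\circlearrowleft_{(x,y,z)}\beta^2\alpha^2(z)\cdot(\alpha\beta(x)\alpha^2(y)\cdot\phi^3(v))$, so the alternating cyclic sum is zero. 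This is the mirror image of the first displayed computation in the proof of Theorem~\ref{thmright}.

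The substance of the proof is identity~(\ref{blc3}), handled in the same way as (\ref{brc3}). I would expand both sides: on the left one unfolds the triple products such as $\beta^2\alpha^2(z)\cdot(\beta\alpha^2(y)\cdot(\alpha^2(x)\cdot\psi^{-1}\phi^3(v)))$ by repeated use of (\ref{lc4}), the factor $\psi^{-1}$ in the innermost slot being exactly what makes (\ref{lc4}) applicable (since (\ref{lc4}) relates $\mu(\beta(x),\alpha(y))\cdot\psi(v)$ to $\beta\alpha(x)\cdot(\alpha(y)\cdot v)+\beta\alpha(y)\cdot(\alpha(x)\cdot v)$, the module argument must lie in the image of $\psi$), and each such step adjusting the $\psi$-weight of the module element accordingly; on the right one first invokes (\ref{lc5}) to extract a $\psi$ and then applies (\ref{lc4}) to each summand. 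Collecting the monomials and cancelling the cross terms generated by the iterated substitutions, the two sides agree. I expect the only genuine difficulty to be the bookkeeping: tracking the exact powers of $\alpha,\beta,\phi,\psi$ attached to every factor and checking that each monomial produced on one side is matched by precisely one monomial on the other. Once this is carried out along the lines of the proof of Theorem~\ref{thmright}, identities (\ref{blc2}) and (\ref{blc3}) hold, so $(V,\phi,\psi,\rho_l)$ is a left BiHom-Jordan $A$-module, which we call a left special BiHom-Jordan $A$-module. $\square$
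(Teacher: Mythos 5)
Your proposal is correct and follows essentially the same route as the paper's own proof: both establish (\ref{blc2}) and (\ref{blc3}) by mirroring the argument of Theorem \ref{thmright}, using the invertibility of $\psi$ and (\ref{lc5}) to write the module argument as $\psi$ of a suitable element so that (\ref{lc4}) can be applied repeatedly, and then cancelling the resulting terms (under the cyclic sum for (\ref{blc2}), and by matching monomials for (\ref{blc3})).
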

\begin{proof} It suffices to prove $(\ref{blc2})$ and $(\ref{blc3}).$  For all $(x,y)\in A^{\times 2}$ and $v\in V,$ we have:
\begin{eqnarray}
&&\circlearrowleft_{(x,y,z)}\{\alpha\beta^2(x)\alpha^2\beta(y)
\cdot(\beta\alpha^2(z)\cdot\phi^3(v))\}=\circlearrowleft_{(x,y,z)}\{
\alpha\beta^2(x)\alpha^2\beta(y)
\cdot\psi(\alpha^2(z)\cdot\psi^{-1}\phi^3(v))\}\nonumber\\
&&=\circlearrowleft_{(x,y,z)}\{\beta^2\alpha^2(x)\cdot[\alpha^2\beta(y)\cdot
(\alpha^2(z)\cdot\psi^{-1}\phi(v))]\}+
\circlearrowleft_{(x,y,z)}\{\beta^2\alpha^2(y)\cdot[\alpha^2\beta(x)\cdot
(\alpha^2(z)\cdot\psi^{-1}\phi(v))]\}\nonumber\\
&& \mbox{  ( \textsl{  by \ref{lc4} })}\nonumber\\
&&=\circlearrowleft_{(x,y,z)}\{\beta^2\alpha^2(x)\cdot
[\alpha\beta(y)\alpha^2(z)\cdot\phi^3(v)]\}
-\circlearrowleft_{(x,y,z)}\{\beta^2\alpha^2(x)\cdot[\beta\alpha^2(z)
\cdot(\alpha^2(y)\cdot\psi^{-1}\phi(v))]\}
\nonumber\\
&&+\circlearrowleft_{(x,y,z)}\{\beta^2\alpha^2(y)\cdot[\alpha^2\beta(x)\cdot
(\alpha^2(z)\cdot\psi^{-1}\phi(v))]\} \mbox{  ( \textsl{Again  by \ref{lc4} })}\nonumber\\
&&\circlearrowleft_{(x,y,z)}\{\beta^2\alpha^2(x)\cdot[\alpha\beta(y)
\alpha^2(z)\cdot\phi^3(v)]\}
=\circlearrowleft_{(x,y,z)}\{\beta^2\alpha^2(z)\cdot(\alpha\beta(x)\alpha^2(y)
\cdot\phi^3(v))\}\nonumber
\end{eqnarray}
Similarly, we have
\begin{eqnarray}
&&\beta^2\alpha(y)\beta\alpha^2(z)\cdot(\beta\alpha^2(x)\cdot\phi^3(v))
+\beta^2\alpha(y)\beta\alpha^2(x)\cdot(\beta\alpha^2(z)\cdot\phi^3(v))\nonumber\\
&&+\beta^2\alpha(x)\beta\alpha^2(z)\cdot(\beta\alpha^2(y)\cdot\phi^3(v))=
\beta^2\alpha(y)\beta\alpha^2(z)\cdot\psi(\alpha^2(x)\cdot\psi^{-1}\phi^3(v))\nonumber\\
&&+\beta^2\alpha(y)\beta\alpha^2(x)\cdot\psi(\alpha^2(z)\cdot\psi^{-1}\phi^3(v))
+\beta^2\alpha(x)\beta\alpha^2(z)\cdot\psi(\alpha^2(y)\cdot\psi^{-1}\phi^3(v))
\mbox{  ( \textsl{  by \ref{lc5} })}
\nonumber\\
&=&\beta^2\alpha^2(y)\cdot[\beta\alpha^2(z)\cdot(\alpha^2(x)\cdot\psi^{-1}
\phi^3(v))]+\beta^2\alpha^2(z)\cdot[\beta\alpha^2(y)\cdot(\alpha^2(x)
\cdot\psi^{-1}\phi^3(v))]\nonumber\\
&&+\beta^2\alpha^2(y)\cdot[\beta\alpha^2(x)\cdot(\alpha^2(z)\cdot\psi^{-1}
\phi^3(v))]+\beta^2\alpha^2(x)\cdot[\beta\alpha^2(y)\cdot(\alpha^2(z)
\cdot\psi^{-1}\phi^3(v))]\nonumber\\
&&+\beta^2\alpha^2(x)\cdot[\beta\alpha^2(z)\cdot(\alpha^2(y)\cdot\psi^{-1}
\phi^3(v))]+\beta^2\alpha^2(z)\cdot[\beta\alpha^2(x)\cdot(\alpha^2(y)
\cdot\psi^{-1}\phi^3(v))] \mbox{  ( \textsl{  by \ref{lc4} })}\nonumber\\
&=&\beta^2\alpha^2(y)\cdot[\beta\alpha^2(z)\cdot(\alpha^2(x)\cdot\psi^{-1}
\phi^3(v))]+\beta^2\alpha^2(z)\cdot[\beta\alpha(y)\alpha^2(x)\cdot
\phi^3(v))]\nonumber\\
&&-\beta^2\alpha^2(z)\cdot[\beta\alpha^2(x)\cdot(\alpha^2(y)\cdot
\psi^{-1}\phi^3(v))]+\beta^2\alpha^2(y)\cdot[\beta\alpha^2(x)\cdot(\alpha^2(z)
\cdot\psi^{-1}\phi^3(v))]\nonumber\\
&&+\beta^2\alpha^2(x)\cdot[\beta\alpha^2(y)\cdot(\alpha^2(z)
\cdot\psi^{-1}\phi^3(v))]+\beta^2\alpha^2(x)\cdot[\beta\alpha^2(z)\cdot(\alpha^2(y)
\cdot\psi^{-1}\phi^3(v))]\nonumber\\
&&+\beta^2\alpha^2(z)\cdot[\beta\alpha^2(x)\cdot(\alpha^2(y)
\cdot\psi^{-1}\phi^3(v))] \mbox{  ( \textsl{ Again  by \ref{lc4} })}\nonumber\\
&=&\beta^2\alpha^2(y)\cdot[\beta\alpha^2(z)\cdot(\alpha^2(x)\cdot\psi^{-1}
\phi^3(v))]+\beta^2\alpha^2(z)\cdot[\beta\alpha(y)\alpha^2(x)\cdot
\phi^3(v))]\nonumber\\
&&+\beta^2\alpha^2(y)\cdot[\beta\alpha^2(x)\cdot(\alpha^2(z)
\cdot\psi^{-1}\phi^3(v))]+\beta^2\alpha^2(x)\cdot[\beta\alpha^2(y)\cdot(\alpha^2(z)
\cdot\psi^{-1}\phi^3(v))]\nonumber\\
&&+\beta^2\alpha^2(x)\cdot[\beta\alpha^2(z)\cdot(\alpha^2(y)
\cdot\psi^{-1}\phi^3(v))]\nonumber\\
&=&\beta^2\alpha^2(y)\cdot[\beta\alpha^2(z)\cdot(\alpha^2(x)\cdot\psi^{-1}
\phi^3(v))]+\beta^2\alpha^2(z)\cdot[\beta\alpha^2(y)\cdot(\alpha^2(x)\cdot
\psi^{-1}\phi^3(v))]\nonumber\\
&&+\beta^2\alpha^2(z)\cdot[\beta\alpha^2(x)\cdot(\alpha^2(y)\cdot\psi^{-1}
\phi^3(v))]+\beta^2\alpha^2(y)\cdot[\beta\alpha^2(x)\cdot(\alpha^2(z)
\cdot\psi^{-1}\phi^3(v))]\nonumber\\
&&+\beta^2\alpha^2(x)\cdot[\beta\alpha^2(y)\cdot(\alpha^2(z)
\cdot\psi^{-1}\phi^3(v))]+\beta^2\alpha^2(x)\cdot[\beta\alpha^2(z)\cdot(\alpha^2(y)
\cdot\psi^{-1}\phi^3(v))] \mbox{  ( \textsl{ Again  by \ref{lc4} })}\nonumber\\
&=&\beta^2\alpha^2(y)\cdot[\beta\alpha^2(z)\cdot(\alpha^2(x)\cdot\psi^{-1}
\phi^3(v))]+\beta^2\alpha^2(z)\cdot[\beta\alpha^2(y)\cdot(\alpha^2(x)\cdot
\psi^{-1}\phi^3(v))]\nonumber\\
&&+\beta^2\alpha^2(z)\cdot[\beta\alpha^2(x)\cdot(\alpha^2(y)\cdot\psi^{-1}
\phi^3(v))]+\beta^2\alpha^2(y)\cdot[\beta\alpha^2(x)\cdot(\alpha^2(z)
\cdot\psi^{-1}\phi^3(v))]\nonumber\\
&&+\beta^2\alpha^2(x)\cdot[\beta\alpha^2(y)\cdot(\alpha^2(z)
\cdot\psi^{-1}\phi^3(v))]+\beta^2\alpha(x)\beta\alpha^2(z)\cdot(\beta\alpha^2(y)
\cdot\phi^3(v))\nonumber\\
&&-\beta^2\alpha^2(z)\cdot[\beta\alpha^2(x)\cdot(\alpha^2(y)\cdot\psi^{-1}\phi^3(v))] \mbox{  ( \textsl{ Again  by \ref{lc4} })}\nonumber\\
&=&\beta^2\alpha^2(y)\cdot[\beta\alpha^2(z)\cdot(\alpha^2(x)\cdot\psi^{-1}
\phi^3(v))]+\beta^2\alpha^2(z)\cdot[\beta\alpha^2(y)\cdot(\alpha^2(x)\cdot
\psi^{-1}\phi^3(v))]\nonumber\\
&&+\beta^2\alpha^2(y)\cdot[\beta\alpha^2(x)\cdot(\alpha^2(z)
\cdot\psi^{-1}\phi^3(v))]+\beta^2\alpha^2(x)\cdot[\beta\alpha^2(y)\cdot(\alpha^2(z)
\cdot\psi^{-1}\phi^3(v))]\nonumber\\
&&+\beta^2\alpha(x)\beta\alpha^2(z)\cdot(\beta\alpha^2(y)
\cdot\phi^3(v))\nonumber\\
&=&\beta^2\alpha^2(y)\cdot[\beta\alpha^2(z)\cdot(\alpha^2(x)\cdot\psi^{-1}
\phi^3(v))]+\beta^2\alpha^2(z)\cdot[\beta\alpha^2(y)\cdot(\alpha^2(x)\cdot
\psi^{-1}\phi^3(v))]\nonumber\\
&&+\beta^2\alpha^2(y)\cdot[\beta\alpha^2(x)\cdot(\alpha^2(z)
\cdot\psi^{-1}\phi^3(v))]+\beta^2\alpha^2(x)\cdot[\beta\alpha^2(y)\cdot(\alpha^2(z)
\cdot\psi^{-1}\phi^3(v))]\nonumber\\
&&+((\beta^2(x)\beta\alpha(z))\beta\alpha^2(y))
\cdot\psi\phi^3(v)-\beta^2\alpha^2(y)\cdot[\alpha\beta(x)\alpha^2(z)\cdot\phi^3(v)]
\mbox{  ( \textsl{ Again  by \ref{lc4} })}\nonumber\\
&=&\beta^2\alpha^2(z)\cdot[\beta\alpha^2(y)\cdot(\alpha^2(x)\cdot
\psi^{-1}\phi^3(v))]+\beta^2\alpha^2(x)\cdot[\beta\alpha^2(y)\cdot(\alpha^2(z)
\cdot\psi^{-1}\phi^3(v))]\nonumber\\
&&+((\beta^2(x)\beta\alpha(z))\beta\alpha^2(y))
\cdot\psi\phi^3(v) \mbox{  ( \textsl{ Again  by \ref{lc4} })}\nonumber
\end{eqnarray}
It follows that  $(\ref{blc2})$ and $(\ref{blc3})$ hold and therefore $(V,\phi,\psi, \rho_l)$ is a left BiHom-Jordan $A$-module.\end{proof}

It is well known that the plus algebra of any BiHom-alternative algebra is a BiHom-Jordan algebra \cite{chtioui1}.
The next result shows that any left (resp. right) BiHom-alternative module  is also a left (resp. right) module over its plus BiHom-algebra.
\begin{prop}
Let $(A,\mu,\alpha, \beta)$ be a BiHom-altenative algebra and $(V,\phi,\psi)$ be a BiHom-module.
\begin{enumerate}
\item  If $(V,\phi,\psi)$ is a right BiHom-alternative $A$-module with the structure map $\rho_r$
then $(V,\phi,\psi)$ is a right special BiHom-Jordan $A^+$-module with the same
structure map $\rho_r.$
\item If $(V,\phi,\psi)$ is a left BiHom-alternative $A$-module with the structure map $\rho_l$ such that $\psi$ is invertible
then $(V,\phi,\psi)$ is a left special BiHom-Jordan $A^+$-module with the same structure map $\rho_l.$
\end{enumerate}
\end{prop}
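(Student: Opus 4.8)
The plan is to reduce both assertions to the sufficient conditions for special BiHom-Jordan modules proved in Theorem~\ref{thmright} and Theorem~\ref{thmleft}. Concretely, for part~(1) it is enough to check that $\rho_r$, regarded now as a structure map over the BiHom-Jordan algebra $A^+$, satisfies hypotheses~(\ref{rc5}) and~(\ref{rc4}), and for part~(2) that $\rho_l$ satisfies~(\ref{lc5}) and~(\ref{lc4}). The morphism conditions~(\ref{rc5}) and~(\ref{lc5}) hold by hypothesis (they say exactly that $\rho_r$, $\rho_l$ are morphisms of BiHom-modules) and are unaffected by passing from $A$ to $A^+$, since $A^+$ carries the same twisting maps $\alpha,\beta$. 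Thus the whole content lies in~(\ref{rc4}) and~(\ref{lc4}). A preliminary observation I would record is that, writing $\mu^+(u,w)=\mu(u,w)+\mu(\alpha^{-1}\beta(w),\beta^{-1}\alpha(u))$ for the multiplication of $A^+$ and using $\alpha\beta=\beta\alpha$,
\[
\mu^+\big(\beta(x),\alpha(y)\big)=\mu\big(\beta(x),\alpha(y)\big)+\mu\big(\beta(y),\alpha(x)\big);
\]
recall also that $(A^+,\mu^+,\alpha,\beta)$ is a BiHom-Jordan algebra by~\cite{chtioui1}.

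For part~(1) I would expand the right BiHom-alternativity identity $as_{V_{\phi,\psi}}(v,\beta(x),\alpha(y))=-as_{V_{\phi,\psi}}(v,\beta(y),\alpha(x))$ by means of the defining formula $as_{V_{\phi,\psi}}(v,a,b)=\rho_r(\rho_r(v,a),\beta(b))-\rho_r(\phi(v),\mu(a,b))$ for the module BiHom-associator on $V\otimes A\otimes A$. Collecting the two terms of the form $\rho_r(\phi(v),\mu(\cdot,\cdot))$, using the identity above and $\alpha\beta=\beta\alpha$, one obtains precisely
\[
\rho_r\big(\phi(v),\mu^+(\beta(x),\alpha(y))\big)=\rho_r\big(\rho_r(v,\beta(x)),\beta\alpha(y)\big)+\rho_r\big(\rho_r(v,\beta(y)),\alpha\beta(x)\big),
\]
which is condition~(\ref{rc4}) for $A^+$ with the same $\rho_r$. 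Theorem~\ref{thmright} then gives that $(V,\phi,\psi,\rho_r)$ is a right special BiHom-Jordan $A^+$-module.

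Part~(2) is completely parallel: one expands the left BiHom-alternativity identity $as_{V_{\phi,\psi}}(\beta(x),\alpha(y),v)=-as_{V_{\phi,\psi}}(\beta(y),\alpha(x),v)$ using $as_{V_{\phi,\psi}}(a,b,v)=\rho_l(\mu(a,b),\psi(v))-\rho_l(\alpha(a),\rho_l(b,v))$, groups the two terms $\rho_l(\mu(\cdot,\cdot),\psi(v))$ into a single $\rho_l(\mu^+(\beta(x),\alpha(y)),\psi(v))$, and arrives at~(\ref{lc4}) for $A^+$; since $\psi$ is assumed invertible, Theorem~\ref{thmleft} applies and yields that $(V,\phi,\psi,\rho_l)$ is a left special BiHom-Jordan $A^+$-module. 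I expect the only real difficulty to be bookkeeping: one must align the various composites of $\alpha$ and $\beta$ occurring in~(\ref{rc4}) and~(\ref{lc4}) with those produced by the module associator, which is exactly where $\alpha\beta=\beta\alpha$ and the precise form of $\mu^+$ enter; apart from that, the proof is a routine rewriting.
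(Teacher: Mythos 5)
Your proposal is correct and follows essentially the same route as the paper's own proof: reduce the statement to the sufficient conditions (\ref{rc4}) and (\ref{lc4}) of Theorems \ref{thmright} and \ref{thmleft}, and obtain these by summing the two terms of the right (resp.\ left) BiHom-alternative module identity, using $\alpha\beta=\beta\alpha$ and $\mu'(\beta(x),\alpha(y))=\mu(\beta(x),\alpha(y))+\mu(\beta(y),\alpha(x))$. The paper performs exactly this rewriting, likewise treating (\ref{rc5}) and (\ref{lc5}) as automatic.
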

\begin{proof}It suffices to prove (\ref{rc4}) and (\ref{lc4}).
\begin{enumerate}
\item If $(V,\phi,\psi)$ is a right BiHom-alternative $A$-module with the structure map $\rho_r,$ then for all $(x,y,v)\in A\times A\times V,$
$as_{V_{\phi,\psi}}(v,\beta(x),\alpha(y))=-as_{V_{\phi,\psi}}(v,\beta(y),\alpha(x)$  i.e.  $\phi(v)\cdot\beta(x)\alpha(y)+\phi(v)\cdot \beta(y)\alpha(x)=(v\cdot \beta(x))\cdot\beta\alpha(y)+(v\cdot \beta(y))\cdot\beta\alpha(x).$ Thus
$\phi(v)\cdot(\beta(x)\ast \alpha(y))=(v\cdot \beta(x))\cdot\beta\alpha(y)+(v\cdot \beta(y))\cdot\beta\alpha(x).$ Therefore $(V,\phi,\psi)$ is a right special BiHom-Jordan $A^+$-module by Theorem \ref{thmright}.
\item If $(V,\phi,\psi)$ is a left BiHom-alternative $A$-module with the structure map $\rho_l,$  then for all
$(x,y,v)\in A\times A\times V,$
$as_{V_{\phi,\psi}}(\beta(x),\alpha(y),v)=-as_{V_{\phi,\psi}}(\beta(y),\alpha(x),v)$  and then $\beta(x)\alpha(y)\cdot\psi(v)+\beta(y)\alpha(x)\psi(v)=
\alpha\beta(x)\cdot(\alpha(y)\cdot v)+\alpha\beta(y)\cdot(\alpha(x)\cdot v).$  Thus
$(\beta(x)\ast \alpha(y)\cdot\psi(v)=\alpha\beta(x)\cdot(\alpha(y)\cdot v)+\alpha\beta(y)\cdot(\alpha(x)\cdot v).$   Therefore  $(V,\phi,\psi)$ is a left special BiHom-Jordan $A^+$-module
by Theorem \ref{thmleft}.
\end{enumerate}
\end{proof}
Now, we give the definition of a BiHom-Jordan  bimodule.
\begin{defn}
Let $(A,\mu,\alpha, \beta)$ be a BiHom-Jordan algebra. \\
A BiHom-Jordan $A$-bimodule is a BiHom-module $(V,\phi,\psi)$  with a left structure map $\rho_l:A\otimes V\longrightarrow V,$ $a\otimes v\mapsto a\cdot v$ and a right structure map $\rho_r:V\otimes A\longrightarrow V,$ $v\otimes a\mapsto v\cdot a$,
such that the following conditions hold:
\begin{eqnarray}
&&\rho_l(\beta\otimes\phi)=\rho_r(\psi\otimes\alpha)\tau_1\label{r3}\\
&& \circlearrowleft_{(x,y,z)}as_{V_{\phi,\psi}}(\mu(\beta^2(x),\alpha\beta(y)),\phi^2\psi(v),\alpha^3(z))=0\label{r4}\\
&&as_{V_{\phi,\psi}}(\psi^2(v)\cdot\beta\alpha(x),\beta\alpha^2(y),\alpha^3(z)) + as_{V_{\phi,\psi}}(\psi^2(v)\cdot\beta\alpha(z),\beta\alpha^2(y),\alpha^3(x)) \nonumber\\
&& + as_{V_{\phi,\psi}}(\mu(\beta^2(x),\beta\alpha(z)),\beta\alpha^2(y),\phi^3(v))=0 \label{r5}
\end{eqnarray}
 for all $x,y,z\in A$ and $v\in V.$
\end{defn}

\begin{rmk}\label{rem1}
\begin{enumerate}
\item If $\alpha=\beta=Id_A$ and $\phi=\psi=Id_V$ then $V$ is reduced to the so-called Jordan module of the Jordan algebra $(A,\mu)$ \cite{Jacob1, Jacob2}.
\item Clearly, a BiHom-Jordan $A$-bimodule is a right
BiHom-Jordan module. Furthermore, it is a left BiHom-Jordan module if $\psi$ is invertible.
\end{enumerate}
\end{rmk}
\begin{ex} \label{e3} Here are some examples of BiHom-Jordan bimodules.
\begin{enumerate}
\item Let $(A,\mu,\alpha,\beta)$ be a BiHom-Jordan algebra. Then $(A,\alpha, \beta)$ is a BiHom-Jordan $A$-bimodule where the structure maps are $\rho_l=\rho_r=\mu.$
More generally, if $B$ is a Two-sided  BiHom-ideal of $(A,\mu,\alpha,\beta),$  then $(B, \alpha, \beta)$ is a BiHom-Jordan $A$-bimodule where the structure maps are $\rho_l(a,x)=\mu(a,x)=\mu(x,a)=\rho_r(x,a)$ for all $(a,x)\in A\times B.$
\item If $(A,\mu)$ is a Jordan algebra and $M$ is a Jordan $A$-bimodule \cite{Jacob2} in the usual sense then $(M,Id_M, Id_M)$ is a BiHom-Jordan $\mathbb{A}$-bimodule where $\mathbb{A}=(A,\mu, Id_A, Id_A)$ is a BiHom-Jordan algebra.
\end{enumerate}
\end{ex}
As BiHom-alternative algebra case, in order to give another example of BiHom-Jordan bimodules, let us consider the following
\begin{defn} An abelian extension of BiHom-Jordan algebras is a short exact sequence of BiHom-Jordan algebras
$$0\longrightarrow(V,\alpha_V,\beta_V)\stackrel{\mbox{i}}{\longrightarrow} (A,\mu_A,\alpha_A,\beta_A)\stackrel{\mbox{$\pi$}}{\longrightarrow} (B,\mu_B,\alpha_B,\beta_B)\longrightarrow 0$$
 where $(V,\alpha_V,\beta_B)$ is a trivial BiHom-Jordan algebra, $i$ and $\pi$ are morphisms of BiHom-algebras. Furthermore,  if there exists a morphism
$s :(B,\mu_B,\alpha_B,\beta_B)\longrightarrow (A,\mu_A,\alpha_A, \beta_A)$ such that $\pi\circ s = id_B$ then the abelian extension is said to be split and $s$ is called a section of $\pi.$
\end{defn}
\begin{ex}
Given an abelian extension as in the previous definition,  the BiHom-module
$(V,\alpha_V, \beta_V)$ inherits a structure of a BiHom-Jordan $B$-bimodule and the actions of the BiHom-algebra $(B,\mu_B,\alpha_B,\beta_B)$  on $V$ are as follows. For any $x\in B,$ there exist $\tilde{x}\in A$ such that $x=\pi(\tilde{x}).$ Let $x$ acts on $v\in V$ by $x\cdot v:=\mu_A(\tilde{x},i(v))$ and $v\cdot x:=\mu_A(i(v),\tilde{x}).$
These are well-defined, as another lift $\tilde{x'}$
of $x$ is written $\tilde{x'}=\tilde{x}+v'$ for some $v'\in V$ and thus
$x\cdot v=\mu_A(\tilde{x},i(v))=\mu_A(\tilde{x'},i(v))$ and $v\cdot x=\mu_A(i(v),\tilde{x})=\mu_A(i(v),\tilde{x'})$ because $V$ is trivial. The actions property follow from the BiHom-Jordan identity.
In case these actions of $B$ on $V$  are trivial, one speaks of a central extension.
\end{ex}
The next result shows that a special left and right BiHom-Jordan module has a BiHom-Jordan bimodule structure  under a specific condition.
\begin{thm}\label{ModPlus}
Let $(A,\mu,\alpha,\beta)$ be a regular BiHom-Jordan algebra and $(V,\phi,\psi)$ be both a left  and a right special BiHom-Jordan $A$-module with the structure
maps $\rho_1$ and $\rho_2$ respectively such that $\phi$ is invertible and the BiHom-associativity (or operator BiHom-commutativity) condition holds
\begin{eqnarray}
\rho_2\circ(\rho_1\otimes\beta)=\rho_1\circ(\alpha\otimes\rho_2) \label{associat}
\end{eqnarray}
 Define the bilinear maps $\rho_l: A\otimes V\longrightarrow V$ and $\rho_r: V\otimes A\longrightarrow V$ by
\begin{eqnarray}
\rho_l=\rho_1+\rho_2(\psi\phi^{-1}\otimes\alpha\beta^{-1})\circ\tau_1 \mbox{ and } \rho_r=\rho_1(\beta\alpha^{-1}\otimes\phi\psi^{-1})\circ\tau_2+\rho_2 \label{StrucPlus}
\end{eqnarray}
Then $(V, \phi,\psi,\rho_l,\rho_r)$ is a BiHom-Jordan $A$-bimodule.
\end{thm}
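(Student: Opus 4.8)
The plan is to verify the three defining identities \eqref{r3}, \eqref{r4} and \eqref{r5} of a BiHom-Jordan $A$-bimodule for the pair $(\rho_l,\rho_r)$ introduced in \eqref{StrucPlus}, proceeding in the same spirit as the algebra-level computation of Proposition \ref{assoJord}: the maps $\rho_l$ and $\rho_r$ here play the role that the symmetrized product $\mu'$ plays there, now assembled out of the two special actions $\rho_1$ and $\rho_2$.

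First I would record the bookkeeping facts used throughout. Since $A$ is regular, $\alpha,\beta$ are invertible; $\phi$ is invertible by hypothesis; and $\psi$ is invertible because $(V,\phi,\psi,\rho_1)$ is in particular a \emph{left} special BiHom-Jordan module (Theorem \ref{thmleft} requires this). Moreover $\alpha\beta=\beta\alpha$, $\phi\psi=\psi\phi$, and $\rho_1,\rho_2$ intertwine the twisting maps via \eqref{lc5} and \eqref{rc5}. From these, a direct computation gives $\phi\rho_l=\rho_l(\alpha\otimes\phi)$, $\psi\rho_l=\rho_l(\beta\otimes\psi)$, $\phi\rho_r=\rho_r(\phi\otimes\alpha)$, $\psi\rho_r=\rho_r(\psi\otimes\beta)$, so $\rho_l$ and $\rho_r$ are morphisms of BiHom-modules. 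Identity \eqref{r3} is then almost immediate: evaluating $\rho_l(\beta\otimes\phi)$ and $\rho_r(\psi\otimes\alpha)\tau_1$ on $a\otimes v$, the powers of $\alpha,\beta,\phi,\psi$ cancel and both sides reduce to $\rho_1(\beta(a),\phi(v))+\rho_2(\psi(v),\alpha(a))$; the twists in \eqref{StrucPlus} were chosen precisely so that this balances.

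The substance is \eqref{r4} and \eqref{r5}. For each of these I would expand the module BiHom-associator $as_{V_{\phi,\psi}}$ according to its three defining formulas, substitute $\rho_l=\rho_1+\rho_2(\psi\phi^{-1}\otimes\alpha\beta^{-1})\tau_1$ and $\rho_r=\rho_1(\beta\alpha^{-1}\otimes\phi\psi^{-1})\tau_2+\rho_2$, and multiply out. The resulting terms I would sort into four groups and show each vanishes: (i) the terms built only from $\rho_1$ collapse, after using \eqref{lc5} and the commutativity of $\alpha,\beta$, to the left-special identities \eqref{blc2}/\eqref{blc3}, valid by Theorem \ref{thmleft}; (ii) the terms built only from $\rho_2$ collapse symmetrically to the right-special identities \eqref{brc2}/\eqref{brc3} of Theorem \ref{thmright}; (iii) the contributions in which an action hits a $\mu$-product of $A$-elements are rewritten as sums of iterated actions by the special-module axioms \eqref{rc4} and \eqref{lc4} (used here exactly as BiHom-associativity of $\mu$ is used in Proposition \ref{assoJord}) and then reorganized, via the BiHom-commutativity and the BiHom-Jordan identity of $A$ (Definition \ref{BiHom jordan} and its cyclic form), into vanishing cyclic sums; and (iv) the genuinely mixed terms, where a $\rho_1$-action is followed by a $\rho_2$-action or conversely, are interchanged using the operator BiHom-commutativity hypothesis \eqref{associat}, $\rho_2(\rho_1\otimes\beta)=\rho_1(\alpha\otimes\rho_2)$, after which they cancel in pairs.

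I expect group (iv) to be the main obstacle: one must invoke \eqref{associat} at precisely the right places and keep all $\alpha,\beta,\phi,\psi$-twists aligned, so that once the special-module axioms \eqref{rc4}, \eqref{lc4} have been applied the leftover terms really do assemble into the cyclic combinations \eqref{blc2}, \eqref{blc3}, \eqref{brc2}, \eqref{brc3} and into instances of the BiHom-Jordan identity of $A$. As with Proposition \ref{assoJord}, this is a long but mechanical verification once the grouping is fixed; I would carry it out identity by identity, first \eqref{r4} and then \eqref{r5}, and conclude that $(V,\phi,\psi,\rho_l,\rho_r)$ is a BiHom-Jordan $A$-bimodule.
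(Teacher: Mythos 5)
Your plan is correct and follows essentially the same route as the paper's proof: verify \eqref{r3} directly from the choice of twists in \eqref{StrucPlus}, then establish \eqref{r4} and \eqref{r5} by expanding the module BiHom-associators, cancelling the mixed $\rho_1$/$\rho_2$ terms with the operator BiHom-commutativity \eqref{associat}, reorganizing via \eqref{rc4} and \eqref{lc4}, and concluding with the special-module identities \eqref{brc2}, \eqref{blc2}, \eqref{brc3}, \eqref{blc3}. The only cosmetic difference is that the paper never needs to invoke the BiHom-Jordan identity of $A$ itself in these two verifications, the special-module axioms and \eqref{associat} sufficing.
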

\begin{proof}It is clear that $\rho_l$ and $\rho_r$ are structures maps. To prove (\ref{r3}), (\ref{r4}) and (\ref{r5}), let put  $\rho_l(x,v)=x\cdot v+\psi\phi^{-1}(v)\cdot\alpha\beta^{-1}(x)$ and $\rho_r(v,x)=\beta\alpha^{-1}(x)\cdot\phi\psi^{-1}(v)+v\cdot x$ for all $(x,v)\in A\times V.$
\\
First, for all $(x,v)\in A\times V$ we obtain $\rho_l(\beta(x),\phi(v))=\beta(x)\cdot\phi(v)+\psi(v)\cdot\alpha(x)
=\rho_r(\psi(v),\alpha(x))$ and thus, we get $(\ref{r3}).$ Next, let $(x,y,z)\in A^3$ and $v\in V.$ Then
\begin{eqnarray}
&&as_{V_{\phi,\psi}}(\beta^2(x)\alpha\beta(y),\phi^2\psi(v),\alpha^3(z))\nonumber\\
&=&\rho_r(\rho_l(\beta^2(x)\alpha\beta(y),\phi^2\psi(v)),\beta\alpha^3(z))
-\rho_l(\alpha\beta^2(x)\alpha^2\beta(y),\rho_r(\phi^2\psi(v),\alpha^3(z)))
\nonumber\\
&=&\rho_r(\beta^2(x)\alpha\beta(y)\cdot\phi^2\psi(v),\beta\alpha^3(z)
+\rho_r(\psi^2\phi(v)\cdot\alpha\beta(x)\alpha^2(y),\beta\alpha^3(z))
\nonumber\\
&&-\rho_l(\alpha\beta^2(x)\alpha^2\beta(y)\phi^2\psi(v)\cdot\alpha^3(z))
-\rho_l(\alpha\beta^2(x)\alpha^2\beta(y),\beta\alpha^2(z)\cdot\phi^3(v))
\mbox{ ( by (\ref{StrucPlus}) )}\nonumber\\
&=&[\beta^2(x)\alpha\beta(y)\cdot\phi^2\psi(v)]\cdot\beta\alpha^3(z)
+\beta^2\alpha^2(z)\cdot[\alpha\beta(x)\alpha^2(y)\cdot\phi^3(v)]\nonumber\\
&&+[\psi^2\phi(v)\cdot\alpha\beta(x)\alpha^2(y)]\cdot\beta\alpha^3(z)
+\beta^2\alpha^2(z)\cdot[\phi^2\psi(v)\cdot\alpha^2(x)\alpha^3\beta^{-1}(y)]\nonumber\\
&&-\alpha\beta^2(x)\alpha^2\beta(y)\cdot(\phi^2\psi(v)\cdot\alpha^3(z))-
(\phi\psi^2(v)\cdot\beta\alpha^2(z))\cdot\alpha^2\beta(x)\alpha^3(y)\nonumber\\
&&-\alpha\beta^2(x)\alpha^2\beta(y)\cdot(\beta\alpha^2(z)\cdot\phi^3(v))-
(\beta^2\alpha(z)\cdot\psi\phi^2(v))\cdot\alpha^2\beta(x)\alpha^3(y)
\nonumber\\
&& \mbox{ ( again by (\ref{StrucPlus}) )}\nonumber\\
&=&\{[\psi^2\phi(v)\cdot\alpha\beta(x)\alpha^2(y)]\cdot\beta\alpha^3(z)
 -(\phi\psi^2(v)\cdot\beta\alpha^2(z))\cdot\alpha^2\beta(x)\alpha^3(y) \}
 \nonumber\\
&&+\{\beta^2\alpha^2(z)\cdot[\alpha\beta(x)\alpha^2(y)\cdot\phi^3(v)]-
 \alpha\beta^2(x)\alpha^2\beta(y)\cdot(\beta\alpha^2(z)\cdot\phi^3(v))\}
 \nonumber\\
 &&\{[\beta^2(x)\alpha\beta(y)\cdot\phi^2\psi(v)]\cdot\beta\alpha^3(z)
 -\alpha\beta^2(x)\alpha^2\beta(y)\cdot(\phi^2\psi(v)\cdot\alpha^3(z)) \}
 \nonumber\\
 &&\{\beta^2\alpha^2(z)\cdot[\phi^2\psi(v)\cdot\alpha^2(x)\alpha^3\beta^{-1}(y)]-
 (\beta^2\alpha(z)\cdot\psi\phi^2(v))\cdot\alpha^2\beta(x)\alpha^3(y)\}
 \nonumber\\
 && \mbox{(rearranging terms )}\nonumber\\
 &=&\{[\psi^2\phi(v)\cdot\alpha\beta(x)\alpha^2(y)]\cdot\beta\alpha^3(z)
 -(\phi\psi^2(v)\cdot\beta\alpha^2(z))\cdot\alpha^2\beta(x)\alpha^3(y) \}
 \nonumber\\
 &&+\{\beta^2\alpha^2(z)\cdot[\alpha\beta(x)\alpha^2(y)\cdot\phi^3(v)]-
 \alpha\beta^2(x)\alpha^2\beta(y)\cdot(\beta\alpha^2(z)\cdot\phi^3(v))\}
 \nonumber\\
 && \mbox{( using (\ref{associat})  )}\nonumber
\end{eqnarray}
Hence
\begin{eqnarray}
&&\circlearrowleft_{(x,y,z)}as_{V_{\phi,\psi}}(\beta^2(x)\alpha\beta(y),\phi^2\psi(v),\alpha^3(z))\nonumber\\
&=&\circlearrowleft_{(x,y,z)}\{[\psi^2\phi(v)\cdot\alpha\beta(x)\alpha^2(y)]\cdot\beta\alpha^3(z)
 -(\phi\psi^2(v)\cdot\beta\alpha^2(z))\cdot\alpha^2\beta(x)\alpha^3(y) \}
 \nonumber\\
 &&+\circlearrowleft_{(x,y,z)}\{\beta^2\alpha^2(z)\cdot[\alpha\beta(x)\alpha^2(y)\cdot\phi^3(v)]-
 \alpha\beta^2(x)\alpha^2\beta(y)\cdot(\beta\alpha^2(z)\cdot\phi^3(v))\}
 \nonumber\\
 &=&0 \mbox{ ( by (\ref{brc2}) and (\ref{blc2}) ).}\nonumber
\end{eqnarray}
Thus, we get (\ref{r4}).
\\
Now, to prove (\ref{r5}), let compute each of its three terms.
\begin{eqnarray}
&&as_{V_{\phi,\psi}}(\rho_r(\psi^2(v),\beta\alpha(x)),\beta\alpha^2(y),\alpha^3(z))\nonumber\\
&=&\rho_r(\rho_r(\beta^2(x)\cdot\phi\psi(v),\beta\alpha^2(y)),\beta\alpha^3(z))+\rho_r(\rho_r(\psi^2(v)\cdot\beta\alpha(x),\beta\alpha^2(y)),\beta\alpha^3(z))\nonumber\\
&&-\rho_r(\alpha\beta^2(x)\cdot\phi^2\psi(v),\beta\alpha^2(y)\alpha^3(z)
-\rho_r(\phi\psi^2(v)\cdot\beta\alpha^2(x),\beta\alpha^2(y)\alpha^3(z)
\nonumber\\
&=&\rho_r(\beta^2\alpha(y)\cdot(\alpha\beta(x)\cdot\phi^2(v))+
(\beta^2(x)\cdot\phi\psi(v))\cdot\beta\alpha^2(y),\beta\alpha^3(z))
\nonumber\\
&&+\rho_r(\beta^2\alpha(y)\cdot(\phi\psi(v)\cdot\alpha^2(x))+(\psi^2(v)
\cdot\beta\alpha(x))\cdot\beta\alpha^2(y),\beta\alpha^3(z))\nonumber\\
&&-\beta^2\alpha(y)\beta\alpha^2(z)\cdot(\alpha^2\beta(x)\cdot\phi^3(v))
-(\alpha\beta^2(x)\cdot\phi^2\psi(v))\cdot\beta\alpha^2(y)\alpha^3(z)
\nonumber\\
&&-\beta^2\alpha(y)\beta\alpha^2(z)\cdot(\phi^2\psi(v)\cdot\alpha^3(x))
-(\phi\psi^2(v)\cdot\beta\alpha^2(x))\cdot\beta\alpha^2(y)\alpha^3(z)
\nonumber
\end{eqnarray}
\begin{eqnarray}
&=&  \beta^2\alpha^2(z)\cdot[\beta\alpha^2(y)\cdot(\alpha^2(x)\cdot
\phi^3\psi^{-1}(v))]+\beta^2\alpha^2(z)\cdot[(\alpha\beta(x)\cdot\phi^3(v))\cdot\alpha^3(y)]             \nonumber\\
&&[\beta^2\alpha(y)\cdot(\alpha\beta(x)\cdot\phi^3(v))]\cdot\beta\alpha^3(z)
+[(\beta^2(x)\cdot\phi\psi(v))\cdot\beta\alpha^2(y)]\cdot\beta\alpha^3(z)
\nonumber\\
&&+\beta^2\alpha^2(z)\cdot[\beta\alpha^2(y)\cdot(\phi^2(v)\cdot\alpha^3
\beta^{-1}(x))]+\beta^2\alpha^2(z)\cdot[(\phi\psi(v)\cdot\alpha^2(x))\cdot\alpha^3(y)]
\nonumber\\
&&+ [\beta^2\alpha(y)\cdot(\phi\psi(v)\cdot\alpha^2(x))]\cdot\beta\alpha^3(z)+
[(\psi^2(v)\cdot\beta\alpha(x))\cdot\beta\alpha^2(y)]\cdot\beta\alpha^3(z) \nonumber\\
&&-\beta^2\alpha(y)\beta\alpha^2(z)\cdot(\alpha^2\beta(x)\cdot\phi^3(v))
-\underbrace{(\alpha\beta^2(x)\cdot\phi^2\psi(v))\cdot\beta\alpha^2(y)
\alpha^3(z)}_{A}
\nonumber\\
&&-\underbrace{\beta^2\alpha(y)\beta\alpha^2(z)\cdot(\phi^2\psi(v)\cdot
\alpha^3(x))}_{B}
-(\phi\psi^2(v)\cdot\beta\alpha^2(x))\cdot\beta\alpha^2(y)\alpha^3(z)
\nonumber
\end{eqnarray}
Observe that
\begin{eqnarray}
A&&=\phi(\beta^2(x)\cdot\phi\psi(v))\cdot\beta(\alpha^2(x))
\alpha(\alpha^2(x))\nonumber\\
&&=[(\beta^2(x)\cdot\psi\phi(v))\cdot\beta\alpha^2(y)]\cdot\beta\alpha^3(z)
+[(\beta^2(x)\cdot\phi\psi(v))\cdot\beta\alpha^2(z)]\cdot\beta\alpha^3(y)
\mbox{ ( by (\ref{rc4}) ) }\nonumber\\
&&=[(\beta^2(x)\cdot\psi\phi(v))\cdot\beta\alpha^2(y)]\cdot\beta\alpha^3(z)
+[\alpha\beta^2(x)\cdot(\phi\psi(v)\cdot\alpha^2(z)]\cdot\beta\alpha^3(y)
\mbox{  ( by (\ref{associat}) )}\nonumber\\
&&=[(\beta^2(x)\cdot\psi\phi(v))\cdot\beta\alpha^2(y)]\cdot\beta\alpha^3(z)
+\alpha^2\beta^2(x)\cdot[(\phi\psi(v)\cdot\alpha^2(z))\cdot\alpha^3(y)]
\mbox{  (again  by (\ref{associat}) )}\nonumber\\
B&&=\beta^2\alpha(y)\beta\alpha^2(z)\cdot\psi(\phi^2(v)\cdot\beta^{-1}\alpha^3(x))
\nonumber\\
&&=\beta^2\alpha^2(y)\cdot[\beta\alpha^2(z)\cdot(\phi^2(v)\cdot
\beta^{-1}\alpha^3(x)]+\beta^2\alpha^2(z)\cdot[\beta\alpha^2(y)
\cdot(\phi^2(v)\cdot\beta^{-1}\alpha^3(x))]\mbox{ ( by (\ref{lc4}) )}\nonumber\\
&&=\beta^2\alpha^2(z)\cdot[(\beta\alpha(z)\cdot\phi^2(v))\cdot\alpha^3(x)]
+\beta^2\alpha^2(z)\cdot[\beta\alpha^2(y)
\cdot(\phi^2(v)\cdot\beta^{-1}\alpha^3(x))]\mbox{ ( by (\ref{associat}) )}\nonumber\\
&&=[\alpha\beta^2(y)\cdot(\beta\alpha(z)\cdot\phi^2(v))]\cdot\beta\alpha^3(x)
+\beta^2\alpha^2(z)\cdot[\beta\alpha^2(y)
\cdot(\phi^2(v)\cdot\beta^{-1}\alpha^3(x))]\mbox{ ( again by (\ref{associat}) )}\nonumber
\end{eqnarray}
Thus, if we replace $A$ and $B,$ we get:
\begin{eqnarray}
&&as_{V_{\phi,\psi}}(\rho_r(\psi^2(v),\beta\alpha(x)),\beta\alpha^2(y),\alpha^3(z))\nonumber\\
&=&\beta^2\alpha^2(z)\cdot[\beta\alpha^2(y)\cdot(\alpha^2(x)\cdot
\phi^3\psi^{-1}(v))]+\beta^2\alpha^2(z)\cdot[(\alpha\beta(x)\cdot\phi^3(v))\cdot\alpha^3(y)]             \nonumber\\
&&[\beta^2\alpha(y)\cdot(\alpha\beta(x)\cdot\phi^3(v))]\cdot\beta\alpha^3(z)
+[(\beta^2(x)\cdot\phi\psi(v))\cdot\beta\alpha^2(y)]\cdot\beta\alpha^3(z)
\nonumber\\
&&+\beta^2\alpha^2(z)\cdot[\beta\alpha^2(y)\cdot(\phi^2(v)\cdot\alpha^3
\beta^{-1}(x))]+\beta^2\alpha^2(z)\cdot[(\phi\psi(v)\cdot\alpha^2(x))\cdot\alpha^3(y)]
\nonumber\\
&&+ [\beta^2\alpha(y)\cdot(\phi\psi(v)\cdot\alpha^2(x))]\cdot\beta\alpha^3(z)+
[(\psi^2(v)\cdot\beta\alpha(x))\cdot\beta\alpha^2(y)]\cdot\beta\alpha^3(z) \nonumber\\
&&-\beta^2\alpha(y)\beta\alpha^2(z)\cdot(\alpha^2\beta(x)\cdot\phi^3(v))
-[(\beta^2(x)\cdot\psi\phi(v))\cdot\beta\alpha^2(y)]\cdot\beta\alpha^3(z)
\nonumber\\
&&-\alpha^2\beta^2(x)\cdot[(\phi\psi(v)\cdot\alpha^2(z))\cdot\alpha^3(y)]
-[\alpha\beta^2(y)\cdot(\beta\alpha(z)\cdot\phi^2(v))]\cdot\beta\alpha^3(x)
\nonumber\\
&&-\beta^2\alpha^2(z)\cdot[\beta\alpha^2(y)
\cdot(\phi^2(v)\cdot\beta^{-1}\alpha^3(x))]
-(\phi\psi^2(v)\cdot\beta\alpha^2(x))\cdot\beta\alpha^2(y)\alpha^3(z)
\label{as1}
\end{eqnarray}
If we permute $x$ and $z$ in (\ref{as1}), we get
\begin{eqnarray}
&&as_{V_{\phi,\psi}}(\rho_r(\psi^2(v),\beta\alpha(z)),\beta\alpha^2(y),\alpha^3(x))\nonumber\\
&=&\beta^2\alpha^2(x)\cdot[\beta\alpha^2(y)\cdot(\alpha^2(z)\cdot
\phi^3\psi^{-1}(v))]+\beta^2\alpha^2(x)\cdot[(\alpha\beta(z)\cdot\phi^3(v))\cdot\alpha^3(y)]             \nonumber\\
&&[\beta^2\alpha(y)\cdot(\alpha\beta(z)\cdot\phi^3(v))]\cdot\beta\alpha^3(x)
+[(\beta^2(z)\cdot\phi\psi(v))\cdot\beta\alpha^2(y)]\cdot\beta\alpha^3(x)
\nonumber\\
&&+\beta^2\alpha^2(x)\cdot[\beta\alpha^2(y)\cdot(\phi^2(v)\cdot\alpha^3
\beta^{-1}(z))]+\beta^2\alpha^2(x)\cdot[(\phi\psi(v)\cdot\alpha^2(z))\cdot\alpha^3(y)]
\nonumber\\
&&+ [\beta^2\alpha(y)\cdot(\phi\psi(v)\cdot\alpha^2(z))]\cdot\beta\alpha^3(x)+
[(\psi^2(v)\cdot\beta\alpha(z))\cdot\beta\alpha^2(y)]\cdot\beta\alpha^3(x) \nonumber\\
&&-\beta^2\alpha(y)\beta\alpha^2(x)\cdot(\alpha^2\beta(z)\cdot\phi^3(v))
-[(\beta^2(z)\cdot\psi\phi(v))\cdot\beta\alpha^2(y)]\cdot\beta\alpha^3(x)
\nonumber\\
&&-\alpha^2\beta^2(z)\cdot[(\phi\psi(v)\cdot\alpha^2(x))\cdot\alpha^3(y)]
-[\alpha\beta^2(y)\cdot(\beta\alpha(x)\cdot\phi^2(v))]\cdot\beta\alpha^3(z)
\nonumber\\
&&-\beta^2\alpha^2(x)\cdot[\beta\alpha^2(y)
\cdot(\phi^2(v)\cdot\beta^{-1}\alpha^3(z))]
-(\phi\psi^2(v)\cdot\beta\alpha^2(z))\cdot\beta\alpha^2(y)\alpha^3(x)
\label{as2}\nonumber
\end{eqnarray}
Again, we have
\begin{eqnarray}
&&as_{V_{\phi,\psi}}(\beta^2(x)\beta\alpha(z),\beta\alpha^2(y),\phi^3(v))\nonumber\\
&=&\rho_l((\beta^2(x)\alpha\beta(z))\beta\alpha^2(y),\psi\phi^3(v))
-\rho_l(\alpha\beta^2(x)\beta\alpha^2(z),\rho_l(\beta\alpha^2(y),\phi^3(v))
\nonumber\\
&=&[(\beta^2(x)\alpha\beta(z))\beta\alpha^2(y)]\cdot\psi\phi^3(v)
+\psi^2\phi^2(v)\cdot[(\alpha\beta(x)\alpha^2(z))\alpha^3(y)]\nonumber\\
&&-\alpha\beta^2(x)\beta\alpha^2(z)\cdot(\beta\alpha^2(y)\cdot\phi^3(v))
-\underbrace{\alpha\beta^2(x)\beta\alpha^2(z)\cdot(
\psi\phi^2(v)\cdot\alpha^3(y))}_{C}
\nonumber\\
&&-\underbrace{(\beta^2\alpha(y)\cdot\psi\phi^2(v))\cdot\alpha^2\beta(x)
\alpha^3(z)}_{D}
-(\psi^2\phi(v)\cdot\beta\alpha^2(y))\cdot\alpha^2\beta(x)\alpha^3(z)
\nonumber\\
&&\mbox{ ( by a direct computation )}\nonumber
\end{eqnarray}
Observe that
\begin{eqnarray}
C &&=\alpha\beta^2(x)\beta\alpha^2(z)\cdot
\psi(\phi^2(v)\cdot\beta^{-1}\alpha^3(y))\nonumber\\
&&=\alpha^2\beta^2(x)\cdot[\beta\alpha^2(z)\cdot(\phi^2(v)\cdot
\beta^{-1}\alpha^3(y))]+\alpha^2\beta^2(z)\cdot[\beta\alpha^2(x)\cdot(\phi^2(v)\cdot
\beta^{-1}\alpha^3(y))] \mbox{  ( by (\ref{lc4} )}\nonumber\\
&&=\alpha^2\beta^2(x)\cdot[(\beta\alpha(z)\cdot\phi^2(v))\cdot\alpha^3(y)]
+\alpha^2\beta^2(z)\cdot[(\beta\alpha(x)\cdot\phi^2(v))\cdot\alpha^3(y)]
\mbox{  ( by (\ref{associat} )}\nonumber\\
D&&=\phi(\beta^2(y)\cdot\psi\phi(v))\cdot\alpha^2\beta(x)
\alpha^3(z)\nonumber\\
&&=[(\beta^2(y)\cdot\psi\phi(v))\cdot\beta\alpha^2(x)]\cdot\beta\alpha^3(z)
+[(\beta^2(y)\cdot\psi\phi(v))\cdot\beta\alpha^2(z)]\cdot\beta\alpha^3(x)
\mbox{  ( by (\ref{rc4} )}\nonumber\\
&&=[\alpha\beta^2(y)\cdot(\psi\phi(v)\cdot\alpha^2(x))]\cdot\beta\alpha^3(z)
+[\alpha\beta^2(y)\cdot(\psi\phi(v)\cdot\alpha^2(z))]\cdot\beta\alpha^3(x)
 \mbox{  ( by (\ref{associat} )}\nonumber
\end{eqnarray}
Hense, if we replace $C$ and $D$, we obtain
\begin{eqnarray}
&&as_{V_{\phi,\psi}}(\beta^2(x)\beta\alpha(z),\beta\alpha^2(y),\phi^3(v))\nonumber\\
&=&[(\beta^2(x)\alpha\beta(z))\beta\alpha^2(y)]\cdot\psi\phi^3(v)
+\psi^2\phi^2(v)\cdot[(\alpha\beta(x)\alpha^2(z))\alpha^3(y)]\nonumber\\
&&-\alpha\beta^2(x)\beta\alpha^2(z)\cdot(\beta\alpha^2(y)\cdot\phi^3(v))
-\alpha^2\beta^2(x)\cdot[(\beta\alpha(z)\cdot\phi^2(v))\cdot\alpha^3(y)]
\nonumber\\
&&-\alpha^2\beta^2(z)\cdot[(\beta\alpha(x)\cdot\phi^2(v))\cdot\alpha^3(y)]
-[\alpha\beta^2(y)\cdot(\psi\phi(v)\cdot\alpha^2(x))]\cdot\beta\alpha^3(z)
\nonumber\\
&&-[\alpha\beta^2(y)\cdot(\psi\phi(v)\cdot\alpha^2(z))]\cdot\beta\alpha^3(x)
-(\psi^2\phi(v)\cdot\beta\alpha^2(y))\cdot\alpha^2\beta(x)\alpha^3(z)
\nonumber
\end{eqnarray}
 Finally, using (\ref{brc3}), (\ref{blc3}) and (\ref{associat}), we obtain
\begin{eqnarray}
&&as_{V_{\phi,\psi}}(\psi^2(v)\cdot\beta\alpha(x),\beta\alpha^2(y),\alpha^3(z)) + as_{V_{\phi,\psi}}(\psi^2(v)\cdot\beta\alpha(z),\beta\alpha^2(y),\alpha^3(x)) \nonumber\\
&& + as_{V_{\phi,\psi}}(\mu(\beta^2(x),\beta\alpha(z)),\beta\alpha^2(y),\phi^3(v))=0\nonumber
\end{eqnarray}
\end{proof}
The following result will be used below. It gives a relation between BiHom-associative modules  and special BiHom-Jordan modules.
\begin{lem}\label{HomAssModule}
Let $(A,\mu,\alpha,\beta)$ be a BiHom-associative algebra and $(V,\phi,\psi)$ be a BiHom-module.
\begin{enumerate}
\item If $(V,\phi,\psi)$ is a right BiHom-associative $A$-module with the structure maps $\rho_r$
then $(V,\phi,\psi)$ is a right special BiHom-Jordan $A^+$-module with the same structure map $\rho_r.$
\item  If $(V,\phi,\psi)$ is a left BiHom-associative $A$-module with the structure maps $\rho_l$ such that $\psi$ is invertible
then, $(V,\phi,\psi)$ is a left special BiHom-Jordan $A^+$-module with the same structure map $\rho_l.$
\end{enumerate}
\end{lem}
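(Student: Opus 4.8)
The plan is to deduce both statements directly from Theorems~\ref{thmright} and~\ref{thmleft}, which give a clean sufficient criterion for a one-sided BiHom-module to be a special BiHom-Jordan module. Recall that $A^{+}=(A,\mu',\alpha,\beta)$ is the plus BiHom-Jordan algebra attached to the BiHom-associative algebra $A$ in Proposition~\ref{assoJord}, with $\mu'(a,b)=\mu(a,b)+\mu(\alpha^{-1}\beta(b),\beta^{-1}\alpha(a))$. Since $\alpha$ and $\beta$ commute, a short substitution gives, for all $x,y\in A$,
\[
\mu'(\beta(x),\alpha(y))=\mu(\beta(x),\alpha(y))+\mu(\beta(y),\alpha(x)).
\]
Observe also that $A^{+}$ carries the \emph{same} structure maps $\alpha,\beta$ as $A$, so any BiHom-module over $A$ is automatically a BiHom-module over $A^{+}$; in particular the morphism conditions \eqref{rc5} (resp. \eqref{lc5}) are inherited for free from the fact that $\rho_r$ (resp. $\rho_l$) is a morphism of BiHom-modules. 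Hence the only thing to verify in each case is \eqref{rc4} (resp. \eqref{lc4}) for the product $\mu'$.

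For part (1): starting from the left-hand side of \eqref{rc4} written over $A^{+}$, we use the displayed expansion of $\mu'(\beta(x),\alpha(y))$ and bilinearity of $\rho_r$ to get $\phi(v)\cdot\mu(\beta(x),\alpha(y))+\phi(v)\cdot\mu(\beta(y),\alpha(x))$. Applying the right BiHom-associativity \eqref{RightAssMod} of the $A$-module $V$, i.e. $\phi(v)\cdot\mu(a,b)=(v\cdot a)\cdot\beta(b)$, to each summand and using $\alpha\beta=\beta\alpha$, this becomes exactly $(v\cdot\beta(x))\cdot\beta\alpha(y)+(v\cdot\beta(y))\cdot\alpha\beta(x)$, which is \eqref{rc4}. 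Theorem~\ref{thmright} then shows $(V,\phi,\psi,\rho_r)$ is a right special BiHom-Jordan $A^{+}$-module.

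For part (2): since $\psi$ is invertible by hypothesis, it remains to check \eqref{lc4} over $A^{+}$. Expanding $\mu'(\beta(x),\alpha(y))\cdot\psi(v)=\mu(\beta(x),\alpha(y))\cdot\psi(v)+\mu(\beta(y),\alpha(x))\cdot\psi(v)$ and applying the left BiHom-associativity \eqref{LeftAssMod} of the $A$-module $V$, i.e. $\mu(a,b)\cdot\psi(v)=\alpha(a)\cdot(b\cdot v)$, to each summand (again invoking $\alpha\beta=\beta\alpha$) yields $\beta\alpha(x)\cdot(\alpha(y)\cdot v)+\beta\alpha(y)\cdot(\alpha(x)\cdot v)$, which is precisely \eqref{lc4}. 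Theorem~\ref{thmleft} then gives the conclusion. There is no genuine obstacle here: the computation is essentially forced once $\mu'(\beta(x),\alpha(y))$ is split into two $\mu$-products, and the one-sided BiHom-associativity of $V$ over $A$ collapses each piece. The only place requiring a little care is tracking the twists in the expansion of $\mu'(\beta(x),\alpha(y))$ and the occurrences of $\alpha\beta=\beta\alpha$, so that the output matches verbatim the forms of \eqref{rc4} and \eqref{lc4} demanded by Theorems~\ref{thmright} and~\ref{thmleft}.
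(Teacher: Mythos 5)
Your proposal is correct and follows essentially the same route as the paper: expand $\mu'(\beta(x),\alpha(y))=\mu(\beta(x),\alpha(y))+\mu(\beta(y),\alpha(x))$, collapse each summand with the one-sided BiHom-associativity conditions \eqref{RightAssMod} and \eqref{LeftAssMod}, and conclude via Theorems \ref{thmright} and \ref{thmleft}. Your write-up is in fact slightly more careful than the paper's (which contains a small typo in the right-module computation), but there is no substantive difference in method.
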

\begin{proof} It also suffices to prove (\ref{rc4}) and (\ref{lc4}). To start, let denote the product $\mu'$ in Proposition \ref{assoJord} by $\ast.$\end{proof}
\begin{enumerate}
\item If $(V,\phi,\psi)$ is a right BiHom-associative $A$-module with the structure map $\rho_r$ then for all $(x,y,v)\in A\times A\times V,$
$\phi(v)\cdot(\beta(x)\ast\beta(y))=\phi(v)\cdot(\beta(x)\alpha(y)+\
\beta(y)\alpha(x))=(v\cdot\beta(x))\cdot\beta\alpha(x)+(v\cdot\beta(y))\cdot
\beta\alpha(x)$
where the last equality holds by (\ref{11}). Then $(V,\phi,\psi)$ is a right special BiHom-Jordan  $A^+$-module by Theorem \ref{thmright}.
\item If $(V,\phi,\psi)$ is a left BiHom-associative $A$-module with the structure map $\rho_l$  then for all
$(x,y,v)\in A\times A\times V,$
$(\beta(x)\ast\beta(y))\dot\psi(v)=(\beta(x)\alpha(y)+\beta(y)\alpha(x))\cdot
\psi(v)=\alpha\beta(x)\cdot(\alpha(y)\cdot v)+\alpha\beta(y)\cdot(\alpha(x)\cdot v)$
 where the last equality holds by (\ref{00}). Then  $(V,\phi,\psi)$ is a left special BiHom-Jordan $A^+$-module by Theorem \ref{thmleft}.\hfill $\square$
\end{enumerate}
Now, we prove that a BiHom-associative module  gives rise to a BiHom-Jordan module for its plus BiHom-algebra.
\begin{prop}\label{bhamHJ}
Let $(A,\mu,\alpha_A)$ be a BiHom-associative algebra and $(V, \rho_1, \rho_2,\phi,\psi)$  be a BiHom-associative
$A$-bimodule such that $\phi$ and $\psi$ are inversible. Then $(V, \rho_1, \rho_2,\phi,\psi)$ is a BiHom-Jordan $A^+$-bimodule where $\rho_l$ and $\rho_r$ are defined as in (\ref{StrucPlus}).
\end{prop}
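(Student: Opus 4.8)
The plan is to deduce Proposition~\ref{bhamHJ} directly from three results already established: Lemma~\ref{HomAssModule}, the compatibility axiom built into the definition of a BiHom-associative bimodule, and Theorem~\ref{ModPlus}. First I would note that $A^+=(A,\mu',\alpha,\beta)$ is a BiHom-Jordan algebra by Proposition~\ref{assoJord} (here one uses, as throughout this circle of results, that $(A,\mu,\alpha,\beta)$ is regular, so that $\mu'$ and the plus-module structure maps are well defined); thus the phrase ``BiHom-Jordan $A^+$-bimodule'' makes sense, and it remains to check conditions (\ref{r3}), (\ref{r4}), (\ref{r5}) for the maps $\rho_l,\rho_r$ of (\ref{StrucPlus}).

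Next I would apply Lemma~\ref{HomAssModule}. Since $(V,\rho_1,\rho_2,\phi,\psi)$ is a BiHom-associative $A$-bimodule, in particular $(V,\phi,\psi,\rho_1)$ is a left BiHom-associative $A$-module and $(V,\phi,\psi,\rho_2)$ is a right BiHom-associative $A$-module. By part~(2) of Lemma~\ref{HomAssModule}, using that $\psi$ is invertible, $(V,\phi,\psi,\rho_1)$ is a left special BiHom-Jordan $A^+$-module; by part~(1), $(V,\phi,\psi,\rho_2)$ is a right special BiHom-Jordan $A^+$-module. Thus $V$ is simultaneously a left and a right special BiHom-Jordan $A^+$-module with structure maps $\rho_1$ and $\rho_2$ and unchanged twisting maps $\phi,\psi$.

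Then I would observe that the operator BiHom-commutativity hypothesis (\ref{associat}) of Theorem~\ref{ModPlus}, namely $\rho_2\circ(\rho_1\otimes\beta)=\rho_1\circ(\alpha\otimes\rho_2)$, is literally the BiHom-associativity condition (\ref{Assoc1}) that is part of the definition of a BiHom-associative $A$-bimodule; hence it holds automatically for $(V,\rho_1,\rho_2,\phi,\psi)$. Since moreover $\phi$ is invertible and $A$ is regular, all the hypotheses of Theorem~\ref{ModPlus} are met, with $\rho_1$ playing the role of the left special structure map and $\rho_2$ that of the right one. Applying Theorem~\ref{ModPlus} therefore shows that the maps $\rho_l=\rho_1+\rho_2(\psi\phi^{-1}\otimes\alpha\beta^{-1})\circ\tau_1$ and $\rho_r=\rho_1(\beta\alpha^{-1}\otimes\phi\psi^{-1})\circ\tau_2+\rho_2$ of (\ref{StrucPlus}) endow $(V,\phi,\psi)$ with the structure of a BiHom-Jordan $A^+$-bimodule, which is the assertion.

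The only genuine point requiring care — and the step I expect to be the main (though minor) obstacle — is the bookkeeping that matches the data: verifying that the left/right special BiHom-Jordan $A^+$-module structures produced by Lemma~\ref{HomAssModule} are exactly $\rho_1$ and $\rho_2$ with the same $\phi,\psi$ (so that substituting them into (\ref{StrucPlus}) reproduces the $\rho_l,\rho_r$ in the statement), and confirming that the standing commutativity and invertibility assumptions ($\alpha\beta=\beta\alpha$, $\phi$ and $\psi$ invertible, $\alpha,\beta$ bijective) are all in force so that Theorem~\ref{ModPlus} may be invoked verbatim. Once this identification is made, no further associator computation is needed, since Theorems~\ref{thmright}, \ref{thmleft} and \ref{ModPlus} have already absorbed all the necessary manipulations.
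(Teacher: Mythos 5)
Your proposal is correct and follows essentially the same route as the paper, whose proof consists precisely of invoking Lemma~\ref{HomAssModule} together with Theorem~\ref{ModPlus} (with the compatibility condition (\ref{associat}) supplied by the bimodule axiom (\ref{Assoc1}), as you note). Your additional bookkeeping about regularity and the invertibility of $\phi,\psi$ only makes explicit what the paper leaves implicit.
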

\begin{proof}The proof follows from Lemma \ref{HomAssModule}  and  Theorem \ref{ModPlus}.\end{proof}
\begin{prop}\label{HJB-HJB}
Let $(A,\mu,\alpha, \beta)$ be a BiHom-Jordan algebra and  $V_{\phi,\psi}=(V,\phi, \psi)$  be a BiHom-Jordan $A$-bimodule with the structure maps $\rho_l$ and $\rho_r$.  Then for each $n\in\mathbb{N},$ the maps
\begin{eqnarray}
\rho_l^{(n)}=\rho_l\circ(\alpha^n\otimes Id_V)\label{nmj1}\\
\rho_r^{(n)}=\rho_r\circ(Id_V\otimes \beta^n)\label{nmj2}
\end{eqnarray}
give the BiHom-module $(V,\phi, \psi)$ the structure of a BiHom-Jordan $A$-bimodule that we denote
by $V_{\phi,\psi}^{(n)}$
\end{prop}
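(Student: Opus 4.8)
The plan is to verify that the twisted maps $\rho_l^{(n)}$ and $\rho_r^{(n)}$ of \eqref{nmj1}--\eqref{nmj2} again make $(V,\phi,\psi)$ into a BiHom-Jordan $A$-bimodule, i.e. to check the three defining conditions \eqref{r3}, \eqref{r4} and \eqref{r5} for them. Throughout one uses only the commutativity $\alpha\beta=\beta\alpha$, the multiplicativity of $\mu$ and the compatibility of $\phi,\psi$ with the structure maps, together with the three conditions \eqref{r3}, \eqref{r4}, \eqref{r5} already known for $V_{\phi,\psi}$.

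First I would check that $\rho_l^{(n)}$ and $\rho_r^{(n)}$ are still morphisms of BiHom-modules. Since $\rho_l$ is such a morphism and $\alpha^n$ commutes with $\alpha$ and $\beta$, one has $\phi\circ\rho_l^{(n)}=\rho_l\circ(\alpha\otimes\phi)\circ(\alpha^n\otimes Id_V)=\rho_l\circ(\alpha^n\otimes Id_V)\circ(\alpha\otimes\phi)=\rho_l^{(n)}\circ(\alpha\otimes\phi)$, and similarly $\psi\circ\rho_l^{(n)}=\rho_l^{(n)}\circ(\beta\otimes\psi)$, $\phi\circ\rho_r^{(n)}=\rho_r^{(n)}\circ(\phi\otimes\alpha)$ and $\psi\circ\rho_r^{(n)}=\rho_r^{(n)}\circ(\psi\otimes\beta)$. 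This is the exact analogue of the opening step of the proof of Proposition \ref{sma}.

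Next, for \eqref{r3} one computes $\rho_l^{(n)}$ on $\beta(x)\otimes\phi(v)$, commutes the iterate of $\alpha$ past $\beta$, applies \eqref{r3} for $V_{\phi,\psi}$, and reads the outcome back using the morphism relations for $\rho_r^{(n)}$, obtaining $\rho_l^{(n)}(\beta\otimes\phi)=\rho_r^{(n)}(\psi\otimes\alpha)\tau_1$. For \eqref{r4} and \eqref{r5} the strategy mirrors that of Proposition \ref{sma}: expand the module BiHom-associator $as_{V^{(n)}_{\phi,\psi}}$ of the twisted bimodule through its defining formulas in terms of $\rho_l^{(n)}$ and $\rho_r^{(n)}$, substitute $\rho_l^{(n)}=\rho_l\circ(\alpha^n\otimes Id_V)$ and $\rho_r^{(n)}=\rho_r\circ(Id_V\otimes\beta^n)$, and then push every auxiliary power of $\alpha$ and $\beta$ onto the innermost arguments using multiplicativity of $\mu$, the morphism property of $\rho_l$ and $\rho_r$, and commutativity of $\alpha$ and $\beta$. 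Each summand in \eqref{r4}, and each of the three associator terms of \eqref{r5}, then becomes the corresponding term of $as_{V_{\phi,\psi}}$ evaluated at the original arguments with some iterates of $\alpha,\beta$ inserted; since \eqref{r4} and \eqref{r5} hold in $V_{\phi,\psi}$ for arbitrary elements of $A$, they apply in particular to these twisted arguments, so the twisted cyclic sums vanish as required.

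The main obstacle is purely bookkeeping: identities \eqref{r4} and \eqref{r5} involve several nested products with powers of $\alpha$ and $\beta$ up to the third, so one must keep careful track of where the extra iterates $\alpha^n$ (resp. $\beta^n$) land after being commuted through $\mu$, $\rho_l$ and $\rho_r$; for \eqref{r5} I would treat the three associator terms of types $V\otimes A\otimes A$, $A\otimes V\otimes A$ and $A\otimes A\otimes V$ separately and normalise each before summing. Once the terms are aligned, the reduction to \eqref{r4} and \eqref{r5} in $V_{\phi,\psi}$ is term-by-term and routine, which completes the proof and shows that $V^{(n)}_{\phi,\psi}=(V,\phi,\psi,\rho_l^{(n)},\rho_r^{(n)})$ is a BiHom-Jordan $A$-bimodule.
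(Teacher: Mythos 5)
Your proposal follows essentially the same route as the paper's proof: first check that $\rho_l^{(n)}$ and $\rho_r^{(n)}$ are morphisms of BiHom-modules and that (\ref{r3}) carries over, then expand the twisted module BiHom-associators, push the powers $\alpha^{n}$ and $\beta^{n}$ onto the arguments via multiplicativity and the morphism properties, and invoke (\ref{r4}) and (\ref{r5}) for $V_{\phi,\psi}$ at the twisted arguments. The only non-routine point—that the reduced terms, carrying $\alpha^{n}$ from $\rho_l^{(n)}$ but $\beta^{n}$ from $\rho_r^{(n)}$, really assemble into a single instance of (\ref{r4}) resp.\ (\ref{r5}) at one fixed twisted triple—is treated at the same (terse) level in the paper itself, so your plan matches it; spelling that alignment out explicitly would be the one worthwhile addition.
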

\begin{proof} Since the structure map $\rho_l$ is a morphism of BiHom-modules, we get:
$$\begin{array}{lllll}
\phi\rho_l^{(n)}&=&\phi\rho_l\circ(\alpha^n\otimes Id_V)
\mbox{ \textsl{( by (\ref{nmj1}) )}}\nonumber\\
&=&\rho_l\circ(\alpha^{n+1}\otimes\phi)\\ 
&=&\rho_l\circ(\alpha^n\otimes Id_V)\circ(\alpha\otimes\phi)\nonumber\\
&=&\rho_l^{(n)}\circ(\alpha\otimes\phi)\nonumber
\end{array}$$
 Similarly, we get that $\psi\rho_l^{(n)}=\rho_l^{(n)}\circ(\beta\otimes\psi),\
 \phi\rho_r^{(n)}=\rho_r^{(n)}\circ(\phi\otimes\alpha)$ and $\psi\rho_r^{(n)}=\rho_r^{(n)}\circ(\psi\otimes\beta)$  and  that (\ref{r3}) holds for $V_{\phi,\psi}^{(n)}.$ Thus $\rho_l^{(n)}$ and $\rho_r^{(n)}$ are morphisms of BiHom-modules.

Next, the condition (\ref{r4}) in $V_{\phi,\psi}^{(n)}$ follows from the one, in $V_{\phi,\psi}.$ Now,  we compute
$$\begin{array}{llllllll}
&&\circlearrowleft_{(x,y,z)}as_{V^{(n)}_{\phi,\psi}}(\beta^2(x)\alpha\beta(y),\phi^2\psi(v),\alpha^3(z))\nonumber\\
&=&\circlearrowleft_{(x,y,z)}\Big\{\rho_r^{(n)}(\rho_l^{(n)}(\beta^2(x)\alpha\beta(y),\phi^2\psi(v)),\beta\alpha^3(z))\nonumber\\
&&-\rho_l^{(n)}(\alpha\beta^2(x)\alpha^2\beta(y),\rho_r^{(n)}(\phi^2\psi(v),\alpha^3(z)))\Big\}\nonumber\\&=&\circlearrowleft_{(x,y,z)}\Big\{\rho_r^{(n)}(\rho_l(\alpha^n\beta^2(x)\alpha^{n+1}\beta(y),\phi^2\psi(v)),\beta\alpha^3(z))\nonumber\\&&-
\rho_l^{(n)}(\alpha\beta^2(x)\alpha^2\beta(y),\rho_r(\phi^2\psi(v),\alpha^3\beta^n(z)))\Big\}\nonumber\\
&=&\circlearrowleft_{(x,y,z)}\Big\{\rho_r(\rho_l(\alpha^n\beta^2(x)\alpha^{n+1}\beta(y),\phi^2\psi(v)),\beta\alpha^3\beta^n(z))\nonumber\\&&-
\rho_l(\alpha\beta^2\alpha^n(x)\alpha^2\beta\alpha^n(y),\rho_r(\phi^2\psi(v),\alpha^3\beta^n(z)))\Big\}\nonumber\\
&=&\circlearrowleft_{(x,y,z)}\Big\{\rho_r(\rho_l(\beta^2\alpha^n(x)
\alpha\beta\alpha^{n}(y),\phi^2\psi(v)),\beta(\alpha^3\beta^n(z)))\nonumber\\&&-
\rho_l(\alpha(\beta^2\alpha^n(x)\alpha\beta\alpha^n(y)),\rho_r(\phi^2\psi(v),\alpha^3\beta^n(z)))\Big\}\nonumber\\
&=&\circlearrowleft_{(x,y,z)}as_{V_{\phi,\psi}}(\beta^2(\alpha^n(x))\alpha\beta(\alpha^n(y)),\phi^2\psi(v),\alpha^3(\beta^n(z)))\nonumber\\
&=&\circlearrowleft_{(\alpha^n(x),\alpha^n(y),\beta^n(z))}as_{V_{\phi,\psi}}(\beta^2(\alpha^n(x))\alpha\beta(\alpha^n(y)),\phi^2\psi(v),\alpha^3(\beta^n(z)))\nonumber\\
&=&0 \mbox{ ( \textsl{by (\ref{r4}}) in $V_{\phi,\psi}$ )}\nonumber
\end{array}$$
Then we get (\ref{r4}) for $V_{\phi,\psi}^{(n)}.$ Finally remarking that
\begin{eqnarray}
&&as_{V^{(n)}_{\phi,\psi}}(\rho_r^{(n)}(\psi^2(v),\alpha\beta(x)),\beta\alpha^2(y),\alpha^3(z))\nonumber\\
&=&as_{V^{(n)}_{\phi,\psi}}(\psi^2(v)\cdot\alpha\beta^{n+1}(x),\beta\alpha^2(y),\alpha^3(z))\nonumber\\
&=&\rho_r^{(n)}(\rho_r^{(n)}(\psi^2(v)\cdot\alpha\beta^{n+1}(x),\beta\alpha^2(y)),\beta\alpha^3(z))\nonumber\\&&-
\rho_r^{(n)}(\phi\psi^2(v)\cdot\alpha^2\beta^{n+1}(x),\mu(\beta\alpha^2(y),\alpha^3(z)))\nonumber\\
&=&\rho_r(\rho_r(\psi^2(v)\cdot\alpha\beta^{n+1}(x),\beta\alpha^2\beta^n(y)),\beta\alpha^3\beta^n(z))\nonumber\\&&-
\rho_r(\phi\psi^2(v)\cdot\alpha^2\beta^{n+1}(x),\mu(\beta\alpha^2\beta^n(y),\alpha^3\beta^n(z)))\nonumber\\
&=&\rho_r(\rho_r(\psi^2(v)\cdot\beta\alpha(\beta^n(x)),\beta\alpha^2(\beta^n(y))),\beta(\alpha^3\beta^n(z)))\nonumber\\&&-
\rho_r(\phi(\psi^2(v)\cdot\beta\alpha\beta^n(x),\mu(\beta\alpha^2\beta^n(y),\alpha^3\beta^n(z)))\nonumber\\
&=&as_{V_{\phi,\psi}}(\psi^2(v)\cdot\beta\alpha(\beta^n(x)),\beta\alpha^2(\beta^n(y)),\alpha^3(\beta^n(z)))\nonumber
\end{eqnarray}
and similarly
\begin{eqnarray}
as_{V^{(n)}_{\phi,\psi}}(\rho_r^{(n)}(\psi^2(v),\alpha\beta(z)),\beta\alpha^2(y),\alpha^3(x))
&=&as_{V_{\phi,\psi}}(\psi^2(v)\cdot\beta\alpha(\beta^n(z)),\beta\alpha^2(\beta^n(y)),\alpha^3(\beta^n(x))),\nonumber\\
as_{V^{(n)}_{\phi,\psi}}(\beta^2(x)\beta\alpha(z),\beta\alpha^2(y),\phi^3(v))
&=&as_{V_{\phi,\psi}}(\beta^2(\beta^n(x))\beta\alpha(\beta^n(z)),\beta\alpha^2(\beta^n(y)),\phi^3(v))~.\nonumber
\end{eqnarray}
Then the relation (\ref{r5}) in $V_{\phi,\psi}^{(n)}$  follows from the one in $V_{\phi,\psi}.$
We conclude that $V_{\phi,\psi}^{(n)}$ is a BiHom-Jordan $A$-bimodule.\end{proof}
The following result  says that Jordan bimodules can  be deformed into BiHom-Jordan bimodules via an endomorphism.
\begin{thm}\label{HJB-JB}
Let $(A,\mu)$ be a Jordan algebra, $V$ be a Jordan $A$-bimodule with the structure maps
$\rho_l$ and $\rho_r$ and $\alpha,\beta$ be  endomorphisms of the Jordan algebra $A$
and $\phi, \psi$ be  linear self-maps of $V$ such that $\phi\circ\rho_l=\rho_l\circ(\alpha\otimes\phi),$
$\phi\circ\rho_r=\rho_r\circ(\phi\otimes\alpha),$
$\psi\circ\rho_l=\rho_l\circ(\beta\otimes\psi)$ and
$\psi\circ\rho_r=\rho_r\circ(\psi\otimes\beta)$.\\
Write $A_{\alpha,\beta}$ for the BiHom-Jordan algebra $(A,\mu_{\alpha,\beta}=\mu(\alpha\otimes\beta),\alpha, \beta)$ and
$V_{\phi,\psi}$ for the BiHom-module $(V,\phi,\psi).$ Then the maps:
\begin{eqnarray}
\tilde{\rho_l}=\rho_l(\alpha\otimes\psi) \mbox{ and }
\tilde{\rho_r}=\rho_r(\phi\otimes\beta)
\end{eqnarray}
give the BiHom-module $V_{\phi,\psi}$ the structure of a BiHom-Jordan $A_{\alpha,\beta}$-bimodule.
\end{thm}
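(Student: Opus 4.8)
The plan is to run the same Yau-type twisting argument that underlies Theorem \ref{mam}, but starting from an \emph{honest} (untwisted) Jordan bimodule. Concretely, I will express every module BiHom-associator of the twisted data $(V_{\phi,\psi},\tilde\rho_l,\tilde\rho_r)$ over $A_{\alpha,\beta}$ in terms of the ordinary module associator $as_V$ of the Jordan $A$-bimodule $(V,\rho_l,\rho_r)$, evaluated at suitably twisted arguments, and then invoke the classical Jordan-bimodule identities, which by Remark \ref{rem1}(1) are exactly the relations (\ref{r3})--(\ref{r5}) specialised to $\alpha=\beta=Id_A$, $\phi=\psi=Id_V$. Throughout I use that $\phi\psi=\psi\phi$ (since $(V,\phi,\psi)$ is a BiHom-module) and $\alpha\beta=\beta\alpha$ (part of what makes $A_{\alpha,\beta}$ a BiHom-algebra), together with the four intertwining hypotheses relating $\phi,\psi$ to $\alpha,\beta$ and the fact that $\alpha,\beta$ are algebra endomorphisms of $A$.

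\emph{Step 1: the twisted structure maps are BiHom-module morphisms, and (\ref{r3}) holds.} Expanding $\tilde\rho_l=\rho_l\circ(\alpha\otimes\psi)$, $\tilde\rho_r=\rho_r\circ(\phi\otimes\beta)$ and pushing $\phi,\psi$ through via $\phi\rho_l=\rho_l(\alpha\otimes\phi)$, $\psi\rho_l=\rho_l(\beta\otimes\psi)$, $\phi\rho_r=\rho_r(\phi\otimes\alpha)$, $\psi\rho_r=\rho_r(\psi\otimes\beta)$, one gets $\phi\tilde\rho_l=\tilde\rho_l(\alpha\otimes\phi)$, $\psi\tilde\rho_l=\tilde\rho_l(\beta\otimes\psi)$, $\phi\tilde\rho_r=\tilde\rho_r(\phi\otimes\alpha)$, $\psi\tilde\rho_r=\tilde\rho_r(\psi\otimes\beta)$. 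Then (\ref{r3}) for the twisted data is immediate: $\tilde\rho_l(\beta(x),\phi(v))=\rho_l(\alpha\beta(x),\psi\phi(v))$ and $\tilde\rho_r(\psi(v),\alpha(x))=\rho_r(\phi\psi(v),\beta\alpha(x))$ agree because $\rho_l(a,w)=\rho_r(w,a)$ in the ordinary Jordan bimodule $V$ (the $Id$-case of (\ref{r3})) and because $\alpha,\beta$ and $\phi,\psi$ commute.

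\emph{Step 2: the reduction formula.} Still writing $as_{V_{\phi,\psi}}$ for the module BiHom-associator attached to $(\mu_{\alpha,\beta},\tilde\rho_l,\tilde\rho_r,\alpha,\beta,\phi,\psi)$, I substitute $\mu_{\alpha,\beta}=\mu(\alpha\otimes\beta)$ and the twisted actions into the defining formulas on the three summands $A\otimes A\otimes V$, $A\otimes V\otimes A$, $V\otimes A\otimes A$, and commute $\alpha,\beta,\phi,\psi$ past $\mu,\rho_l,\rho_r$ as in Step 1. This yields
\[
as_{V_{\phi,\psi}}(a,b,v)=as_V(\alpha^2(a),\alpha\beta(b),\psi^2(v)),\qquad
as_{V_{\phi,\psi}}(a,v,b)=as_V(\alpha^2(a),\phi\psi(v),\beta^2(b)),
\]
\[
as_{V_{\phi,\psi}}(v,a,b)=as_V(\phi^2(v),\alpha\beta(a),\beta^2(b)),
\]
for all $a,b\in A$, $v\in V$. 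This is the only genuine computation in the proof, and the delicate point is the exponent bookkeeping: one must check that the powers of $\alpha,\beta$ landing on the $A$-slots and of $\phi,\psi$ on the $V$-slot come out exactly as displayed, so that in Step 3 the three entries of each resulting $as_V$ carry matching twists.

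\emph{Step 3: verifying (\ref{r4}) and (\ref{r5}).} Feeding Step 2 into the left-hand side of (\ref{r4}) for the twisted data and using $\mu_{\alpha,\beta}(\beta^2(x),\alpha\beta(y))=\mu(\alpha\beta^2(x),\alpha\beta^2(y))$, each summand becomes $as_V(\mu(\alpha^3\beta^2(x),\alpha^3\beta^2(y)),\phi^3\psi^2(v),\alpha^3\beta^2(z))$; putting $x'=\alpha^3\beta^2(x)$, $y'=\alpha^3\beta^2(y)$, $z'=\alpha^3\beta^2(z)$, $v'=\phi^3\psi^2(v)$ turns the cyclic sum into $\circlearrowleft_{(x',y',z')}as_V(\mu(x',y'),v',z')$, which vanishes because $V$ is a classical Jordan $A$-bimodule ($Id$-case of (\ref{r4})). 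Likewise, after the substitutions $\tilde\rho_r(\psi^2(v),\beta\alpha(x))=\rho_r(\phi\psi^2(v),\beta^2\alpha(x))$ and $\mu_{\alpha,\beta}(\beta^2(x),\beta\alpha(z))=\mu(\alpha\beta^2(x),\alpha\beta^2(z))$, the three terms of (\ref{r5}) reduce to $as_V(\rho_r(v',x'),y',z')+as_V(\rho_r(v',z'),y',x')+as_V(\mu(x',z'),y',v')$ with the same primed quantities, and this is $0$ by the $Id$-case of (\ref{r5}). Hence $(V_{\phi,\psi},\tilde\rho_l,\tilde\rho_r)$ satisfies (\ref{r3})--(\ref{r5}) over $A_{\alpha,\beta}$, so it is a BiHom-Jordan $A_{\alpha,\beta}$-bimodule. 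The main obstacle is entirely in Steps 2--3: establishing the reduction formula with the correct exponents and checking that every summand of (\ref{r4}) and (\ref{r5}) is transported to $as_V$ at the \emph{same} twisted point up to the cyclic permutation of $(x,y,z)$; once that is in place, the conclusion is immediate from the classical Jordan-bimodule axioms.
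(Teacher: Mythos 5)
Your proof is correct and follows essentially the same route as the paper: both push the twisting maps through $\rho_l,\rho_r,\mu$ via the intertwining hypotheses so that each module BiHom-associator of the twisted data becomes the ordinary associator $as_V$ evaluated at $\alpha^3\beta^2$-, $\phi^3\psi^2$-twisted arguments, and then conclude from the classical Jordan-bimodule identities (the $Id$-case of (\ref{r3})--(\ref{r5})). Your only organizational difference is isolating the three reduction formulas of Step 2 as a preliminary lemma before substituting, whereas the paper carries out the same expansions term by term inside (\ref{r4}) and (\ref{r5}); the exponent bookkeeping you flag checks out in both.
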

\begin{proof}It is easy to prove that the relation (\ref{r3}) for $V_{\phi,\psi}$ holds and both maps $\tilde{\rho_l},$ $\tilde{\rho_r}$ are morphisms. Next, we have
\begin{eqnarray}
&& \circlearrowleft_{(x,y,z)}as_{V_{\phi,\psi}}(\mu_{\alpha,\beta}(\beta^2(x),\alpha\beta(y)),
\phi^2\psi(v),\alpha^3(z))=\circlearrowleft_{(x,y,z)}as_{V_{\phi,\psi}}(\alpha\beta^2(x)\alpha\beta^2(y)),
\phi^2\psi(v),\alpha^3(z))\nonumber\\
&&=\circlearrowleft_{(x,y,z)}\{\tilde{\rho_r}(\tilde{\rho_l}(\alpha\beta^2(x)\alpha\beta^2(y),\phi^2\psi(v)),\beta\alpha^3(z))-
\tilde{\rho_l}(\alpha^2\beta^2(x)\alpha^2\beta^2(y),\tilde{\rho_r}(\phi^2\psi(v),\alpha^3(z)))  \}\nonumber\\
&&=\circlearrowleft_{(x,y,z)}\{\tilde{\rho_r}(\rho_l(\alpha^2\beta^2(x)\alpha^2\beta^2(y),\phi^2\psi^2(v)),\beta\alpha^3(z))-
\tilde{\rho_l}(\alpha^2\beta^2(x)\alpha^2\beta^2(y),\rho_r(\phi^3\psi(v),\beta\alpha^3(z)))  \}\nonumber\\
&&=\circlearrowleft_{(x,y,z)}\{\rho_r(\rho_l(\alpha^3\beta^2(x)\alpha^3\beta^2(y),\phi^3\psi^2(v)),\beta^2\alpha^3(z))-
\tilde{\rho_l}(\alpha^3\beta^2(x)\alpha^3\beta^2(y),\rho_r(\phi^3\psi^2(v),\beta^2\alpha^3(z)))  \}\nonumber\\
&&=\circlearrowleft_{(x,y,z)} as_V(\alpha^3\beta^2(x)\alpha^3\beta^2(y),\phi^3\psi^2(v),\alpha^3\beta^2(z))\nonumber\\
&&=\circlearrowleft_{(\alpha^3\beta^2(x),\alpha^3\beta^2(y),\alpha^3\beta^2(z))} as_V(\alpha^3\beta^2(x)\alpha^3\beta^2(y),\phi^3\psi^2(v),\alpha^3\beta^2(z))
=0 \mbox { ( \textsl{by  (\ref{r4}}) in $V.$}\nonumber
\end{eqnarray}
Therefore, we get (\ref{r4}) for $V_{\phi,\psi}$. Finally, we have
\begin{eqnarray}
&& as_{V_{\phi,\psi}}(\tilde{\rho_r}(\psi^2(v),
\beta\alpha(x)),\beta\alpha^2(y),\alpha^3(z))\nonumber\\
&&= as_{V_{\phi,\psi}}(\phi\psi^2(v)\cdot\beta^2\alpha(x),\beta\alpha^2(y),\alpha^3(z))\nonumber\\
&&=\tilde{\rho_r}(\tilde{\rho_r}(\phi\psi^2(v)\cdot\beta^2\alpha(x),\beta\alpha^2(y)),\beta\alpha^3(z))-\tilde{\rho_r}(\phi^2\psi^2(v)\cdot\beta^2\alpha^2(x),\mu_{\alpha,\beta}(\beta\alpha^2(y),\alpha^3(z)))\nonumber\\
&&=\tilde{\rho_r}(\rho_r(\phi^2\psi^2(v)\cdot\beta^2\alpha^2(x),\beta^2\alpha^2(y)),\beta\alpha^3(z))-\tilde{\rho_r}(\phi^2\psi^2(v)\cdot\beta^2\alpha^2(x),\beta\alpha^3(y)\beta\alpha^3(z))\nonumber\\
&&=\rho_r(\rho_r(\phi^3\psi^2(v)\cdot\beta^2\alpha^3(x),\beta^2\alpha^3(y)),\beta^2\alpha^3(z))-\rho_r(\phi^3\psi^2(v)\cdot\beta^2\alpha^3(x),\beta^2\alpha^3(y)\beta^2\alpha^3(z))\nonumber\\
&&= as_V(\phi^3\psi^2(v)\cdot\beta^2\alpha^3(x),\beta^2\alpha^3(y)),\beta^2\alpha^3(z))\nonumber
\end{eqnarray}
and similarly
\begin{eqnarray}
as_{V_{\phi,\psi}}(\tilde{\rho_r}(\psi^2(v),
\beta\alpha(z)),\beta\alpha^2(y),\alpha^3(x))=
as_V(\phi^3\psi^2(v)\cdot\beta^2\alpha^3(z),\beta^2\alpha^3(y)),\beta^2\alpha^3(x))\nonumber\\
as_{V_{\phi,\psi}}(\mu_{\alpha,\beta}(\beta^2(x),\beta\alpha(z)),\beta\alpha^2(y),\phi^3(v))=
as_V(\mu(\beta^2\alpha^3(x),\beta^2\alpha^3(z)),\beta^2\alpha^3(y),\phi^3\psi^2(v)).\nonumber
\end{eqnarray}
Hence, we obtain (\ref{r5}) in $V_{\phi,\psi}$ as follows
\begin{eqnarray}
&&as_{V_{\phi,\psi}}(\tilde{\rho_r}(\psi^2(v),
\beta\alpha(x)),\beta\alpha^2(y),\alpha^3(z))+as_{V_{\phi,\psi}}(\tilde{\rho_r}(\psi^2(v),
\beta\alpha(z)),\beta\alpha^2(y),\alpha^3(x))\nonumber\\
&&+as_{V_{\phi,\psi}}(\mu_{\alpha,\beta}(\beta^2(x),\beta\alpha(z)),\beta\alpha^2(y),\phi^3(v))=as_V(\phi^3\psi^2(v)\cdot\beta^2\alpha^3(x),\beta^2\alpha^3(y)),\beta^2\alpha^3(z))\nonumber\\
&&+as_V(\phi^3\psi^2(v)\cdot\beta^2\alpha^3(z),\beta^2\alpha^3(y)),\beta^2\alpha^3(x))
+as_V(\mu(\beta^2\alpha^3(x),\beta^2\alpha^3(z)),\beta^2\alpha^3(y),\phi^3\psi^2(v))\nonumber\\
&&=0 \mbox{ (\textsl{ by (\ref{r5}}) in $V).$}\nonumber
\end{eqnarray}
Therefore the BiHom-module $V_{\phi,\psi}$ has a BiHom-Jordan $A_{\alpha,\beta}$-bimodule structure.
\end{proof}
\begin{cor}
Let $(A,\mu)$ be a Jordan algebra, $V$ be a Jordan $A$-bimodule with the structure maps
$\rho_l$ and $\rho_r$, $\alpha,\ \beta$ be  endomorphisms of the Jordan algebra $A$
and $\phi, \psi$ be  linear self-maps of $V$ such that $\phi\circ\rho_l=\rho_l\circ(\alpha\otimes\phi),$
$\phi\circ\rho_r=\rho_r\circ(\phi\otimes\alpha),$
$\psi\circ\rho_l=\rho_l\circ(\beta\otimes\psi)$ and
$\psi\circ\rho_r=\rho_r\circ(\psi\otimes\beta).$\\
Write $A_{\alpha,\beta}$ for the BiHom-Jordan algebra $(A,\mu_{\alpha,\beta}=\mu(\alpha\otimes\beta),\alpha, \beta)$ and
$V_{\phi,\psi}$ for the BiHom-module $(V,\phi,\psi).$ Then the maps:
\begin{eqnarray}
\tilde{\rho}_l^{(n)}=\rho_l\circ(\alpha^{n+1}\otimes\psi) \mbox{ and }
\tilde{\rho}_r^{(n)}=\rho_r\circ(\phi\otimes\beta^{n+1})
\end{eqnarray}
give the BiHom-module $V_{\phi,\psi}$ the structure of a BiHom-Jordan $A_{\alpha,\beta}$-bimodule for
each $n\in\mathbb{N}$.
\end{cor}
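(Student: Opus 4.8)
The plan is to derive the corollary directly from Theorem~\ref{HJB-JB} and Proposition~\ref{HJB-HJB}, which together supply exactly the two deformation mechanisms needed. First I would apply Theorem~\ref{HJB-JB} to the Jordan algebra $(A,\mu)$, the Jordan $A$-bimodule $V$ with structure maps $\rho_l,\rho_r$, the endomorphisms $\alpha,\beta$ of $A$, and the linear self-maps $\phi,\psi$ of $V$: the four intertwining hypotheses $\phi\circ\rho_l=\rho_l\circ(\alpha\otimes\phi)$, $\phi\circ\rho_r=\rho_r\circ(\phi\otimes\alpha)$, $\psi\circ\rho_l=\rho_l\circ(\beta\otimes\psi)$, $\psi\circ\rho_r=\rho_r\circ(\psi\otimes\beta)$ assumed in the corollary are precisely those in the hypothesis of that theorem. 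Hence the maps $\tilde{\rho_l}=\rho_l\circ(\alpha\otimes\psi)$ and $\tilde{\rho_r}=\rho_r\circ(\phi\otimes\beta)$ give $V_{\phi,\psi}=(V,\phi,\psi)$ the structure of a BiHom-Jordan $A_{\alpha,\beta}$-bimodule, where $A_{\alpha,\beta}=(A,\mu_{\alpha,\beta}=\mu(\alpha\otimes\beta),\alpha,\beta)$ is the associated BiHom-Jordan algebra.

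Next I would feed this output into Proposition~\ref{HJB-HJB}, applied to the BiHom-Jordan algebra $A_{\alpha,\beta}$ (whose two structure maps are $\alpha$ and $\beta$) and the BiHom-Jordan $A_{\alpha,\beta}$-bimodule $(V,\phi,\psi,\tilde{\rho_l},\tilde{\rho_r})$, with the parameter $n$. Proposition~\ref{HJB-HJB} then asserts that the twisted maps $\tilde{\rho_l}\circ(\alpha^n\otimes Id_V)$ and $\tilde{\rho_r}\circ(Id_V\otimes\beta^n)$ again give $V_{\phi,\psi}$ a BiHom-Jordan $A_{\alpha,\beta}$-bimodule structure. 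Substituting the expressions for $\tilde{\rho_l}$ and $\tilde{\rho_r}$ and composing yields
\[
\tilde{\rho_l}\circ(\alpha^n\otimes Id_V)=\rho_l\circ(\alpha\otimes\psi)\circ(\alpha^n\otimes Id_V)=\rho_l\circ(\alpha^{n+1}\otimes\psi),
\]
\[
\tilde{\rho_r}\circ(Id_V\otimes\beta^n)=\rho_r\circ(\phi\otimes\beta)\circ(Id_V\otimes\beta^n)=\rho_r\circ(\phi\otimes\beta^{n+1}),
\]
which are exactly the maps $\tilde{\rho}_l^{(n)}$ and $\tilde{\rho}_r^{(n)}$ appearing in the statement.

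Since both cited results apply without any further hypotheses, this argument is a routine bookkeeping composition and presents no real obstacle; the only points to check are formal, namely that the intertwining relations between $\phi,\psi$ and $\rho_l,\rho_r$ assumed here are precisely those needed by Theorem~\ref{HJB-JB} (so that $A_{\alpha,\beta}$ is genuinely a BiHom-Jordan algebra, which already presupposes $\alpha\beta=\beta\alpha$), and that the two twists are composed in the correct order so that the extra power appears as $\alpha^{n+1}$ on the algebra slot of the left action and as $\beta^{n+1}$ on the algebra slot of the right action. This establishes the corollary for every $n\in\mathbb{N}$.
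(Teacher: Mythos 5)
Your proposal is correct and follows essentially the same route as the paper, whose proof is precisely the combination of Theorem~\ref{HJB-JB} and Proposition~\ref{HJB-HJB}; you simply spell out the composition $\rho_l\circ(\alpha\otimes\psi)\circ(\alpha^n\otimes Id_V)=\rho_l\circ(\alpha^{n+1}\otimes\psi)$ and $\rho_r\circ(\phi\otimes\beta)\circ(Id_V\otimes\beta^n)=\rho_r\circ(\phi\otimes\beta^{n+1})$ explicitly, which the paper leaves implicit.
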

\begin{proof}The proof follows from Proposition \ref{HJB-HJB} and Theorem \ref{HJB-JB}.
\end{proof}
Similarly to BiHom-alternative algebras, the split null extension, determined by the given bimodule over a BiHom-Jordan algebra, is constructed as follows:
\begin{thm}
Let $(A,\mu,\alpha, \beta)$ be a BiHom-Jordan algebra and  $(V,\phi, \psi)$  be a BiHom-Jordan $A$-bimodule with the structure maps $\rho_l$ and $\rho_r$. Then $(A\oplus V,\tilde{\mu},\tilde{\alpha}, \tilde{\beta})$ is
a BiHom-Jordan algebra where \\
$\tilde{\mu}: (A\oplus V)^{\otimes 2}\longrightarrow A\oplus V,~$
$\tilde{\mu}(a+m,b+n):=ab+a\cdot n+m\cdot b$  and
$\tilde{\alpha}, \tilde{\beta}: A\oplus V\longrightarrow A\oplus V,$ $\tilde{\alpha}(a+m):=\alpha(a)+\phi(m)$  and $\tilde{\beta}(a+m):=\beta(a)+\psi(m)$
\end{thm}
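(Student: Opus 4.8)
The strategy is to verify directly that the quadruple $(A\oplus V,\tilde\mu,\tilde\alpha,\tilde\beta)$ satisfies the two defining axioms of a BiHom-Jordan algebra from Definition~\ref{BiHom jordan}: BiHom-commutativity of $\tilde\mu$ and the BiHom-Jordan identity for $as_{\tilde\alpha,\tilde\beta}$. Throughout I will use the obvious identifications, writing a generic element of $A\oplus V$ as $a+m$ with $a\in A$, $m\in V$, and I will expand everything in terms of $\mu$, $\rho_l$, $\rho_r$, $\alpha$, $\beta$, $\phi$, $\psi$, collecting separately the ``$A$-component'' and the ``$V$-component'' of each expression. The point is that the $A$-component of any identity reduces to the corresponding identity in $(A,\mu,\alpha,\beta)$ (which holds since it is a BiHom-Jordan algebra), while the $V$-component splits into pieces that are handled by the relations \eqref{r3}, \eqref{r4}, \eqref{r5} defining a BiHom-Jordan bimodule.

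First I would record the preliminary facts: $\tilde\alpha\tilde\beta=\tilde\beta\tilde\alpha$ follows from $\alpha\beta=\beta\alpha$ and $\phi\psi=\psi\phi$; multiplicativity $\tilde\alpha\tilde\mu=\tilde\mu\tilde\alpha^{\otimes2}$ and $\tilde\beta\tilde\mu=\tilde\mu\tilde\beta^{\otimes2}$ follows from multiplicativity of $\alpha,\beta$ on $A$ together with the facts that $\rho_l,\rho_r$ are morphisms of BiHom-modules (i.e.\ $\phi\rho_l=\rho_l(\alpha\otimes\phi)$, $\psi\rho_l=\rho_l(\beta\otimes\psi)$, $\phi\rho_r=\rho_r(\phi\otimes\alpha)$, $\psi\rho_r=\rho_r(\psi\otimes\beta)$). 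Next, the BiHom-commutativity $\tilde\mu(\tilde\beta(a+m),\tilde\alpha(b+n))=\tilde\mu(\tilde\beta(b+n),\tilde\alpha(a+m))$: the $A$-component is $\mu(\beta(a),\alpha(b))=\mu(\beta(b),\alpha(a))$, true in $A$; the $V$-component reads $\rho_l(\beta(a),\phi(n))+\rho_r(\psi(m),\alpha(b))=\rho_l(\beta(b),\phi(m))+\rho_r(\psi(n),\alpha(a))$, and the mixed terms match exactly by applying \eqref{r3} (which says $\rho_l(\beta\otimes\phi)=\rho_r(\psi\otimes\alpha)\tau_1$) to each side.

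For the BiHom-Jordan identity I would compute $as_{\tilde\alpha,\tilde\beta}\big(\tilde\mu(\tilde\beta^2(u),\tilde\alpha\tilde\beta(u)),\tilde\alpha^2\tilde\beta(w),\tilde\alpha^3(u)\big)$ for $u=x+v$, $w=y+v'$ (and then invoke the cyclic-permutation form from the Remark to reduce to checking it for all $x,w,z$; in fact it is cleanest to work directly with the cyclic form $\circlearrowleft_{x,w,z}as_{\tilde\alpha,\tilde\beta}(\tilde\mu(\tilde\beta^2(x),\tilde\alpha\tilde\beta(w)),\tilde\alpha^2\tilde\beta(y),\tilde\alpha^3(z))=0$). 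Expanding $\tilde\mu$ and $as_{\tilde\alpha,\tilde\beta}$, the $A$-component is precisely the BiHom-Jordan identity of $(A,\mu,\alpha,\beta)$, hence zero. The $V$-component is a sum of terms each linear in exactly one of the ``$V$-slots'': the terms where the $V$-entry sits in the innermost product $\tilde\mu(\tilde\beta^2(x),\tilde\alpha\tilde\beta(w))$ reorganize (after applying the module-morphism relations to push $\phi,\psi$ through) into the left-hand expression of \eqref{r4} and vanish cyclically; the terms where the $V$-entry is in the $\tilde\alpha^2\tilde\beta(y)$ slot give, after the same bookkeeping, an instance of \eqref{r4} again; and the terms where the $V$-entry is in the outer $\tilde\alpha^3(z)$ slot collect into \eqref{r5}.

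The main obstacle is purely organizational: correctly tracking which twisting maps ($\alpha^i\beta^j$ on $A$, $\phi^i\psi^j$ on $V$) decorate each factor after fully expanding a triple product of BiHom-associators, and matching the resulting $V$-component term-by-term against the somewhat elaborate expressions \eqref{r4} and \eqref{r5}. This is where a sign or an exponent is easy to mismatch; the safeguard is to use multiplicativity and the four module-morphism identities systematically to normalize every term to the form appearing in \eqref{r4}–\eqref{r5} before comparing, exactly as was done in the proof of Theorem~\ref{ModPlus}. Once the $V$-component is shown to be a cyclic sum of \eqref{r4} plus \eqref{r5}, both of which vanish by hypothesis, the BiHom-Jordan identity for $A\oplus V$ follows, and we conclude that $(A\oplus V,\tilde\mu,\tilde\alpha,\tilde\beta)$ is a BiHom-Jordan algebra.
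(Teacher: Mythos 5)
Your overall skeleton is the paper's: check $\tilde\alpha\tilde\beta=\tilde\beta\tilde\alpha$ and multiplicativity from the module-morphism identities, get BiHom-commutativity of $\tilde\mu$ from that of $\mu$ together with (\ref{r3}), then expand the BiHom-Jordan identity and split into an $A$-component (killed by the BiHom-Jordan identity of $A$) and a $V$-component (to be killed by the bimodule axioms). The gap is in your assignment of the $V$-terms to the axioms (\ref{r4}) and (\ref{r5}). When the $V$-entry sits inside the inner product $\tilde\mu(\tilde\beta^2(\cdot),\tilde\alpha\tilde\beta(\cdot))$, the resulting contributions are module associators of type $V\otimes A\otimes A$, namely $as_{V_{\phi,\psi}}(\beta^2(x)\cdot\phi\psi(n),\alpha^2\beta(y),\alpha^3(z))$ and $as_{V_{\phi,\psi}}(\psi^2(m)\cdot\alpha\beta(w),\alpha^2\beta(y),\alpha^3(z))$; these cannot ``reorganize into the left-hand expression of (\ref{r4})'', since (\ref{r4}) has its $V$-entry in the \emph{middle} slot (type $A\otimes V\otimes A$), and no amount of pushing $\phi,\psi$ through with the module-morphism identities changes the slot in which the $V$-entry sits. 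Likewise the terms with the $V$-entry in the outer slot, $as_{V_{\phi,\psi}}(\mu(\beta^2(x),\alpha\beta(w)),\alpha^2\beta(y),\phi^3(m))$, of type $A\otimes A\otimes V$, do not vanish by (\ref{r5}) on their own: (\ref{r5}) is a mixed relation that couples two $V\otimes A\otimes A$ associators with one $A\otimes A\otimes V$ associator, so it only kills the outer-slot term \emph{in combination with} the inner-product terms.

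The correct bookkeeping (which is what the paper does, plugging $x+m$ into the first and third slots and $y+n$ into the middle) is: only the middle-slot term $as_{V_{\phi,\psi}}(\mu(\beta^2(x),\alpha\beta(x)),\phi^2\psi(n),\alpha^3(x))$ is an instance of (\ref{r4}) (its cyclic sum degenerates to three copies of one term, so that term vanishes in characteristic $0$); the two inner-product terms and the outer-slot term must be grouped together, and before they match the left-hand side of (\ref{r5}) specialized at $z=x$ you must invoke (\ref{r3}) --- not merely the module-morphism identities --- to rewrite $\beta^2(x)\cdot\phi\psi(m)$ as $\psi^2(m)\cdot\alpha\beta(x)$. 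With that regrouping the $V$-component is exactly ``(\ref{r4}) plus (\ref{r5})'' and the proof closes; as stated, your matching step would fail.
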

\begin{proof} First, the BiHom-commutativity of $\tilde{\mu}$ follows from the one of $\mu$ and the condition (\ref{r3}).
Next, the multiplicativity of $\tilde{\alpha}$ and $\tilde{\beta}$ with respect to $\tilde{\mu}$ follows from those of $\alpha$ and
$\beta$ with respect to $\mu$ and the fact that $\rho_l$ and $\rho_r$ are morphisms of BiHom-modules.
Finally, we prove the BiHom-Jordan identity for $E=A\oplus V$ as it follows
\begin{eqnarray}
&&as_{\tilde{\alpha},\tilde{\beta}}(\tilde{\mu}(\tilde{\beta}^2(x+m),\tilde{\alpha}\tilde{\beta}(x+m)),\tilde{\alpha}^2\tilde{\beta}(y+n),\tilde{\alpha}^2(x+m))\nonumber\\
&=&\tilde{\mu}(\tilde{ \mu}(\tilde{\mu}(\tilde{\beta}^2(x+m),\tilde{\alpha}\tilde{\beta}(x+m)),\tilde{\alpha}^2\bar{\beta}(y+n)),\tilde{\beta}\tilde{\alpha}^3(x+m))\nonumber\\
&&-\tilde{\mu}(\tilde{\alpha}\tilde{\mu}(\tilde{\beta}^2(x+m),\tilde{\alpha}\tilde{\beta}(x+m)),\tilde{\mu}(\tilde{\alpha}^2\tilde{\beta}(y+n),\tilde{\alpha}^3(x+m)))\nonumber\\
&=&\tilde{\mu}\Big[\tilde{\mu}(\beta^2(x)\alpha\beta(x)+\beta^2(x)\cdot\phi\psi(m)+\psi^2(m)\cdot\alpha
\beta(x),\alpha^2\beta(y)+\phi^2\psi(n)),\beta\alpha^3(x)+\psi\phi^3(m)\Big]
\nonumber\\
&&-\tilde{\mu}\Big[\tilde{\alpha}(\beta^2(x)\alpha\beta(x)+\beta^2(x)\cdot\phi\psi(m)+\psi^2(m)\cdot
\alpha\beta(x)),\alpha^2\beta(y)\alpha^3(x)+\alpha^2\beta(y)\cdot\phi^3(m)\nonumber\\&&
+\phi^2\psi(n)\cdot\alpha^3(x)\Big]\nonumber\\
&=&\tilde{\mu}\Big[(\beta^2(x)\alpha\beta(x))\alpha^2\beta(y)+\beta^2(x)\alpha\beta(x)
\cdot\phi^2\psi(n)+(\beta^2(x)\cdot\phi\psi(m))\cdot\alpha^2\beta(y)\nonumber\\&&+
(\psi^2(m)\cdot\alpha\beta(x))\cdot\alpha^2\beta(y),\beta\alpha^3(x)+\psi\phi^3(m)\Big]
-\tilde{\mu}\Big[\alpha\beta^2(x)\alpha^2\beta(x)+\alpha\beta^2(x)\cdot\phi^2\psi(m)\nonumber\\&&+
\phi\psi^2(m)\cdot\alpha^2\beta(x), \alpha^2\beta(y)\alpha^3(x)+\alpha^2\beta(y)\cdot\phi^3(m)
+\phi^2\psi(n)\cdot\alpha^3(x)\Big]\nonumber\\
&=&[(\beta^2(x)\alpha\beta(x))\alpha^2\beta(y)]\beta\alpha^3(x)+
[(\beta^2(x)\alpha\beta(x))\alpha^2\beta(y)]\cdot\psi\phi^3(m)\nonumber\\&&
+[\beta^2(x)\alpha\beta(x)\cdot\phi^2\psi(n)]\cdot\beta\alpha^3(x)+[(\beta^2(x)\cdot\phi\psi(m))\cdot\alpha^2\beta(y)]\cdot\beta\alpha^3(x)\nonumber\\&&
+[(\psi^2(m)\cdot\alpha\beta(x))\cdot\alpha^2\beta(y)]\cdot\beta\alpha^3(x)
-(\alpha\beta^2(x)\alpha^2\beta(x))\alpha^2\beta(y)\alpha^3(x)\nonumber\\&&-\alpha\beta^2(x)\alpha^2\beta(x)\cdot(\alpha^2\beta(y)\cdot\phi^3(m))
-\alpha\beta^2(x)\alpha^2\beta(x)\cdot(\phi^2\psi(n)\cdot\alpha^3(x))\nonumber\\&&
-(\alpha\beta^2(x)\cdot\phi^2\psi(m))\cdot\alpha^2\beta(y)\alpha^3(x)-(\phi\psi^2(m)\cdot\alpha^2\beta(x))\cdot\alpha^2\beta(y)\alpha^3(x)
\nonumber\\
&=& as_{\alpha,\beta}(\beta^2(x)\alpha\beta(x),\alpha^2\beta(y),\alpha^3(x))+
as_{\phi,\psi}(\beta^2(x)\alpha\beta(x),\alpha^2\beta(y),\phi^3(m))\nonumber\\&&
+as_{\phi,\psi}(\beta^2(x)\alpha\beta(x),\phi^2\psi(n),\alpha^3(x))
+as_{\phi,\psi}(\beta^2(x)\cdot\phi\psi(m),\alpha^2\beta(y),\alpha^3(x))\nonumber\\
&&+as_{\phi,\psi}(\psi^2(m)\cdot\alpha\beta(x),\alpha^2\beta(y),\alpha^3(x))\nonumber\\&=& as_{\alpha,\beta}(\beta^2(x)\alpha\beta(x),\alpha^2\beta(y),\alpha^3(x))+
as_{\phi,\psi}(\beta^2(x)\alpha\beta(x),\phi^2\psi(n),\alpha^3(x))
\nonumber\\
&&+\{ as_{\phi,\psi}(\psi^2(m)\cdot\alpha\beta(x),\alpha^2\beta(y),\alpha^3(x))+as_{\phi,\psi}(\psi^2(m)\cdot\alpha\beta(x),\alpha^2\beta(y),\alpha^3(x))\nonumber
\end{eqnarray}
\begin{eqnarray}
&&+as_{\phi,\psi}(\beta^2(x)\alpha\beta(x),\alpha^2\beta(y),\phi^3(m))\} \mbox{  ( \textsl{rearranging terms and using (\ref{r3} )} )}\nonumber\\
&=&0 \mbox{ \textsl{(using respectively the BiHom-Jordan identity, (\ref{r4}) and (\ref{r5})  ))}}\nonumber
\end{eqnarray}
  We conclude then that $(A\oplus V,\tilde{\mu},\tilde{\alpha}, \tilde{\beta})$ is a BiHom-Jordan algebra.\end{proof}

 Similarly as BiHom-alternative algebra case, let give the following:
 \begin{rmk} Consider the  split null extension $A\oplus V$ determined by the BiHom-Jordan bimodule $(V,\phi,\psi)$ for the BiHom-Jordan algebra
$(A,\mu,\alpha,\beta)$ in the previous theorem. Write  elements $a+v$ of $A\oplus V$ as $(a,v).$ Then there is an injective homomorphism of BiHom-modules
$i :V\rightarrow A\oplus V $ given by $i(v)=(0,v)$ and a surjective homomorphism of BiHom-modules $\pi : A\oplus V\rightarrow A$ given by $\pi(a,v)=a.$
Moreover, $i(V)$ is a two-sided BiHom-ideal of $A\oplus V$  such that $A\oplus V/i(V)\cong A$. On the other hand, there is a morphism of BiHom-algebras
$\sigma: A\rightarrow A\oplus V$ given by $\sigma(a)=(a,0)$ which is clearly a section of $\pi.$ Hence, we obtain the abelian split exact sequence of
BiHom-Jordan algebras and $(V, \alpha_V,\beta_{V})$ is a BiHom-Jordan bimodule for $A$ via $\pi.$
 \end{rmk}

\end{document}